\newcommand{\bea}{\begin{eqnarray}}
\newcommand{\eea}{\end{eqnarray}}
\def\beq#1#2\eeq{
        \begin{equation}
        \label{#1}
            #2
        \end{equation}}
\newcommand{\bt}{\beta}
\renewcommand{\hat}{\widehat}
\def\btheor#1\etheor{
        \begin{theor}
            #1
        \end{theor}
    }
    \def\bsled#1\esled{
        \begin{sled}
            #1
        \end{sled}   }
\newtheorem{theorem}{Theorem}
\newtheorem{lemma}{Lemma}
\newtheorem{cor}{Corollary}
\def\hm#1{#1\nobreak\discretionary{}{\hbox{\m@th$#1$}}{}}
\def\mi#1{\discretionary{\hbox{\m@th$#1$}}{\hbox{\m@th$#1$}}{}}
\begin{document}
\title{\bf Perturbed Hankel determinant, correlation functions and Painlev\'{e} equations}
\author{Min Chen$^{1}$\thanks{chenminfst@gmail.com}, Yang Chen$^{1}$\thanks{Corresponding author(Yang Chen), yayangchen@umac.mo and yangbrookchen@yahoo.co.uk}  and  Engui Fan$^{2}$\thanks{faneg@fudan.edu.cn}\\
        {\normalsize $^{1}$Department of Mathematics, University of Macau, Avenida da Universidade,}\\
        {\normalsize Taipa, Macau, P.R. China}\\
        {\normalsize $^{2}$School of Mathematical Science, Fudan University, Shanghai 200433, P.R. China}\\
        }
\date{}

\maketitle
\begin{abstract}
We continue with the study of the Hankel determinant,
$$
D_{n}(t,\alpha,\beta):=\det\left(\int_{0}^{1}x^{j+k}w(x;t,\alpha,\beta)dx\right)_{j,k=0 }^{n-1},
$$
generated by a Pollaczek-Jacobi type weight,
$$
w(x;t,\alpha,\beta):=x^{\alpha}(1-x)^{\beta}{\rm e}^{-t/x},
\quad x\in [0,1], \quad \alpha>0, \quad \beta>0, \quad t\geq 0.
$$
This reduces to the ``pure"  Jacobi weight at $t=0.$ We may take $\alpha\in \mathbb{R}$, in the situation while $t$ is strictly greater than $0.$ It was shown in Chen and Dai (2010), that the logarithmic
derivative of this Hankel determinant satisfies a Jimbo-Miwa-Okamoto $\sigma$-form of Painlev\'e \uppercase\expandafter{\romannumeral5} (${\rm P_{\uppercase\expandafter{\romannumeral5}}}$). In fact the logarithmic of the Hankel determinant has an integral representation in terms of a particular ${\rm P_{\uppercase\expandafter{\romannumeral5}}}.$
\\
In this paper, we show that, under a double scaling, where $n$ the dimension of the Hankel matrix tends to $\infty$, and $t$ tends to $0^{+},$ such that $s:=2n^2t$ is finite, the double scaled Hankel determinant (effectively an operator determinant) has an integral representation in terms of a particular ${\rm P_{\uppercase\expandafter{\romannumeral3}'}}.$ Expansions of the scaled Hankel determinant for small and large $s$ are found. A further double scaling with $\alpha=-2n+\lambda,$ where $n\rightarrow \infty$ and $t,$ tends to $0^{+},$ such that
$s:=nt$ is finite. In this situation the scaled Hankel determinant has an integral representation in terms of a particular ${\rm P_{\uppercase\expandafter{\romannumeral5}}},$
and its small and large $s$ asymptotic expansions are also found.
\\
The reproducing kernel in terms of monic polynomials orthogonal with respect to the Pollaczek-Jacobi type weight, under the origin (or hard edge) scaling may be expressed
in terms of the solutions of a second order linear ordinary differential equation (ODE). With special choices of the parameters, the limiting (double scaled) kernel and the second order ODE degenerate to Bessel kernel and the Bessel differential equation, respectively.
\\
We also applied this method to polynomials orthogonal with respect to the perturbed Laguerre weight;
$
w(x;t,\alpha):=x^{\alpha}{\rm e}^{-x}\:{\rm e}^{-t/x},
$
$
0\leq x<\infty,\;\;\;\alpha>0,\;\;t>0.
$
The scaled kernel at origin of this perturbed Laguerre ensemble has the same behavior with the above limiting kernel, although difference scaled schemes are adopted on these two kernels.
\end{abstract}
\vfill\eject

\setcounter{equation}{0}
\section{Introduction}
The determinant of the $n\times n$ Hankel matrix,
$$
\left(\int_{\mathbb{L}}w(x)x^{j+k}dx\right)_{0\leq j,k\leq n-1},
$$
has an equivalent representation as the multiple integral \cite{M2004},
\begin{equation}\label{00a2}
D_{n}[w]=\frac{1}{n!}\int_{\mathbb{L}^{n}}\prod_{1\leq j<k\leq n}\left(x_{j}-x_{k}\right)^{2}\prod_{\ell=1}^{n}w(x_{\ell})dx_{\ell}, \quad {\rm with} \quad w(x_{\ell})=e^{-{\rm v}(x_{\ell})},
\end{equation}
where $w(x)$ is a positive weight function supported on $\mathbb{L}$ ($\subset\mathbb{R}$) and  ${\rm v}(x)$ is known as the external potential. Hankel determinant a fundamental object unitary random matrix theory \cite{M2004} with many applications, in mathematics, physics and other areas, for example, in wireless communications \cite{BCL2009}.
For instance, such determinant that arises from the singularly perturbed Laguerre weight, becomes the moment generating function of certain linear statistics \cite{ChenIts12010}. Such
determinant also appears in the computation of the Wigner delay time distribution in chaotic cavities, studied from the point of view of
large derivations\cite{TM2013}. We refer the Reader to \cite{B1997, BCW2001, CC2014} for related material.

Here are some well-known facts on orthogonal polynomials.
\\
The joint probability density function of the eigenvalues $x_{1}, x_{2}, \ldots, x_{n}$ of a $n\times{n}$ Hermitian matrix ensemble can found, for example, in \cite{M2004},
$$
p(x_{1},x_{2},\ldots,x_{n})=\frac{1}{D_{n}[w]}\frac{1}{n!}\prod_{1\leq j<k\leq n}(x_{j}-x_{k})^{2}\prod_{\ell=1}^{n}w(x_{\ell}),
$$
\\
From which, he $m$-point correlation function, follows, see \cite{D1970,M2004};
$$
R_{m}(x_{1},x_{2},\ldots,x_{m})=\frac{n!}{(n-m)!}\int_{\mathbb{L}^{n-m}}p(x_{1},x_{2},\ldots,x_{n})dx_{m+1}\cdots{dx_{n}}.
$$
An equivalent expression reads,
$$
R_{m}(x_{1},x_{2},\ldots,x_{m})=\det\left(K_{n}(x_{k},x_{j})\right)_{1\leq{k},j\leq{m}}.
$$
Here kernel $K_{n}(x,y)$ is defined by the monic polynomials $P_{n}(x)$ orthogonal with respect to the weight $w(x)$ on $\mathbb{L}$,
\begin{equation}\label{451}
K_{n}(x,y):=\sqrt{w(x)}\sqrt{w(y)}\sum_{j=0}^{n-1}\frac{P_{j}(x)P_{j}(y)}{h_{j}},
\end{equation}
\begin{equation}\label{452}
\int_{\mathbb{L}}P_{n}(x)P_{m}(x)w(x)dx=h_{n}\delta_{nm},
\end{equation}
and $h_{n}$ is the square of the $L^{2}$ norm. An immediate consequence of the orthogonality relations is the reproducing property;
$$
K_{n}(x,y)=\int_{\mathbb{L}}K_n(x,z)K_n(z,y)dz.
$$
Further more, the monic orthogonal polynomials satisfy three terms recurrence relations,
$$
xP_{n}(x)=P_{n+1}(x)+\alpha_{n}P_{n}(x)+\beta_{n}P_{n-1}(x),
$$
subjected to the initial data, $P_{0}(x)=1$, and $\beta_{0}P_{-1}(x)=0$.
\\
With the aid of the Christoffel--Darboux formula \cite{Szego1939}, which is an immediate consequence of the three term recurrence relations, the kernel has a simple closed form
\begin{equation}\label{00a1}
K_{n}(x,y)=\sqrt{w(x)}\sqrt{w(y)}\frac{P_{n}(x)P_{n-1}(y)-P_{n}(y)P_{n-1}(x)}{h_{n-1}(x-y)}.
\end{equation}
It is of interest to investigate the feature of local eigenvalue correlation through a description of the correlation kernel, for $n$. For example,
in the case of the Gaussian Unitary Ensemble (GUE), where $w(x)={\rm e}^{-x^2},\;\;x\in \mathbb{R}$ the limiting mean eigenvalue density reads,
$$
\lim_{n\rightarrow\infty}\sqrt{\frac{2}{n}}R_{1}\left(\sqrt{2n}x\right)=\frac{2\sqrt{1-x^{2}}}{\pi},\quad -1<x<1.
$$
This is the Wigner semi-circle law. Re-scaling with respect to a fix point $x_{0},$  leads to the sine kernel
$$
\lim_{n\rightarrow\infty}\frac{\pi}{\sqrt{2n}}K_{n}\left(x_{0}+\frac{\pi{x}}{\sqrt{2n}},x_{0}+\frac{\pi{y}}{\sqrt{2n}}\right)=\frac{\sin\pi(x-y)}{\pi(x-y)}.
$$
The limiting kernel becomes the Airy kernel with a suitable re-scaling at the edge $\sqrt{2n},$ the edge of the eigenvalue spectrum, obtained by Tracy and Widom \cite{TW1993AK},
$$
K_{\rm{Airy}}(x,y)=\lim_{n\rightarrow\infty}\frac{1}{2^{\frac{1}{2}}n^{\frac{1}{6}}}K_{n}\left(\sqrt{2n}+\frac{x}{2^{\frac{1}{2}}n^{\frac{1}{6}}},
\sqrt{2n}+\frac{y}{2^{\frac{1}{2}}n^{\frac{1}{6}}}\right)=\frac{Ai(x)Ai'(y)-Ai'(x)Ai(y)}{x-y}.
$$
Here $Ai(z)$ is the Airy function. Tracy and Widom investigated the logarithmic derivatives of operator determinants, involving the Airy kernel,
in the study of the level spacing distribution \cite{TW1993AK}. Chen and Ismail \cite{YMI1997}, obtained the Airy kernel by studying the limiting behavior of kernels
generated in the situations, where $\textsf{v}(x)$ is any even degree polynomials in $x$ with positive coefficient in the highest order monomial.
\\
It is a useful technique to characterize the large $n$ behavior of the scaled kernel in terms of differential equation. Tracy and widom \cite{TW1994BK} adopted tools from integrable system to analyze the correlation kernel in the large $n$ limit, and scale at hard edge of the Laguerre unitary ensemble (LUE). The limiting kernel can be expressed by the regular solutions of the Bessel differential equation. We restate the limiting kernel here, 
 \cite{TW1994BK}, 
\begin{equation}\label{b0b1}
K_{\rm Bessel}(x,y)=\frac{\phi(x)y\phi'(y)-x\phi'(x)\phi(y)}{x-y},
\end{equation}
where $\phi(x)$ is the regular solutions of the Bessel differential equation ((2.14), \cite{TW1994BK}),
\begin{equation}\label{b0b2}
x^{2}\phi''(x)+x\phi'(x)+\frac{1}{4}(x-\alpha^{2})\phi(x)=0,
\end{equation}
namely $\phi(x)=\sqrt{\mu}J_{\alpha}(\sqrt{x}),$ $\mu$ is a parameter, and $J_{\alpha}(z)$ is the Bessel function with order $\alpha,$
and
\begin{align}\label{00a}
K_{\rm Bessel}\left(x,y\right)&=\mu\frac{J_{\alpha}(\sqrt{x})\sqrt{y}J'_{\alpha}(\sqrt{y})-\sqrt{x}J'_{\alpha}(\sqrt{x})J_{\alpha}(\sqrt{y})}{2(x-y)},\\
K_{\rm Bessel}(x,x)&=\frac{\mu}{4}\left(J_{\alpha}(\sqrt{x})^{2}-J_{\alpha+1}(\sqrt{x})J_{\alpha-1}(\sqrt{x})\right).
\end{align}
This is known as the Bessel kernel. See \cite{TW1994FD} for further information.
\\
From the formulas of Laguerre and Hermite polynomial, Forrester \cite{F1993} obtains the Bessel kernel and the Airy kernel after suitable re-scaling. Moreover, Nagao and Wadati \cite{TW1991} deduced the Bessel kernel by scaling the Jacobi ensemble at the hard edges, $\pm1$. Kuijlaars and Zhang \cite{KZ2014} obtain a limiting kernel as a generalization of Bessel kernel by scaling the correlation kernel of complex Ginibre random matrices at the hard edge, see the references therein for more information.
\\
The theory of integrable kernels was put forward in \cite{IIKS1990}. A condition for a kernel $K(x,y)$ to be integrable, is that it can be expressed
 as the sum of functions  $f_{k}(x)$ and $g_{k}(x)$, that is,
$$
K(x,y)=\frac{1}{x-y}\sum_{k=0}^{p}f_{k}(x)g_{k}(y),{\rm \;\;\; where\;\;} \quad\sum_{k=0}^{p}f_{k}(x)g_{k}(x)=0.
$$
The Sine, Airy and Bessel kernel are all integrable.
\\
In our approach, we study the Hankel determinant directly, without expressing it in the form of $\det(I_n-K_n).$
\\
For the problem at hand, the Hankel determinant reads,
$$
D_{n}(t,\alpha,\beta):={\rm det}\left(\int_{0}^{1}x^{j+k}w(x;t,\alpha,\beta)dx\right)_{j,k=0}^{n-1},
$$
where
\begin{equation}\label{a0a1}
w(x;t,\alpha,\beta)=x^{\alpha}(1-x)^{\beta}e^{-t/x},\quad x\in[0,1],\quad t \geq 0, \quad \beta >0, \quad \alpha>0,
\end{equation}
is the Pollaczek-Jocobi type weight.

For $t>0,$  ${\rm e}^{-t/x}\rightarrow 0,$  as $x\rightarrow  0,$  with far greater speed than $x^{\alpha}$ tends to 0, if $\alpha>0.$
The same can be said, for $\alpha<0,$ as long as $t>0.$
The Pollaczek-Jacobi type  weight violates the Szeg\"{o} condition (see \cite{Szego1939} and \cite{CD2010}),
which reads,
$$
\int_{0}^{1}\frac{|\ln w(x)|}{\sqrt{x(1-x)}}dx<\infty.
$$
\\
Any monic  polynomial orthogonal with respect to some weight can be represented by Heine's multiple-integral, and in our case,
$$
P_{n}(z;t,\alpha,\beta)=\frac{1}{n!D_{n}(t,\alpha,\beta)}\int_{(0,1)^{n}}\prod_{m=1}^{n}(z-x_{m})\prod_{1\leq j<k\leq n}(x_{j}-x_{k})^{2}\prod_{\ell=1}^{n}w(x_{\ell};t,\alpha,\beta)dx_{\ell}.
$$
The constant term of our orthogonal polynomial has the closed form expression,
$$
(-1)^{n}P_{n}(0;t,\alpha,\beta)=\frac{D_{n}(t,\alpha+1,\beta)}{D_{n}(t,\alpha,\beta)}.
$$
The remainder of this paper is organized as follow. In Section 2.1, our interest lies in a double scaling analysis, where $t\rightarrow 0^{+},$ and $n\rightarrow \infty$, such that $s=2n^{2}t$ is finite. we shall see later that that double-scaled and in some sense infinite dimensional Hankel determinant has an integral representation in terms of a particular
${\rm P_{{III}^{'}}}.$ Its logarithmic derivative satisfies a particular Jimbo-Miwa-Okamoto $\sigma$-form Painlev\'{e}. This double scaling analysis is based on results of Chen and Dai \cite{CD2010}, obtained in the finite $n$ situation. In Section 2.2, we obtained the asymptotic expansions of scaled  Hankel determinant as formal series for small and large $s$. Furthermore, the constant in the large $s$ expansion is found later, in Section 2.4. In Section 2.3, we introduce a new double scaling scheme, where
 $t\rightarrow 0^{+},$ $n\rightarrow \infty,$ $\beta=\widetilde{\beta}n\rightarrow \infty,$ with ${\it fixed}\;\widetilde{\beta}$ and $s=2(1+\widetilde{\beta})n^{2}t$ is finite. In this
  case the infinite dimensional determinant may be  characterized by the same Painlev\'{e} equations that appeared theorem 3, after a minor change of variable. In Section 2.4,
 an evaluation is made on the constant term of the monic orthogonal polynomials$P_{n}(0;t,\alpha,\beta)$, for large $n$, and $s=2n^{2}t$. This comes from an application of the Szeg\"o limit theorem for Toeplitz determinants, but adapted to Hankel determinants.
 From these results, the constant $c(\alpha)$ appears in the asymptotic expansion of the double-scaled Hankel determinant, for large $s$ is found. In Section 2.5, we combine Normand's formulas \cite{JMN2004} with method of Forrester and Witte \cite{PFW2006}, to find asymptotic expansions of the Hankel determinant for small $s,$ in agreement with
 our expansion in Section 2.2.
\\
In Section 3, we propose another double scaling scenario, namely, $\alpha=-2n+\lambda,$
\\
 $t\rightarrow0^{+},$ $n\rightarrow \infty,$ where $\lambda<0.$  In such a way that
 $s=nt>0$ is finite. We note that our Hankel determinant under this double scaling scheme can be reproduced by another perturbed Laguerre weight, namely,
  $$
  w(x;t)=(x+t)^{-(\lambda+\beta)}x^{\beta}\:{\rm e}^{-x-t},\;\;\lambda<0,\;\beta>0,\;t \geq 0,\;\;\;x\in(0,\infty),
  $$
  studied in \cite{YM2012}.
  In this case, the Hankel determinant has an integral representation in terms of a particular ${\rm P_{V}}$, (equivalent to a ${\rm P_{III}}$). From which we determine its  small and large $s$ expansion. Moreover, the constant which appear in the asymptotic expansion of the double-scaled Hankel determinant, for large s, is found.
\\
In Section 4, combining the ladder operator relations in $x$ obtained in \cite{CD2010} and further ladder operator relations in $t$ obtained here,
 satisfied by the monic orthogonal polynomials $P_{n}(x;t,\alpha,\beta)$
(see theorem 12),
we construct a Lax pair, involving derivative in $x$ and derivative in $t$.  The natural compatibility condition, reproduces certain results of \cite{CD2010}. In order to analyse
the limiting behavior of the kernel arising form the Pollaczek-Jacobi type weight, a scaling scheme is introduced. Here, $t\rightarrow 0^{+},$ $n\to\infty,$ such that
$s=2n^2t$ is finite. The ``coordinates", $x$ and $y$ have been re-scaled to $x=\frac{\zeta}{4n^2},$ $y=\frac{\zeta*}{4n^2}$. Ultimately, this shows that the limiting
kernel may be characterized by solutions a
second order ODE. If $s=0$, this essentially reduces to the Jacobi weight, and the limit kernel and the second order ODE reduce to the Bessel kernel and Bessel differential equation, respectively.
\\
In Section 5, we adopt the method in Section 4 to study the kernel arising form  the singularly perturbed Laguerre weight. We adopt another scaling scheme, where
$t\to 0^+$, $n\to\infty,$ such that $s=(2n+1+\alpha)t$ is finite. The ``coordinates", $x$ and $y$  have been re-scaled to $x=\frac{\zeta}{4n}$ and $y=\frac{\zeta^*}{4n}.$

\section{Double scaling analysis \uppercase\expandafter{\romannumeral1}.}
Chen and Dai \cite{CD2010} applied the ladder operator method to investigate the Hankel determinant obtained from the Pollacaek-Jacobi type weight. It was found that the logarithmic derivative of the Hankel determinant satisfies a particular Jimbo-Miwa-Okamoto $\sigma$-form of Painlev\'{e} equation.
\\
An immediate consequence of the relationships obtained in \cite{CD2010}, is that the  Hankel determinant has an
 integral representation in terms of a ${\rm P_{V}}$ transcendent in the variable $y(t,\alpha,\beta)$. See Lemma 1.
  We shall be concerned with the behavior of the
 Hankel determinant, as $n$, the dimension of the Hankel matrix tends to infinity. For this purpose, a double scaling scheme is introduced, namely, sending $n\rightarrow\infty,$ $t\rightarrow0^{+},$ and such that $s:=2n^{2}t$ remain fixed.  We recall  theorem 5.4 in \cite{CD2010}.
\begin{theorem}
The logarithmic derivative of the Hankel determinant with respect to $t$,
\bea\label{b42}
H_{n}(t,\alpha,\beta):=t\frac{d}{dt}\ln{\frac{D_{n}(t,\alpha,\beta)}{D_{n}(0,\alpha,\beta)}}=(2n+\alpha+\beta)(r_n^{*}(t)-r_n(t)),
\eea
satisfies the following ordinary differential equation:
\bea\label{b2}
(tH_{n}^{''})^{2}=[n(n+\alpha+\beta)-H_{n}+(\alpha+t)H_{n}^{'}]^{2}+4H_{n}^{'}(tH_{n}^{'}-H_{n})(\beta-H_{n}^{'}),
\eea
with the initial data $H_{n}(0,\alpha,\beta)=0.$ 
\end{theorem}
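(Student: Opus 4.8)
The plan is to derive the $\sigma$-form by the ladder-operator (Chen--Ismail) method applied to the monic polynomials $P_n(x;t,\alpha,\beta)$ orthogonal with respect to $w=e^{-v}$, $v(x)=-\alpha\ln x-\beta\ln(1-x)+t/x$, coupled to the Toda-type $t$-flow generated by $\partial_t w=-w/x$. First I would write the lowering and raising relations
\[
P_n'(x)=\beta_n A_n(x)P_{n-1}(x)-B_n(x)P_n(x),
\]
with
\[
A_n(x)=\frac{1}{h_n}\int_0^1\frac{v'(x)-v'(y)}{x-y}P_n^2(y)w(y)\,dy,\qquad
B_n(x)=\frac{1}{h_{n-1}}\int_0^1\frac{v'(x)-v'(y)}{x-y}P_n(y)P_{n-1}(y)w(y)\,dy.
\]
Since $v'(x)=-\alpha/x+\beta/(1-x)-t/x^2$, the divided difference is a rational kernel with a double pole at $0$ and a simple pole at $1$, so $A_n$ and $B_n$ are rational in $x$ whose residues are the auxiliary quantities $R_n=\frac1{h_n}\int_0^1 y^{-1}P_n^2 w$, $R_n^*=\frac1{h_n}\int_0^1 y^{-2}P_n^2 w$, $r_n=\frac1{h_{n-1}}\int_0^1 y^{-1}P_nP_{n-1}w$, $r_n^*=\frac1{h_{n-1}}\int_0^1 y^{-2}P_nP_{n-1}w$, together with their analogues at the endpoint $x=1$.

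Next I would impose the three compatibility conditions $(S_1)$, $(S_2)$ and the sum rule $(S_2')$ satisfied by $A_n,B_n$. Matching coefficients of the partial fractions $x^{-2}$, $x^{-1}$ and $(1-x)^{-1}$ produces a closed system of nonlinear difference relations among $R_n,R_n^*,r_n,r_n^*$, the recurrence coefficients $\alpha_n,\beta_n$ and the parameters $\alpha,\beta$; crucially, the sum rule $(S_2')$ converts the partial sums $\sum_{j<n}(\cdot)$ that naturally appear into purely local ($n$-indexed) data. I would then bring in the $t$-dependence: from $\partial_t w=-w/x$ one gets $\frac{d}{dt}\ln h_n=-R_n$ and Toda-like equations for $\alpha_n,\beta_n$, whence $\frac{d}{dt}\ln D_n=-\sum_{j=0}^{n-1}R_j$. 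Applying the sum rule collapses this sum to a local expression, yielding exactly $H_n=t\frac{d}{dt}\ln(D_n/D_n(0))=(2n+\alpha+\beta)(r_n^*-r_n)$.

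Finally I would differentiate $H_n$ once and twice in $t$, using the Toda flow to express $tH_n'$ and $tH_n''$ in terms of the auxiliary quantities, and then invoke the algebraic relations obtained from the compatibility conditions to eliminate every variable except $H_n$, $H_n'$, $H_n''$ and the parameters. The surviving identity is the stated second-order second-degree equation \eqref{b2}; the initial condition $H_n(0,\alpha,\beta)=0$ follows because $e^{-t/x}\to1$ as $t\to0^+$, so $D_n(t)/D_n(0)\to1$ with vanishing $t$-derivative.

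The hard part will be the final elimination. The difference equations only link neighbouring indices $n,n\pm1$, so they must be combined with the Toda $t$-flow to trade index shifts for $t$-derivatives; the bookkeeping required to cancel all the auxiliary quantities and reassemble the result into precisely the two squared groupings $[n(n+\alpha+\beta)-H_n+(\alpha+t)H_n']^2$ and $4H_n'(tH_n'-H_n)(\beta-H_n')$ is the delicate step, and it is here that the specific structure of the Pollaczek--Jacobi weight (the double pole at the origin) enters decisively.
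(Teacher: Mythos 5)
Your proposal is essentially the derivation the paper relies on: the paper does not prove Theorem 1 itself but recalls it as Theorem 5.4 of Chen--Dai \cite{CD2010}, and that work proceeds exactly as you outline --- ladder operators whose rational coefficients $A_n,B_n$ encode the auxiliary quantities $R_n,R_n^*,r_n,r_n^*$, the compatibility conditions $(S_1)$, $(S_2)$, $(S_2')$ to localize the sum $\sum_{j<n}R_j$, the $t$-evolution $\partial_t w=-w/x$ giving $\frac{d}{dt}\ln D_n=-\sum_{j=0}^{n-1}R_j$ and hence $H_n=(2n+\alpha+\beta)(r_n^*-r_n)$, followed by elimination of all auxiliary variables to reach the second-order, second-degree $\sigma$-form \eqref{b2}. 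Your plan is therefore correct and matches the source's approach, up to minor normalization conventions (in \cite{CD2010} the quantities $r_n^*,R_n^*$ carry a factor $t$ and $r_n,R_n$ a factor $\beta$, which is needed for the identity \eqref{b42} to hold verbatim).
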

Here $r_n^{*}(t)$ and $r_n(t)$ variables defined in \cite{CD2010}.
\\
By a change of variable, the above ODE turns out to be a particular Jimbo-Miwa-Okamoto $\sigma$-from of ${\rm P_{V}}$. See \cite{CD2010} for more details.
\\
We now recall the Theorem 7.2 in \cite{CD2010} and replace $S_n(t)$ in that paper by   $y(t,\alpha,\beta).$
\begin{theorem}
Let
\bea\label{b8}
y(t,\alpha,\beta):=\frac{R_{n}(t)}{2n+1+\alpha+\beta}.
\eea
Then $y(t,\alpha,\beta)$ satisfies the following differential equation:
\begin{align}\label{b3}
y''=&\frac{3y-1}{2y(y-1)}(y')^2-\frac{y'}{t}+\frac{(2n+1+\alpha+\beta)^2(y-1)^2y}{2t^2}-\frac{(y-1)^2\beta^2}{2t^2y}
+\frac{\alpha{y}}{t}-\frac{y(y+1)}{2(y-1)},
\end{align}
which is a ${\rm P_{V}\left((2n+1+\alpha+\beta)^{2}/2,-\beta^{2}/2,\alpha,-1/2\right)}.$ The boundary condition is $y(0,\alpha,\beta)=1.$
\end{theorem}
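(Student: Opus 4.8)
The plan is to derive the equation directly from the ladder-operator structure of the monic orthogonal polynomials $P_n(x;t,\alpha,\beta)$, in the spirit of \cite{CD2010}, and then to match the resulting ODE to the canonical Painlev\'e V form. Writing $v(x):=-\ln w(x)=-\alpha\ln x-\beta\ln(1-x)+t/x$, so that $v'(x)=-\alpha/x+\beta/(1-x)-t/x^2$, I would first record the lowering relation
\[
P_n'(x)=\beta_n A_n(x)P_{n-1}(x)-B_n(x)P_n(x),
\]
where
\[
A_n(x)=\frac{1}{h_n}\int_0^1\frac{v'(x)-v'(y)}{x-y}P_n^2(y)\,w(y)\,dy,\qquad
B_n(x)=\frac{1}{h_{n-1}}\int_0^1\frac{v'(x)-v'(y)}{x-y}P_n(y)P_{n-1}(y)\,w(y)\,dy.
\]
Computing the divided difference of each term of $v'$ shows that $A_n$ and $B_n$ are rational in $x$ with poles only at $x=0$ (a double pole, from the $t/x^2$ term) and at $x=1$. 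The coefficients in these partial fractions are exactly the auxiliary quantities $R_n,R_n^{*},r_n,r_n^{*}$; in particular $R_n$ arises from the coefficients of the $x^{-1}$ and $x^{-2}$ poles of $A_n$, and thus carries the entire effect of the essential singularity $\re^{-t/x}$.

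Next I would substitute these explicit $A_n,B_n$ into the standard compatibility (supplementary) conditions $(S_1)$, $(S_2)$ and the sum rule
\[
B_n^2(x)+v'(x)B_n(x)+\sum_{j=0}^{n-1}A_j(x)=\beta_n A_n(x)A_{n-1}(x),\qquad (S_2')
\]
and equate coefficients at the poles $x=0,1$ and at infinity. This produces a closed system of nonlinear difference equations relating $R_n,R_n^{*},r_n,r_n^{*}$ to the recurrence coefficients $\alpha_n,\beta_n$; from it one solves, for instance, for $r_n$, $\alpha_n$ and $\beta_n$ in terms of $R_n$ and its index-shifted neighbours.

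The $t$-dependence is then brought in through $\partial_t w=-w/x$. Differentiating the orthogonality relations \eqref{452} and the three-term recurrence with respect to $t$ yields a Toda-type system in which $t\,\partial_t\ln h_n$, $t\,\partial_t\alpha_n$ and $t\,\partial_t\beta_n$ are expressed through the same moments. Combining these with the difference equations of the previous step gives first-order ODEs in $t$ for the auxiliaries, schematically $tR_n'=F(R_n,r_n,\alpha_n,\beta_n)$. The decisive step, which I expect to be the main obstacle, is the elimination: one uses the algebraic difference relations to remove every index shift $n\pm1$ and every auxiliary other than $R_n$, so that both $tR_n'$ and $t\,(tR_n')'$ are expressed through $R_n$ alone, collapsing the system to a single second-order ODE. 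It is precisely here that the Painlev\'e structure, rather than a merely coupled system, must be made to emerge.

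Finally, substituting $y=R_n/(2n+1+\alpha+\beta)$ should turn that ODE into the displayed equation. Since $\tfrac{1}{2y}+\tfrac{1}{y-1}=\tfrac{3y-1}{2y(y-1)}$, matching the six coefficients against the canonical form
\[
y''=\left(\frac{1}{2y}+\frac{1}{y-1}\right)(y')^2-\frac{y'}{t}+\frac{(y-1)^2}{t^2}\left(Ay+\frac{B}{y}\right)+C\,\frac{y}{t}+D\,\frac{y(y+1)}{y-1}
\]
reads off $(A,B,C,D)=\left((2n+1+\alpha+\beta)^2/2,\,-\beta^2/2,\,\alpha,\,-1/2\right)$, i.e.\ the asserted ${\rm P_V}$. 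For the boundary value, letting $t\to0^{+}$ reduces $w$ to the pure Jacobi weight; evaluating the limiting auxiliary $R_n$ with the explicit Jacobi recurrence data (the regularizing singularity keeping the relevant limit finite) gives $R_n(0)=2n+1+\alpha+\beta$, hence $y(0,\alpha,\beta)=1$.
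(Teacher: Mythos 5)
Your proposal is correct and is essentially the same approach as the proof this paper relies on: note that the paper itself gives no proof of this statement at all --- Theorem 2 is recalled verbatim from Theorem 7.2 of Chen--Dai \cite{CD2010}, with $S_n(t)$ there renamed $y(t,\alpha,\beta)$ --- and the proof in \cite{CD2010} proceeds exactly along the route you sketch: Chen--Ismail ladder operators with rational $A_n,B_n$ whose pole coefficients define $R_n,R_n^*,r_n,r_n^*$, the compatibility conditions $(S_1)$, $(S_2')$ giving algebraic difference relations, $t$-evolution via $\partial_t w=-w/x$ giving coupled Riccati equations, elimination of $r_n$ and the index shifts to reach a single second-order ODE for $R_n$, and the coefficient matching you perform against the canonical ${\rm P_{V}}$ form to read off $\left((2n+1+\alpha+\beta)^2/2,\,-\beta^2/2,\,\alpha,\,-1/2\right)$. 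The one step you leave unexecuted (the elimination, which you correctly flag as the crux) is precisely the computation carried out in \cite{CD2010}, and your boundary evaluation is also right, since at $t=0$ the weight is pure Jacobi and, e.g.\ for $n=0$, $R_0(0)=\beta\,B(\alpha+1,\beta)/B(\alpha+1,\beta+1)=\alpha+\beta+1$, consistent with $R_n(0)=2n+1+\alpha+\beta$.
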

\par
From results obtained in \cite{CD2010}, we show that the Hankel determinant has an integral representation in terms of ${\rm P_{V}}.$ See the Lemma below.
\begin{lemma}
The logarithmic derivative of the Hankel determinant $H_{n}(t,\alpha,\beta),$ and  $r_{n}^{*}$, can be expressed in terms of $y(t,\alpha,\beta)$ and $y'(t,\alpha,\beta)$ as follow,
\begin{align}\label{b4}
H_{n}(t,\alpha,\beta)&=t\frac{d}{dt}\ln{\frac{D_{n}(t,\alpha,\beta)}{D_{n}(0,\alpha,\beta)}}\nonumber\\
&=-\frac{1}{4y(y-1)^{2}}\left[\beta^{2}+2y^{3}-2(2n+\alpha+\beta)^{2}y^{3}-2(t+\beta)(2n+\alpha+\beta)y^{3}-y^{4}-t^{2}{y'}^{2}\right.\nonumber\\
&\left.+(2n+\alpha+\beta)^{2}y^{4}-4n(t+\beta)y-2\beta(t+\alpha+2\beta)y-2ty'y+2ty'y^{2}+4n^{2}y^{2}\right.\nonumber\\
&\left.+y^{2}\left((t+\alpha)^{2}+4t\beta+6\alpha{\beta}+6\beta^{2}-1\right)+4n(2t+\alpha+3\beta)y^{2}\right]+n(n+\alpha-t),
\end{align}
with the initial data $H_{n}(0,\alpha,\beta)=0.$
\end{lemma}
\begin{proof}
Recalling identities $(5.8)$, $(5.9)$ and $(7.2)$ in \cite{CD2010} as,
\bea\label{b5}
R_{n}(t)=\frac{(2n+1+\alpha+\beta)[2r_{n}^{2}+(t+2\beta-2r_{n}^{*})r_{n}+(2n+\alpha)r_{n}^{*}-nt-tr_{n}']}{2[(r_{n}^{*}-r_{n})^{2}+(2n+\alpha-t)r_{n}^{*}+(\beta+t)r_{n}-nt]},\nonumber
\eea
\bea\label{b6}
\frac{1}{R_{n}(t)}=\frac{2r_{n}^{2}+(t+2\beta-2r_{n}^{*})r_{n}+(2n+\alpha)r_{n}^{*}-nt+tr_{n}'}{2(2n+1+\alpha+\beta)(\beta+r_{n})r_{n}},\nonumber
\eea
and
\bea\label{b7}
r_{n}^{*}=\frac{1}{2R_{n}}\left[tR_{n}'-(2n+1+\alpha+\beta)(2r_{n}-R_{n}+\beta)\right]+r_{n}-\frac{R_{n}-\beta-t}{2}.\nonumber
\eea
With the aid of above equations and the definition of $y(t,\alpha,\beta)$ in (\ref{b8}), the variables
$r_{n}(t)$ and $r_{n}^{*}(t)$ may be expressed in terms of $y$ and $y'.$ Straightforward computation produces (\ref{b4}).
\end{proof}
Hence the Hankel determinant has an integral representation in terms of $y$ and $y'$, and that $y(t,\alpha,\beta)$ satisfies the ${\rm P_{V}}$ given by (\ref{b3}).
\par

\subsection{Scaling limit of the Hankel determinant in terms of Painlev\'{e} equations.}
Carrying out the double scaling and combining with Theorem 1, Theorem 2 and Lemma 1, we find that the (effectively) infinite dimensional Hankel determinant has an
integral representation in terms of $(\ref{a57}).$ Moreover, the logarithmic derivatives of such a  Hankel determinant satisfies another
$\sigma$-form of the corresponding Painlev\'{e} equation.
\begin{theorem}
Let
\bea\label{a50}
y(t,\alpha,\beta):=1+\frac{f(t,\alpha,\beta)}{n^{2}},\qquad {\rm and} \qquad s:=2n^{2}t.
\eea
$t \rightarrow 0^{+}$ and $n \rightarrow \infty$, such that $s\in(0, \infty).$
\\
If
$$
g(s,\alpha,\beta):=\lim_{n \rightarrow \infty}f\left(\frac{s}{2n^{2}},\alpha,\beta\right),
$$
then $g(s,\alpha,\beta)$ satisfies
\bea\label{a57}
g''=\frac{g'^{2}}{g}-\frac{g'}{s}+\frac{2g^{2}}{s^2}+\frac{\alpha}{2s}-\frac{1}{4g},
\eea
with the initial data $g(0,\alpha,\beta)=0,$ $g'(0,\alpha,\beta)=\frac{1}{2\alpha},$
The equation (\ref{a57}) is ${\rm P_{III'}}(8, 2\alpha, 0, -1)$.
\\
If
$$
\quad {\cal H}(s,\alpha,\beta):=\lim_{n \rightarrow \infty}H_{n}\left(\frac{s}{2n^{2}},\alpha,\beta\right), \quad
$$
then
${\cal H}(s,\alpha,\beta)$ satisfies,
\bea\label{a92}
\left(s{\cal H}''\right)^{2}+4\left({\cal H}'\right)^{2}\left(s{\cal H}'-{\cal H}\right)-\left(\alpha{{\cal H}'}+\frac{1}{2}\right)^{2}=0,
\eea
with the initial conditions ${\cal H}(0,\alpha,\beta)=0$, ${\cal H}'(0,\alpha,\beta)=-\frac{1}{2\alpha}$.

Furthermore, if
$$
\Delta(s,\alpha,\beta):=\lim_{n \rightarrow \infty}\frac{D_{n}\left(s/2n^{2},\alpha,\beta\right)}{D_{n}\left(0,\alpha,\beta\right)},\nonumber
$$
then
\bea\label{b9}
{\cal H}=s\frac{d}{ds}\ln{\Delta(s,\alpha,\beta)}=\frac{\left(s\:g'-g\right)^{2}}{4g^{2}}+\frac{4{\alpha}sg-s^{2}}{16g^{2}}-g-\frac{\alpha^{2}}{4}.
\eea
\end{theorem}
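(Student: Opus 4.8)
The plan is to obtain each of the three assertions by inserting the double-scaling substitution $y=1+f/n^{2}$, $t=s/(2n^{2})$ into the finite-$n$ relations of Theorem 1, Theorem 2 and Lemma 1, and then reading off the leading behaviour as $n\to\infty$; the book-keeping of orders in $1/n$ is the whole game. Throughout I would use that $s=2n^{2}t$ turns $t$-derivatives into $s$-derivatives via $\frac{d}{dt}=2n^{2}\frac{d}{ds}$, so that $y'=\frac{dy}{dt}=2\frac{df}{ds}\to 2g'$ and $y''=\frac{d^{2}y}{dt^{2}}=4n^{2}\frac{d^{2}f}{ds^{2}}\to 4n^{2}g''$, while $H_{n}'\to 2n^{2}\mathcal{H}'$ and $H_{n}''\to 4n^{4}\mathcal{H}''$.

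For the $\mathrm{P_{III'}}$ in \eqref{a57} I would substitute the above into the $\mathrm{P_{V}}$ equation \eqref{b3}. Writing $y-1=f/n^{2}$, each term on the right is a definite power of $n$: the terms $\frac{3y-1}{2y(y-1)}(y')^{2}$, $-y'/t$, the $(2n+1+\alpha+\beta)^{2}(y-1)^{2}y/(2t^{2})$ term, $\alpha y/t$ and $-y(y+1)/[2(y-1)]$ are all of order $n^{2}$ and converge respectively to $4n^{2}g'^{2}/g$, $-4n^{2}g'/s$, $8n^{2}g^{2}/s^{2}$, $2n^{2}\alpha/s$ and $-n^{2}/g$, whereas the $\beta^{2}$-term $-(y-1)^{2}\beta^{2}/(2t^{2}y)$ is only of order $1$ and drops out. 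Equating with $y''\to 4n^{2}g''$ and dividing by $4n^{2}$ reproduces \eqref{a57}; in particular $g$ is $\beta$-independent. The value $g(0)=0$ is forced by $y(0,\alpha,\beta)=1$ (so $f(0)=0$), and $g'(0)=\frac{1}{2\alpha}$ follows from demanding regularity of \eqref{a57} at $s=0$: inserting $g\sim c\,s$ cancels the $g'^{2}/g$ and $-g'/s$ terms, leaving the $1/s$ singularities $\frac{\alpha}{2s}-\frac{1}{4c\,s}$, whose vanishing gives $c=1/(2\alpha)$. Matching the coefficients of $g^{2}/s^{2}$, $1/s$, $g^{3}$ and $1/g$ in \eqref{a57} against the canonical $\mathrm{P_{III'}}$ form then identifies the parameters as $(8,2\alpha,0,-1)$.

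For the $\sigma$-form \eqref{a92} I would scale the Jimbo--Miwa--Okamoto equation \eqref{b2} directly. The left side $(tH_{n}'')^{2}\to 4n^{4}s^{2}(\mathcal{H}'')^{2}$ is of order $n^{4}$. On the right, the bracket $n(n+\alpha+\beta)-H_{n}+(\alpha+t)H_{n}'$ equals $n^{2}(1+2\alpha\mathcal{H}')+O(n)$, so its square contributes $n^{4}(1+2\alpha\mathcal{H}')^{2}$ at leading order, while $4H_{n}'(tH_{n}'-H_{n})(\beta-H_{n}')\to -16n^{4}(\mathcal{H}')^{2}(s\mathcal{H}'-\mathcal{H})$, the $\beta$ in the last factor again being subleading. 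Collecting the order-$n^{4}$ terms and dividing by $4n^{4}$ gives $(s\mathcal{H}'')^{2}=\tfrac14(1+2\alpha\mathcal{H}')^{2}-4(\mathcal{H}')^{2}(s\mathcal{H}'-\mathcal{H})$, which is \eqref{a92} once $\tfrac14(1+2\alpha\mathcal{H}')^{2}$ is written as $(\alpha\mathcal{H}'+\tfrac12)^{2}$. The conditions $\mathcal{H}(0)=0$ and $\mathcal{H}'(0)=-\frac{1}{2\alpha}$ follow from $H_{n}(0)=0$ and a local analysis of \eqref{a92} at $s=0$ (with $\mathcal{H}\sim e\,s$ the equation collapses to $(\alpha e+\tfrac12)^{2}=0$).

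For the integral representation \eqref{b9}, the identity $\mathcal{H}=s\frac{d}{ds}\ln\Delta$ is immediate from the definition of $H_{n}$, since $t\frac{d}{dt}=s\frac{d}{ds}$ under the scaling and $\ln\frac{D_{n}(t)}{D_{n}(0)}\to\ln\Delta$. The explicit formula is obtained by feeding $y=1+f/n^{2}$, $t=s/(2n^{2})$ and $y'\to 2g'$ into Lemma 1's expression \eqref{b4}: the prefactor $-1/[4y(y-1)^{2}]$ blows up like $n^{4}/(4f^{2})$, so the divergent $O(n^{2})$ part of $-\frac{1}{4y(y-1)^{2}}[\cdots]$ must cancel against $n(n+\alpha-t)$, and the surviving $O(1)$ remainder is precisely the right side of \eqref{b9}. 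As a check one verifies the correct $s\to 0$ behaviour ($\frac{4\alpha sg-s^{2}}{16g^{2}}\to\frac{\alpha^{2}}{4}$ with all other pieces vanishing, so $\mathcal{H}(0)=0$) and, using \eqref{a57}, that this remainder indeed solves \eqref{a92}. The main obstacle is exactly this reduction: the bracket in \eqref{b4} is a degree-four polynomial in $y$ with $n$-, $\alpha$- and $\beta$-dependent coefficients, and the finite limit appears only after both the leading and the subleading divergences cancel, so one must expand \eqref{b4} to sufficiently high order in $1/n^{2}$ and control each cancellation — a lengthy but routine computation, cleanest if carried out through the $r_{n},r_{n}^{*}$ representation used in the proof of Lemma 1 rather than on \eqref{b4} directly.
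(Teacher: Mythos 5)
Your proposal is correct and follows essentially the same route as the paper: the paper's (very terse) proof consists precisely of substituting the scaling \eqref{a50} into the ${\rm P_V}$ equation \eqref{b3} to get \eqref{a57}, plugging $s=2n^{2}t$ into the $\sigma$-form \eqref{b2} to get \eqref{a92}, and substituting \eqref{a50} into Lemma 1's expression \eqref{b4} to get \eqref{b9}. Your order-in-$n$ bookkeeping, the identification of the surviving versus subleading ($\beta$-dependent) terms, and the regularity analysis at $s=0$ fixing $g'(0)=\frac{1}{2\alpha}$ and ${\cal H}'(0)=-\frac{1}{2\alpha}$ simply supply the details the paper leaves implicit.
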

\begin{proof}
Substituting the definition (\ref{a50}) into $(\ref{b3})$, then we see that $g(s,\alpha,\beta)$
satisfied  ${\rm P_{III'}}(8, 2\alpha, 0, -1)$. See $(\ref{a57}).$
Plugging $s=2n^{2}t$ into $(\ref{b2}),$ we see that the limit of $H_{n}(t,\alpha,\beta)$ satisfies a $\sigma$-from Painlev\'{e} equation $(\ref{a92}).$
Moreover, substituting $(\ref{a50})$ into $(\ref{b4}),$ then the equation $(\ref{b9})$ is found.
\end{proof}
\par
Note that the differential equation is satisfied by $g(s,\alpha,\beta)$ is a ${\rm P_{III'}}(8, 2\alpha, 0, -1)$, see \cite{GLS2002, KSO2006}. From ${\rm P_{III'}}(8, 2\alpha, 0, -1)$ and the $\sigma$-form of the  Painlev\'{e} equation, together with the boundary conditions, $g(s,\alpha,\beta)$ and ${\cal H}(s,\alpha,\beta)$ are {\it independent} of $\beta$. We use these notations to distinguish this result from those previously obtained \cite{CC2014}. The $\sigma$-from Painlev\'{e} equation of $(\ref{a92})$ has the same  double scaling limit of the logarithmic derivative of the Hankel determinant generated by the singularly perturbed Laguerre weight,
$x^{\alpha}{\rm e}^{-x-t/x}$, where $x\geq 0,\;t>0,$ and real $\alpha,$ however, the double scaling scheme is different from that in \cite{CC2014}.

{\bf Remark 1:}
By the change variables
\bea\label{a60}
F(x,\alpha,\beta)=\frac{8}{x}g\left(\frac{x^{2}}{8},\alpha,\beta\right), \nonumber
\eea
it is seen that the ${\rm P_{III'}}(8, 2\alpha, 0, -1)$ satsified by $g$ becomes,
\bea\label{a61}
F''=\frac{F'^{2}}{F}-\frac{F'}{x}+\frac{F^{2}}{x}+\frac{2\alpha}{x}-\frac{1}{F},\nonumber
\eea
  a ${\rm P_{III}}(1, 2\alpha, 0, -1)$. See \cite{GLS2002}. The $C$ potential (2.22) introduced in
\cite{CC2014}, after a change of variable, satisfies the above equation.
\par
There are three algebraic solutions of ${\rm P_{III'}}(8, 2\alpha, 0, -1),$
\bea\label{a66}
g(s,\alpha,\beta)=\frac{1}{2}s^{\frac{2}{3}},\quad {\rm for}\quad \alpha=0. \nonumber
\eea
\bea\label{a67}
g(s,\alpha,\beta)=\frac{1}{2}s^{\frac{2}{3}}\mp\frac{1}{6}s^{\frac{1}{3}},\quad {\rm for}\quad \alpha=\pm1. \nonumber
\eea

\subsection{Asymptotic expansions of the scaled Hankel determinant.}
We assume that the solution of ${\rm P_{III'}}(8, 2\alpha, 0, -1)$ for $s\rightarrow0^{+}$ has the power series expansion $\sum_{j=0}^{\infty}a_{j}s^{j},$
with $g(0)=0,$ $g'(0)=\frac{1}{2\alpha},$ and substitute this into (\ref{a57}), by some straightforward computations, one finds,
\begin{align}\label{a72}
g(s,\alpha,\beta)=&\frac{1}{2\alpha}s-\frac{1}{2\alpha^{2}(\alpha^{2}-1)}s^{2}+\frac{3}{2\alpha^{3}(\alpha^{2}-4)(\alpha^{2}-1)}s^{3}
+\frac{9-6\alpha^{2}}{\alpha^{4}(\alpha^{2}-1)^{2}(\alpha^{2}-4)(\alpha^{2}-9)}s^{4}\nonumber\\
&+\frac{5(-36+11\alpha^{2})}{2\alpha^{5}(\alpha^{2}-1)^{2}(\alpha^{2}-4)(\alpha^{2}-9)(\alpha^{2}-16)}s^{5}+\mathcal{O}({s^{6}}), \;\; {\rm where\;\;}\alpha\neq \mathbb{Z}.
\end{align}
\par
For large and positive $s$, we assume that the solution of (\ref{a57}) has the following expansion $ \sum_{j=-2}^{\infty}b_{j}s^{-\frac{j}{3}}.$
The first term of the expansion is $s^{2/3}/2$. A straight forward computation gives,
\begin{align}\label{a74}
g(s,\alpha,\beta)=&\frac{1}{2}s^{\frac{2}{3}}-\frac{\alpha}{6}s^{\frac{1}{3}}+\frac{\alpha(\alpha^{2}-1)}{162}s^{-\frac{1}{3}
}+\frac{\alpha^{2}(\alpha^{2}-1)}{486}s^{-\frac{2}{3}}+\frac{\alpha(\alpha^{2}-1)}{486}s^{-1}\nonumber\\
&-\frac{\alpha^{2}(\alpha^{2}-1)(2\alpha^{2}-11)}{6561}s^{-\frac{4}{3}}+\mathcal{O}({s^{-\frac{5}{3}}}).
\end{align}
\par
Note that this solution becomes the algebraic solutions mentioned in the Remark 1, for $\alpha=0$, and $\alpha=\pm1$
\par
In the next Theorem we obtain asymptotic expressions of the scaled Hankel determinant for small $s$ and large $s$.
\begin{theorem}
Under the double scaling scheme, the asymptotic expansions of the scaled Hankel determinant generated by the Pollaczek-Jacobi type weight, has the following small and large $s$ expansions:
\par
For small $s$,
\begin{align}\label{b27}
\Delta(s,\alpha,\beta)=&\exp\left\{-\frac{s}{2\alpha}+\frac{s^{2}}{8\alpha^{2}(\alpha^{2}-1)}-\frac{s^{3}}{6\alpha^{3}(\alpha^{2}-1)(\alpha^{2}-4)}
+\frac{3(2\alpha^2-3)s^{4}}{16\alpha^{4}(\alpha^{2}-1)^{2}(\alpha^{2}-4)(\alpha^{2}-9)}\right.\nonumber\\
&\left.+\frac{(36-11\alpha^{2})s^{5}}{10\alpha^{5}(\alpha^{2}-1)^{2}(\alpha^{2}-4)(\alpha^{2}-9)(\alpha^{2}-16)}+\mathcal{O}({s^{6})}\right\},
\end{align}
where $\alpha \notin \mathbb{Z}$.
\par
For large $s$,
\begin{align}\label{b30}
\Delta(s,\alpha,\beta)=&\exp\left[c-\frac{9}{8}s^{\frac{2}{3}}+\frac{3\alpha}{2}s^{\frac{1}{3}}+\frac{1-6\alpha^{2}}{36}\ln{s}+\frac{\alpha(1-\alpha^{2})}{18}s^{-\frac{1}{3}}+\frac{\alpha^{2}(1-\alpha^{2})}{216}s^{-\frac{2}{3}}\right.\nonumber\\
&\left.+\frac{\alpha(1-\alpha^{2})}{486}s^{-1}+\mathcal{O}(s^{-\frac{4}{3}})\right],
\end{align}
where $c=c(\alpha)$ is an integration constant, independent of $s$.
\end{theorem}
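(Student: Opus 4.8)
The plan is to exploit the closed formula (\ref{b9}), which gives $\mathcal{H}(s,\alpha,\beta) = s\frac{d}{ds}\ln\Delta(s,\alpha,\beta)$ as an explicit rational expression in $g$, $g'$ and $s$. Since the small- and large-$s$ series of $g$ are already in hand from (\ref{a72}) and (\ref{a74}), the whole theorem reduces to three mechanical steps: substitute the relevant expansion of $g$ into (\ref{b9}) to obtain the expansion of $\mathcal{H}$; divide by $s$ and integrate term by term to recover $\ln\Delta$; and exponentiate. I would carry out the two regimes separately.

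For small $s$, I would insert the power series (\ref{a72}) and its derivative into (\ref{b9}). The apparent singularities are harmless: the combination $sg'-g$ begins at order $s^2$, and $4\alpha sg - s^2$ also begins at order $s^2$, so $\frac{(sg'-g)^2}{4g^2}$ vanishes at $s=0$ while $\frac{4\alpha sg - s^2}{16g^2}$ tends to $\frac{\alpha^2}{4}$ and cancels the explicit $-\frac{\alpha^2}{4}$. This reproduces $\mathcal{H}(0,\alpha,\beta)=0$ and gives a genuine power series $\mathcal{H} = -\frac{s}{2\alpha} + \mathcal{O}(s^2)$. Hence $\frac{d}{ds}\ln\Delta = \mathcal{H}/s$ is a power series; integrating from $0$ and using $\Delta(0,\alpha,\beta)=1$ (so $\ln\Delta$ vanishes at $s=0$ and no constant of integration appears) yields (\ref{b27}) upon exponentiation.

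For large $s$, I would substitute the algebraic expansion (\ref{a74}) into (\ref{b9}). The leading term $g\sim\frac{1}{2}s^{2/3}$ produces $\mathcal{H} = -\frac{3}{4}s^{2/3} + \mathcal{O}(s^{1/3})$ together with a nonzero constant term, whose value $\frac{1-6\alpha^2}{36}$ emerges only after collecting the $\mathcal{O}(1)$ contributions of all four pieces of (\ref{b9}). Dividing by $s$ and integrating, the fractional powers $s^{-1/3}, s^{-2/3},\dots$ integrate to powers of $s$, whereas this constant term is exactly the coefficient of $s^{-1}$ and therefore integrates to the logarithm $\frac{1-6\alpha^2}{36}\ln s$; the integration also throws up one additive constant. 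Exponentiating gives (\ref{b30}).

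The genuinely hard point is the constant $c=c(\alpha)$ in the large-$s$ formula: integrating an asymptotic series determines $\ln\Delta$ only up to an additive constant, and this constant is precisely the connection datum linking the behaviour of $g$ near $s=0$ to its algebraic growth as $s\to\infty$; it cannot be extracted from (\ref{b9}) alone. Its evaluation is therefore deferred to Section 2.4, where an adaptation of the Szeg\"o limit theorem to Hankel determinants supplies $c(\alpha)$. As a consistency check I would also substitute the algebraic solutions for $\alpha=0,\pm1$ from Remark 1 directly into (\ref{b9}) and verify that they reproduce the corresponding specializations of (\ref{b27}) and (\ref{b30}).
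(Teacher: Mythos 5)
Your proposal is correct and follows essentially the same route as the paper: the paper likewise uses \eqref{b9} to write $\ln\Delta(s,\alpha,\beta)=\int_{0}^{s}\left(\frac{(\xi g'-g)^{2}}{4\xi g^{2}}+\frac{4\alpha\xi g-\xi^{2}}{16\xi g^{2}}-\frac{4g+\alpha^{2}}{4\xi}\right)d\xi$ and then substitutes the small-$s$ series \eqref{a72} and the large-$s$ series \eqref{a74} of $g$ to obtain \eqref{b27} and \eqref{b30}, with $c(\alpha)$ left as an integration constant determined later in Section 2.4. Your additional observations (the cancellation of the apparent poles at $s=0$, the constant term $\frac{1-6\alpha^{2}}{36}$ of ${\cal H}$ integrating to the logarithm, and the status of $c(\alpha)$ as a connection constant) are accurate and merely spell out what the paper calls ``straightforward computations.''
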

\begin{proof}
By $(\ref{b9})$, we see that
$$
\ln{\Delta(s,\alpha,\beta)}=\int_{0}^{s}\left(\frac{\left(\xi{g'}-g\right)^{2}}{4\xi{g^{2}}}+\frac{4{\alpha}\xi{g}-\xi^{2}}{16\xi{g^{2}}}-\frac{4g+\alpha^{2}}{4\xi}\right)d\xi.
$$
For small $s$, with $g(s,\alpha,\beta)$  given by $(\ref{a72})$  the asymptotic expansion $\Delta(s,\alpha,\beta)$, $(\ref{b27})$ follows immediately. Similarly, for large $s$, with $g$ given by $(\ref{a74})$, the equation $(\ref{b30})$ is obtained, following straightforward computations.
\end{proof}
Note that the scaling limit of Hankel determinant via from the Pollaczek-Jacobi type weight is independent of $\beta$.
\par
In what follows, we give an account which will ultimately determine $c(\alpha).$
We start from,
\begin{equation*}
(-1)^{n}P_{n}(0; t, \alpha, \beta)=\frac{D_{n}(t,\alpha+1, \beta)}{D_{n}(t, \alpha, \beta)}.
\end{equation*}
\begin{cor}
Sending $n \rightarrow \infty,$ $t \rightarrow 0^{+}$ and such that $s=2n^{2}t,$ is finite, then
\begin{align}\label{a134}
\lim\limits_{n \rightarrow \infty}\frac{(-1)^{n}P_{n}(0; \frac{s}{2n^{2}}, \alpha, \beta)}{(-1)^{n}P_{n}(0; 0, \alpha, \beta)}=&\frac{\Delta(s, \alpha+1, \beta)}{\Delta(s, \alpha, \beta)}=\exp\left(c_{1}+\frac{3}{2}s^{\frac{1}{3}}-\frac{1+2\alpha}{6}\ln{s}-\frac{\alpha(\alpha+1)}{6}s^{-\frac{1}{3}}\right.\nonumber\\
&\left.-\frac{\alpha(\alpha+1)(2\alpha+1)}{108}s^{-\frac{2}{3}}-\frac{\alpha(\alpha+1)}{162}s^{-1}+\mathcal{O}(s^{-\frac{4}{3}})\right),
\end{align}
where $c_{1}=c_{1}(\alpha)$ is a constant, independent of $s,$ and
\begin{equation}\label{aha1}
c_{1}(\alpha)=c(\alpha+1)-c(\alpha)
\end{equation}
and $c(\alpha)$ is the constant in (\ref{b30}).
\end{cor}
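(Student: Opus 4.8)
The plan is to reduce the Corollary to two ingredients already established: the constant-term identity $(-1)^nP_n(0;t,\alpha,\beta)=D_n(t,\alpha+1,\beta)/D_n(t,\alpha,\beta)$ displayed just above the statement, and the large-$s$ expansion of $\Delta$ recorded in \eqref{b30}. First I would prove the first equality in \eqref{a134}. Evaluating the constant-term identity at $t=s/2n^2$ and at $t=0$ and forming the quotient, the four Hankel determinants regroup as
$$\frac{(-1)^nP_n(0;\tfrac{s}{2n^2},\alpha,\beta)}{(-1)^nP_n(0;0,\alpha,\beta)}=\frac{D_n(s/2n^2,\alpha+1,\beta)/D_n(0,\alpha+1,\beta)}{D_n(s/2n^2,\alpha,\beta)/D_n(0,\alpha,\beta)}.$$
By the definition of $\Delta$ in Theorem 4, the numerator tends to $\Delta(s,\alpha+1,\beta)$ and the denominator to $\Delta(s,\alpha,\beta)$, whence the quotient converges to $\Delta(s,\alpha+1,\beta)/\Delta(s,\alpha,\beta)$. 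The only point needing care is that the limit may be passed through the quotient; this is legitimate because each $D_n$ is a Gram determinant of a positive weight, hence positive, and the scaled ratios converge to $\Delta>0$ (positivity is visible from \eqref{b30} itself, which exhibits $\Delta$ as an exponential).

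Next I would extract the explicit form. Writing $\Phi(s,\alpha)$ for the bracketed exponent in \eqref{b30}, the first equality gives
$$\ln\frac{\Delta(s,\alpha+1,\beta)}{\Delta(s,\alpha,\beta)}=\Phi(s,\alpha+1)-\Phi(s,\alpha),$$
so the entire computation is the discrete difference $\alpha\mapsto\alpha+1$ of the coefficients appearing in \eqref{b30}. The leading $-\tfrac98 s^{2/3}$ term is $\alpha$-independent and cancels; the $s^{1/3}$ coefficient $\tfrac{3\alpha}{2}$ shifts by $\tfrac32$; the $\ln s$ coefficient $\tfrac{1-6\alpha^2}{36}$ shifts by $-\tfrac{1+2\alpha}{6}$; and the $s^{-1/3},s^{-2/3},s^{-1}$ coefficients, each a cubic in $\alpha$, shift by $-\tfrac{\alpha(\alpha+1)}{6}$, $-\tfrac{\alpha(\alpha+1)(2\alpha+1)}{108}$, and $-\tfrac{\alpha(\alpha+1)}{162}$ respectively. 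Matching these against \eqref{a134} confirms every coefficient. The algebra is routine once one notes the clean factorizations after shifting, for instance $(\alpha+1)\bigl(1-(\alpha+1)^2\bigr)-\alpha(1-\alpha^2)=-3\alpha(\alpha+1)$, which drives both the $s^{-1/3}$ and $s^{-1}$ entries.

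Finally, the constant terms of $\Phi$ contribute $c(\alpha+1)-c(\alpha)$ to the exponent, and by the definition of $c_1(\alpha)$ as the $s$-independent constant in \eqref{a134} this is precisely the relation \eqref{aha1}. I expect no genuine obstacle in the proof as stated, since it is entirely the matching above; the substantive point lies just outside it. Namely, \eqref{aha1} becomes a usable functional equation pinning down $c(\alpha)$ only once the left-hand side of \eqref{a134} is evaluated \emph{independently}, through the Szeg\"o-type limit theorem for Hankel determinants announced for Section 2.4. I would therefore flag that the determination of $c(\alpha)$ rests on that separate asymptotic input, whereas the Corollary itself is established purely by the regrouping of determinants and the term-by-term subtraction of \eqref{b30}.
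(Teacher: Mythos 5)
Your proposal is correct and follows essentially the same route as the paper: regroup the four Hankel determinants into the two scaled ratios via the constant-term identity, pass to the limit to obtain $\Delta(s,\alpha+1,\beta)/\Delta(s,\alpha,\beta)$, and then read off \eqref{a134} and \eqref{aha1} by differencing the exponent of \eqref{b30} in $\alpha$. The paper's proof is just a terser version of exactly this argument; your explicit coefficient checks (e.g.\ the factorization $(\alpha+1)\bigl(1-(\alpha+1)^2\bigr)-\alpha(1-\alpha^2)=-3\alpha(\alpha+1)$) and your remark that pinning down $c(\alpha)$ requires the separate Section~2.4 input are accurate elaborations of what the paper leaves implicit.
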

\begin{proof}
From the fact
\begin{align*}
\lim\limits_{n \rightarrow \infty}\frac{(-1)^{n}P_{n}(0; \frac{s}{2n^{2}}, \alpha, \beta)}{(-1)^{n}P_{n}(0; 0, \alpha, \beta)}&=\lim\limits_{n \rightarrow \infty}\frac{D_{n}(s/(2n^{2}),\alpha+1, \beta)}{D_{n}(s/(2n^{2}), \alpha, \beta)}\frac{D_{n}(0,\alpha, \beta)}{D_{n}(0,\alpha+1,\beta)}=\frac{\Delta(s,\alpha+1,\beta)}{\Delta(s,\alpha,\beta)},
\end{align*}
the equation (\ref{a134}) and (\ref{aha1}) follow from (\ref{b30}).
\end{proof}
 \par
A computation that produces the constant $c_{1}(\alpha)$ can be found in Section $2.4.$

\subsection{The Hankel determinant for large $\beta$.}
In this subsection, we are interested in the behavior of the Hankel determinant for large $\beta$ and introduce a different scaling process.
Let $n\rightarrow\infty,$ $t \rightarrow 0^{+},$ $\beta:=n\widetilde{\beta},$ $s:=2(1+\widetilde{\beta})n^{2}t,$ such that $s$ and $\widetilde{\beta}$ are fixed,
we then obtain the same Painlev\'{e} equations in the theorem 3; just replace $f(t,\alpha,\beta)$ of $(\ref{a50})$ in the theorem 3 with $f(t,\alpha,\widetilde{\beta})/(1+\widetilde{\beta})$. We state these results in the theorem below.
\par
\begin{theorem}
Let
\bea\label{a58}
\beta:=n\widetilde{\beta},\qquad s:=2(1+\widetilde{\beta})n^{2}t,\qquad {\rm and} \qquad y(t):=1+\frac{f(t,\alpha,\beta)}{(1+\widetilde{\beta})n^{2}},
\eea
$t \rightarrow 0^{+}$ and $n \rightarrow \infty$ such that $s$ and $\widetilde{\beta}$ are finite, $s\in(0,\infty)$ and $\widetilde{\beta}\in(-1,\infty).$
\\
If
$$
g(s,\alpha,\widetilde{\beta}):=\lim_{n \rightarrow \infty}f\left(\frac{s}{2(1+\widetilde{\beta})n^{2}},\alpha,n\widetilde{\beta}\right),
$$
then
$g(s,\alpha,\widetilde{\beta})$ satisfies the following ${\rm P_{III'}(8, 2\alpha, 0, -1)},$
\bea\label{a59}
g''=\frac{g'^{2}}{g}-\frac{g'}{s}+\frac{2g^{2}}{s^2}+\frac{\alpha}{2s}-\frac{1}{4g},
\eea
with initial conditions $g(0,\alpha,\widetilde{\beta})=0$, $g'(0,\alpha,\widetilde{\beta})=\frac{1}{2\alpha}$,
\\
If
$$
\quad {\cal H}(s,\alpha,\widetilde{\beta}):=\lim_{n \rightarrow \infty}H_{n}\left(\frac{s}{2(1+\widetilde{\beta})n^{2}},\alpha,n\widetilde{\beta}\right),
$$
then ${\cal H}(s,\alpha,\widetilde{\beta})$ satisfies,
\bea\label{b32}
\left(s{\cal H}''\right)^{2}+4\left({\cal H}'\right)^{2}\left(s{\cal H}'-{\cal H}\right)-\left(\alpha{{\cal H}'}+\frac{1}{2}\right)^{2}=0,
\eea
with initial conditions ${\cal H}(0,\alpha,\widetilde{\beta})=0$, ${\cal H}'(0,\alpha,\widetilde{\beta})=-\frac{1}{2\alpha}.$
Moreover, if
$$
 \Delta(s,\alpha,\widetilde{\beta}):=\lim_{n \rightarrow \infty}\frac{D_{n}\left(\frac{s}{2(1+\widetilde{\beta})n^{2}},\alpha,n\widetilde{\beta}\right)}{D_{n}\left(0,\alpha,n\widetilde{\beta}\right)},\nonumber
$$
then
\bea\label{bb9}
{\cal H}(s,\alpha,\widetilde{\beta})=s\frac{d}{ds}\ln{\Delta(s,\alpha,\widetilde{\beta})}=\frac{\left(s{g}'-g\right)^{2}}{4g^{2}}+\frac{4{\alpha}s{g}-s^{2}}{16g^{2}}-g-\frac{\alpha^{2}}{4}.
\eea
\end{theorem}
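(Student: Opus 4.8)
The plan is to establish Theorem 6 essentially by reduction to Theorem 3. The key observation is that the two scaling schemes differ only by how $\beta$ is handled: Theorem 3 keeps $\beta$ fixed, whereas here $\beta = n\widetilde\beta$ grows linearly in $n$. The strategy is to re-examine the limiting procedure in the proof of Theorem 3, inserting $\beta = n\widetilde\beta$ and $s = 2(1+\widetilde\beta)n^2 t$ in place of $\beta$ fixed and $s = 2n^2 t$, and to verify that the large-$n$ limits produce the same differential equations. Concretely, I would substitute the ansatz $y(t) = 1 + f(t,\alpha,\beta)/\big((1+\widetilde\beta)n^2\big)$ from \eqref{a58} directly into the ${\rm P_V}$ equation \eqref{b3} of Theorem 2.

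First I would carry out the substitution into \eqref{b3}. The coefficients of \eqref{b3} carry the combination $2n+1+\alpha+\beta$; under $\beta = n\widetilde\beta$ this becomes $(2+\widetilde\beta)n + 1 + \alpha$, so the leading behaviour is $(2+\widetilde\beta)n$. Simultaneously $t = s/\big(2(1+\widetilde\beta)n^2\big)$, so $t^{-1}$ and $t^{-2}$ scale as powers of $n$ that must be tracked carefully. The point of the rescaling factor $1/(1+\widetilde\beta)$ built into the definition of $f$ is precisely to arrange that, after expanding $y = 1 + f/\big((1+\widetilde\beta)n^2\big)$ and collecting the dominant power of $n$, the $\widetilde\beta$-dependence cancels out of the limiting equation, leaving \eqref{a59}, which is identical to \eqref{a57}. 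I would show that the $\beta^2$-term in \eqref{b3}, namely $-(y-1)^2\beta^2/(2t^2 y)$, which naively looks dangerous since $\beta^2 = n^2\widetilde\beta^2$ is large, is tamed by the factor $(y-1)^2 = f^2/\big((1+\widetilde\beta)^2 n^4\big)$ and the rescaling of $t^2$, so that it contributes at the same order as the other terms and assembles into the $\widetilde\beta$-free limit. The same bookkeeping then yields the initial conditions $g(0,\alpha,\widetilde\beta)=0$ and $g'(0,\alpha,\widetilde\beta) = 1/(2\alpha)$ from the boundary condition $y(0,\alpha,\beta)=1$ of Theorem 2 together with the small-$t$ behaviour of $y$.

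Next, for \eqref{b32}, I would plug $s = 2(1+\widetilde\beta)n^2 t$ and $\beta = n\widetilde\beta$ into the $\sigma$-form ODE \eqref{b2} of Theorem 1, following the same route used to derive \eqref{a92} in the proof of Theorem 3. Here the terms $n(n+\alpha+\beta)$ and $\beta - H_n'$ in \eqref{b2} both contain growing $\beta$, but the structure of the squares on the right-hand side, combined with the scaling $H_n \to {\cal H}$ and $t\,d/dt = s\,d/ds$, should again conspire so that the $\widetilde\beta$-dependent pieces either cancel or are subleading, reproducing \eqref{b32} verbatim. Finally, the integral representation \eqref{bb9} follows by substituting the scaled ansatz into Lemma 1's expression \eqref{b4}: one expands $H_n(t,\alpha,\beta)$ in the new variables, keeps the leading order in $n$, and identifies the result with the right-hand side of \eqref{bb9}, using $y' = f'/\big((1+\widetilde\beta)n^2\big)$ and the chain rule relating $d/dt$ and $d/ds$.

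The main obstacle I anticipate is the careful power-counting in the substitution into \eqref{b3}, specifically demonstrating that the potentially divergent $\beta^2/t^2$ and $(2n+\cdots)^2/t^2$ terms combine into finite, $\widetilde\beta$-independent limits. Unlike Theorem 3 where $\beta$ stays bounded, here several terms that were previously subleading now sit at the same order as the dominant ones, so one must verify that the cancellations are exact rather than merely fortuitous; the rescaling factor $1/(1+\widetilde\beta)$ in the definition of $f$ is the key device that makes this work, and the heart of the proof is checking that it is chosen correctly at every order. Once the limiting ${\rm P_{III'}}$ equation \eqref{a59} is secured with its stated initial data, the remaining statements \eqref{b32} and \eqref{bb9} follow by the same mechanical substitutions already validated in the proof of Theorem 3, so I would present those briefly and refer back to that argument.
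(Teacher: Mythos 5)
Your proposal follows exactly the paper's own route: the paper proves this theorem by substituting \eqref{a58} into the ${\rm P_V}$ equation \eqref{b3} to obtain \eqref{a59}, plugging $\beta=n\widetilde{\beta}$, $s=2(1+\widetilde{\beta})n^{2}t$ into the $\sigma$-form \eqref{b2} to obtain \eqref{b32}, and substituting \eqref{a58} into Lemma 1's expression \eqref{b4} to obtain \eqref{bb9}. Your additional power-counting observation --- that the dangerous $(2n+1+\alpha+\beta)^2$ and $\beta^{2}/t^{2}$ terms combine via $(2+\widetilde{\beta})^{2}-\widetilde{\beta}^{2}=4(1+\widetilde{\beta})$, which the factor $1/(1+\widetilde{\beta})$ in the ansatz then removes --- is correct and merely makes explicit what the paper leaves as a routine computation.
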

\begin{proof}
Substituting $(\ref{a58})$ into the $(\ref{b3}),$ $g(s,\alpha,\widetilde{\beta})$ is found to satisfy $(\ref{a59}).$  Plugging $\beta=n\widetilde{\beta}$ and $s=2(1+\widetilde{\beta})n^{2}t$ into the $\sigma$-from Painlev\'{e} equation is satisfied by $H_{n}(t,\alpha,\beta)$ in the theorem 1,
 we see that ${\cal H}(s,\alpha,\widetilde{\beta})$ satisfies $(\ref{b32})$. Moreover, substituting $(\ref{a58})$ into $(\ref{b4})$, the equation $(\ref{bb9})$ follows.
\end{proof}
{\bf Remark 2:}
Comparing theorem 3 with theorem 5, one finds that the Painlev\'{e} equations are the same, although we emphasize that their scaling scheme are different from each other.

\subsection{Large $n$ behavior.}
In order to find the constant $c_{1}(\alpha)$ in (\ref{a134}), we need to determine the large $n$ behavior of the constant terms of the monic
orthogonal polynomial, namely, $P_{n}(0;t,\alpha,\beta).$ The Szeg\"o limit theorem  which computes the determinants of the finite section of Toeplitz matrix
with nice symbols, can be adapted to orthogonal polynomials on the line.
\\
The large $n$ computation for the orthogonal polynomials where the potential ${\rm v}$ satisfies the convexity condition \cite{YIsmail1997}, can be found in
\cite{YIsmail1997, YN1998} and also in \cite{JK1998}.
\\
In fact, as $n\rightarrow\infty$,  $P_{n}(z)$ is approximated by
\begin{equation}\label{a112}
P_{n}(z) \sim {\rm exp}\left[-S_{1}(z)-S_{2}(z)\right],
\end{equation}
valid for $z \notin [a, b].$ Here $S_{1}(z)$ and $S_{2}(z)$ are given by ($(4.6)$ and $(4.7)$ in \cite{YN1998}).
These formulas are
\begin{equation}\label{a113}
{\rm exp}(-S_{1}(z))=\frac{1}{2}\left[\left(\frac{z-b}{z-a}\right)^{\frac{1}{4}}+\left(\frac{z-a}{z-b}\right)^{\frac{1}{4}}\right], \quad z \notin [a, b].
\end{equation}
and
\begin{align}\label{a114}
S_{2}(z)=-n\ln&\left(\frac{\sqrt{z-a}+\sqrt{z-b}}{2}\right)^{2}\nonumber \\
&+\frac{1}{2\pi}\int_{a}^{b}\frac{{\rm v}(x)}{\sqrt{(b-x)(x-a)}}\left[\frac{\sqrt{(z-a)(z-b)}}{x-z}+1\right]dx,\quad z \notin [a, b].
\end{align}
For the problem at hand, the Pollaczek-Jacobi type weight of (\ref{a0a1}), then ${\rm v}(x)$ and ${\rm v}'(x)$ are given by,
\bea\label{a4}
{\rm v}(x)=-\ln{w(x)}=\frac{t}{x}-\alpha{\ln{x}}-\beta{\ln(1-x)},\quad {\rm and}\quad {\rm v}'(x)=-\frac{t}{x^{2}}-\frac{\alpha}{x}-\frac{\beta}{x-1}. \nonumber
\eea

Substituting ${\rm v(x)}$ and ${\rm v}'(x)$ into the following identities,
\bea\label{a2a}
\int_{a}^{b}\frac{{\rm v'}(x)}{\sqrt{(b-x)(x-a)}}dx=0,
\eea
and
\bea\label{a3}
\int_{a}^{b}\frac{x{\rm v'}(x)}{\sqrt{(b-x)(x-a)}}dx=2\pi{n}.
\eea
These can be found, for instance, in \cite{YIsmail1997, YN1998}.
From (\ref{a2a}), (\ref{a3}) and integral formulas in the Appendix A, it is found that
$a$ and $b$ satisfy the following algebraic equations,
\bea\label{a7}
\frac{a+b}{2(ab)^{\frac{3}{2}}}t+\frac{\alpha}{\sqrt{ab}}-\frac{\beta}{\sqrt{(1-a)(1-b)}}=0,
\eea
and
\bea\label{a8}
\frac{t}{\sqrt{ab}}-\frac{\beta}{\sqrt{(1-a)(1-b)}}+2n+\alpha+\beta=0.
\eea
Here the parameters $a$ and $b$ determines the end points of the support of the equilibrium density.
\par
Let $X:=1/\sqrt{ab},$ and eliminating $a+b$ from (\ref{a7}) and (\ref{a8}), then $X$ satisfies the quintic,
\bea\label{a10}
\frac{X^{3}t}{2}-\frac{\beta^{2}X^{3}t}{2(tX+2n+\alpha+\beta)^{2}}+\alpha{X}-\frac{Xt}{2}-(2n+\alpha+\beta)=0. \nonumber
\eea
We now state a theorem which describes the large $n$ asymptotic of $P_n(0;t,\alpha,\beta)$, without displaying the detail steps involved, since these are
quite straightforward.
\par

\begin{theorem}
If ${\rm v}(x)=-\ln{w(x)}=\frac{t}{x}-\alpha{\ln{x}}-\beta{\ln(1-x)}$, $x \in [0, 1]$, $t \geq 0,$ $\alpha>0,$ $\beta >0,$ the evaluation at $z=0$ of $S_{1}(z; t, \alpha, \beta)$, $S_{2}(z; t, \alpha, \beta)$, and $P_{n}(z; t, \alpha, \beta)$ given by
\begin{equation}
{\rm exp}\left[-S_{1}(0; t, \alpha, \beta)\right] \sim 2^{-1}(2n+\alpha+\beta)^{\frac{1}{2}}\{2^{-1}t(2n+\alpha+\beta)^{2}\}^{-\frac{1}{6}},
\end{equation}
\begin{align}\label{a117}
{\rm exp}\left[-S_{2}(0; t, \alpha, \beta)\right] \sim &(-1)^{n}4^{-n}(2n+\alpha+\beta)^{\alpha}{\rm exp}\left[\{2^{-1}t(2n+\alpha+\beta)^{2}\}^{\frac{1}{3}}-(2\alpha+\beta){\ln2}\right. \nonumber \\
&\left.+\frac{1}{2}\{2^{-1}t(2n+\alpha+\beta)^{2}\}^{\frac{1}{3}}-\frac{\alpha}{3}\ln{\left(2^{-1}t(2n+\alpha+\beta)^{2}\right)}\right],
\end{align}
and
\begin{align}\label{a118}
P_{n}(0; t, \alpha, \beta) &\sim {\rm exp}\left[-S_{1}(0; t, \alpha, \beta)-S_{2}(0; t, \alpha, \beta)\right] \nonumber \\
&\sim (-1)^{n}4^{-n}n^{\alpha+\frac{1}{2}}2^{-(\beta+\alpha+\frac{1}{2})}{\rm exp}\left[\frac{3}{2}\{2^{-1}t(2n+\alpha+\beta)^{2}\}^{\frac{1}{3}}-\frac{\alpha}{3}\ln{\{2^{-1}t(2n+\alpha+\beta)^{2}\}}\right. \nonumber \\
&\left.-\frac{1}{6}\ln{\{2^{-1}t(2n+\alpha+\beta)^{2}\}}\right].
\end{align}
The above asymptotic estimations are uniform with respect to $t\in(0,t_{0}],$ $0<t_{0}<\infty,$ $\alpha>0,$ $\beta>0,$ $n\rightarrow\infty$ such that $n^{2}t$ is fixed.
\end{theorem}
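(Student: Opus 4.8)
The plan is to substitute $z=0$ into the large-$n$ Coulomb-gas representation \eqref{a112}--\eqref{a114} of $P_n(z)$, which is valid off the support $[a,b]$, and to expand everything in the regime $s:=2n^2t$ fixed. Since the whole computation rests on the edges, I would first solve the algebraic pair \eqref{a7}--\eqref{a8} (equivalently the quintic satisfied by $X:=1/\sqrt{ab}$) asymptotically. Because ${\rm e}^{-t/x}$ turns the soft Jacobi edge at the origin into a hard edge, I expect $a\to0$ and $b\to1$, both at rate $1/n^2$. Writing $a+b=1+o(1)$ and noting $tX\to0$, the quintic degenerates at leading order to the cubic $\xi^3-\alpha\xi^2=s$ for $\xi:=2n\sqrt{ab}$, so that $a\sim\xi^2/(4n^2)$, while \eqref{a8} gives $1-b\sim\beta^2/(4n^2)$. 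I would carry the next correction to each edge, since those feed the $O(1)$ constants in the final exponent.

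Second, evaluating \eqref{a113} at $z=0$ is purely algebraic: $\exp[-S_1(0)]=\tfrac12\big[(b/a)^{1/4}+(a/b)^{1/4}\big]$, dominated by $\tfrac12\,a^{-1/4}$ since $a\to0$ and $b\to1$. Inserting the edge expansion should then reproduce the first displayed estimate of the theorem.

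Third, and this is the real work, I evaluate \eqref{a114} at $z=0$, where $\sqrt{(z-a)(z-b)}=\sqrt{ab}$ and $x-z=x$. The algebraic piece $-n\ln\big(\tfrac{\sqrt{-a}+\sqrt{-b}}{2}\big)^2$ produces, through $(\sqrt{-a}+\sqrt{-b})^2=-(\sqrt a+\sqrt b)^2$, both the sign $(-1)^n$ and the factor $4^{-n}(\sqrt a+\sqrt b)^{2n}$; since $\sqrt a+\sqrt b=1+\tfrac{\xi}{2n}+O(n^{-2})$, its $2n$-th power exponentiates into an ${\rm e}^{\xi}$-type contribution carrying the leading $s^{1/3}$ behaviour. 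What remains is the principal-value integral $\tfrac1{2\pi}\int_a^b\tfrac{{\rm v}(x)}{\sqrt{(b-x)(x-a)}}\big(\tfrac{\sqrt{ab}}{x}+1\big)\,dx$ with ${\rm v}(x)=t/x-\alpha\ln x-\beta\ln(1-x)$. I would split this into its $t/x$, $\ln x$ and $\ln(1-x)$ parts, evaluate each in closed form with the integrals of Appendix A (the same formulas that yielded \eqref{a7}--\eqref{a8}), and expand those closed forms in $n$ with $s$ fixed. Assembling $\exp[-S_1(0)-S_2(0)]$ and collecting powers of $n$ and of $\{2^{-1}t(2n+\alpha+\beta)^2\}$ should give \eqref{a117} and finally \eqref{a118}.

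The main obstacle is precisely this integral. The logarithms $\ln x$ and $\ln(1-x)$ weighted by $(b-x)^{-1/2}(x-a)^{-1/2}$ are delicate because the edges are colliding with the singular points $0$ and $1$ of ${\rm v}$, so leading edge values do not suffice: I must retain the $O(1/n^2)$ corrections to $a$ and to $1-b$ to capture the $O(1)$ terms of the exponent, and I must track how the $\xi$-dependence, with $\xi$ a root of the cubic, recombines across $S_1$ and $S_2$ into the clean $\tfrac32\{2^{-1}t(2n+\alpha+\beta)^2\}^{1/3}-\tfrac{\alpha}{3}\ln\{2^{-1}t(2n+\alpha+\beta)^2\}$ form, in which the $s^{1/3}$ coefficient is independent of $\alpha$. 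As checks I would verify that the $s$-dependent part of \eqref{a118} is independent of $\beta$, and that the ratio $P_n(0;\tfrac{s}{2n^2},\alpha,\beta)/P_n(0;0,\alpha,\beta)$ reproduces the $s$-expansion of $\Delta(s,\alpha+1,\beta)/\Delta(s,\alpha,\beta)$ dictated by \eqref{b30} and \eqref{a134} — this consistency being exactly the route to $c_1(\alpha)$. Finally, the claimed uniformity in $t\in(0,t_0]$, $\alpha>0$, $\beta>0$ follows from the convexity of the potential, ${\rm v}''(x)=2t/x^3+\alpha/x^2+\beta/(x-1)^2>0$ on $(0,1)$, under which the representation \eqref{a112} holds uniformly.
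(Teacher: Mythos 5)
Your route is the one the paper intends: Theorem 6 is stated ``without displaying the detail steps'', and its visible scaffolding --- the Coulomb-fluid formulas (\ref{a112})--(\ref{a114}), the endpoint equations (\ref{a7})--(\ref{a8}) with their quintic, and the Appendix A integrals --- is exactly what you deploy. Your handling of the endpoints is also right: $a\sim\xi^{2}/(4n^{2})$ with $\xi$ the root of the cubic $\xi^{3}-\alpha\xi^{2}=s$, $1-b\sim\beta^{2}/(4n^{2})$, $\exp[-S_{1}(0)]\approx\frac{1}{2}a^{-1/4}$, and the algebraic piece of $S_{2}$ yielding $(-1)^{n}4^{-n}(\sqrt{a}+\sqrt{b})^{2n}\to(-1)^{n}4^{-n}{\rm e}^{\xi}$. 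The step that fails is your evaluation of the integral term of $S_{2}(0)$: you set $\sqrt{(z-a)(z-b)}\,\big|_{z=0}=+\sqrt{ab}$, but the branch that is analytic in $\mathbb{C}\setminus[a,b]$ and behaves like $z$ at infinity --- the very branch you use in the algebraic piece when writing $(\sqrt{-a}+\sqrt{-b})^{2}=-(\sqrt{a}+\sqrt{b})^{2}$, i.e. $\sqrt{-a}=i\sqrt{a}$ --- gives $\sqrt{(z-a)(z-b)}\,\big|_{z=0}=(i\sqrt{a})(i\sqrt{b})=-\sqrt{ab}$, so the bracket in (\ref{a114}) at $z=0$ is $1-\sqrt{ab}/x$, not $1+\sqrt{ab}/x$ (also, no principal value is involved: $z=0$ lies outside $[a,b]$). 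The sign is fatal, not cosmetic. Using (A3)--(A6), the $t/x$ piece of $-S_{2}(0)$ equals $-\frac{t}{2}\left(\frac{1}{\sqrt{ab}}-\frac{a+b}{2ab}\right)=\frac{t(\sqrt{b}-\sqrt{a})^{2}}{4ab}\to+\frac{s}{2\xi^{2}}$, and the $-\alpha\ln x$ piece equals $\alpha\ln\frac{(\sqrt{a}+\sqrt{b})^{2}}{4\sqrt{ab}}\approx+\alpha\ln\frac{n}{2\xi}$; with your sign these become $-\frac{s}{2\xi^{2}}$ and $\alpha\ln\sqrt{ab}\approx-\alpha\ln\frac{2n}{\xi}$. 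Carried through, your plan delivers $\exp[-S_{2}(0)]\propto n^{-\alpha}\exp\left[\frac{1}{2}s^{1/3}+\cdots\right]$ instead of $n^{+\alpha}\exp\left[\frac{3}{2}s^{1/3}+\cdots\right]$: it contradicts (\ref{a117}), and already at $t=0$ it contradicts the exact Jacobi evaluation (\ref{a119}), which forces $P_{n}(0;0,\alpha,\beta)\propto(-1)^{n}4^{-n}n^{\alpha+\frac{1}{2}}$. (Your own proposed consistency check against (\ref{b30}) and (\ref{a134}) would flag this, but the plan as written computes the wrong object.)

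A secondary caveat: the ``recombination'' you hope for is real but only asymptotic in $s$. With the correct signs, the cubic gives the exact identity $\frac{s}{2\xi^{2}}=\frac{\xi-\alpha}{2}$, so the exponent of $P_{n}(0;t,\alpha,\beta)$ collapses to $\frac{3}{2}\xi-\frac{\alpha}{2}-\left(\alpha+\frac{1}{2}\right)\ln\xi$ attached to the prefactor $(-1)^{n}4^{-n}n^{\alpha+\frac{1}{2}}2^{-(\alpha+\beta+\frac{1}{2})}$; since $\xi=s^{1/3}+\frac{\alpha}{3}+O(s^{-1/3})$, this agrees with the theorem's display $\frac{3}{2}s^{1/3}-\frac{2\alpha+1}{6}\ln s$ only up to errors of order $s^{-1/3}$ (the $O(1)$ pieces cancel between the algebraic and the $t/x$ contributions). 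So the stated formulas are the large-$n^{2}t$ reduction of the Coulomb-fluid estimate, and can only be established along your lines in that sense --- which is exactly how the paper uses them in Remark 3, matching against (\ref{b30}) and (\ref{a134}) to extract $c_{1}(\alpha)$.
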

\par
In order to derive the constant $c_{1}(\alpha)$ in $(\ref{a134})$, we still need to obtain $P_{n}(0; 0, \alpha, \beta)$, which is the constant terms of monic polynomials orthogonal
with respect to the ''shifted'' Jacobi weight $w(x;0,\alpha,\beta)=x^{\alpha}(1-x)^{\beta},\;x\in[0,1].$
This can be found from the monic polynomial orthogonal with respect to  Jacobi weight $w(x)=(1-x)^{\alpha}(1+x)^{\beta},$ $x \in [-1, 1].$
Taking a result from \cite{CI2005}, we find that,
\begin{equation}\label{a119}
P_{n}(0;0,\alpha,\beta)=(-1)^{n}\frac{\Gamma(n+1+\alpha)\Gamma(n+1+\alpha+\beta)}{\Gamma(\alpha+1)\Gamma(2n+1+\alpha+\beta)}\sim \frac{(-1)^{n}4^{-n}n^{\alpha+\frac{1}{2}}2^{-(\alpha+\beta+\frac{1}{2})}\sqrt{2\pi}}{\Gamma(\alpha+1)}, \nonumber
\end{equation}
\begin{equation*}
P_{n}(1;0,\alpha,\beta)=\frac{\Gamma(n+1+\beta)\Gamma(n+1+\alpha+\beta)}{\Gamma(\beta+1)\Gamma(2n+1+\alpha+\beta)}\sim \frac{4^{-n}n^{\beta+\frac{1}{2}}2^{-(\alpha+\beta+\frac{1}{2})}\sqrt{2\pi}}{\Gamma(\beta+1)},
\end{equation*}
where use has been made of the asymptotic formula
\begin{equation*}
\Gamma(n+1+\alpha)\sim \sqrt{2\pi}n^{n+\alpha+\frac{1}{2}}e^{-n} \quad {\rm as}, \quad n\rightarrow\infty.
\end{equation*}

{\bf Remark 3:}
To derive the constant $c_{1}(\alpha)$ in (\ref{a134}), we recall the asymptotic estimation of $P_{n}(0; t, \alpha, \beta)$, (\ref{a118}), as
\begin{align}\label{a135}
P_{n}(0; t, \alpha, \beta) &\sim {\rm exp}\left[-S_{1}(0; t, \alpha, \beta)-S_{2}(0; t, \alpha, \beta)\right] \nonumber \\
&\sim \frac{(-1)^{n}4^{-n}n^{\alpha+\frac{1}{2}}2^{-(\alpha+\beta+\frac{1}{2})}\sqrt{2\pi}}{\Gamma(\alpha+1)}\cdot\frac{\Gamma(\alpha+1)}{\sqrt{2\pi}}{\rm exp}\left[\frac{3}{2}\{2^{-1}t(2n+\alpha+\beta)^{2}\}^{\frac{1}{3}}\right.\nonumber\\
&\left.-\frac{\alpha}{3}\ln{\{2^{-1}t(2n+\alpha+\beta)^{2}\}}-\frac{1}{6}\ln{\{2^{-1}t(2n+\alpha+\beta)^{2}\}}\right]\nonumber\\
&\sim \frac{(-1)^{n}4^{-n}n^{\alpha+\frac{1}{2}}2^{-(\alpha+\beta+\frac{1}{2})}\sqrt{2\pi}}{\Gamma(\alpha+1)}{\rm exp}\left[\ln{\frac{\Gamma(\alpha+1)}{\sqrt{2\pi}}}+\frac{3}{2}\{2^{-1}t(2n+\alpha+\beta)^{2}\}^{\frac{1}{3}}\right.\nonumber\\
&\left.-\frac{\alpha}{3}\ln{\{2^{-1}t(2n+\alpha+\beta)^{2}\}}-\frac{1}{6}\ln{\{2^{-1}t(2n+\alpha+\beta)^{2}\}}\right],
\end{align}
Hence,
\begin{align*}
\frac{P_{n}(0; t, \alpha, \beta)}{P_{n}(0; 0, \alpha, \beta)}\sim &{\rm exp}\left(\frac{3}{2}\{2^{-1}t(2n+\alpha+\beta)^{2}\}^{\frac{1}{3}}-\frac{\alpha}{3}\ln{\{2^{-1}t(2n+\alpha+\beta)^{2}\}}\right.\\
&\left.-\frac{1}{6}\ln{\{2^{-1}t(2n+\alpha+\beta)^{2}\}}+c_{1}(\alpha)\right),
\end{align*}
from which $c_{1}(\alpha)$ is found to be
$$\ln{\frac{\Gamma(\alpha+1)}{\sqrt{2\pi}}}.$$
Replacing $2^{-1}t(2n+\alpha+\beta)^{2}$ by $s$, then (\ref{a134}) follows.
\\
By (\ref{aha1}) one finds
$$
c(\alpha+1)-c(\alpha)=c_{1}(\alpha)=\ln{\frac{\Gamma(\alpha+1)}{\sqrt{2\pi}}},
$$
giving
$$
c(\alpha)=\ln\frac{G(\alpha+1)}{(2\pi)^{\frac{\alpha}{2}}},
$$
which we recognize to be the Tracy-Widom constant that appeared in the Bessel kernel problem.

Here $G(z)$ is the Barnes-G function.

\subsection{An algorithm of the Hankel determinant for small $s$.}
In this subsection, we use the formulas derived by Normand in \cite{JMN2004} and similar process in \cite{PFW2006} to compute the Hankel determinant. We look for expansion of
 the Hankel determinant around $t=0$. We scale the variable $t$ as $s=2n^{2}t$ in the expansion and let $n \rightarrow \infty$, in this way
  we verify the asymptotic expansion for small $s$ derived in section 2.
\par
Recall that
\bea\label{a98}
D_{n}(t,\alpha,\beta)={\rm det}\left[\mu_{i+j}(t)\right]_{i,j=0}^{n-1}, \nonumber
\eea
where the moments $\mu_{i+j}(t)$ are given by
\bea\label{a99}
\mu_{i+j}(t)=\int_{0}^{1}x^{i+j}x^{\alpha}(1-x)^{\beta}e^{-\frac{t}{x}}dx, \quad i,j=0,1, \ldots \nonumber
\eea
Although the moments $\mu_{i+j}(t)$ depending on $\alpha$ and $\beta$, we do not display this to lighten notations.
\begin{lemma}
The moments  $\mu_{m}(t)$ are,
\bea\label{aa101}
\mu_{m}(t)=\int_{0}^{1}x^{m}x^{\alpha}(1-x)^{\beta}e^{-\frac{t}{x}}dx=\varphi_{m}(t)+t^{m+\alpha+1}\psi_{m}(t),\quad m \in \mathbb{N},
\eea
where $\varphi_{m}(t)$ and $\psi_{m}(t)$ are analytic at $t=0$,
$$
\varphi_{m}(t)=\frac{e^{-t}\Gamma(\beta+1)\Gamma(m+\alpha+1)}{\Gamma(m+2+\alpha+\beta)}\ _{1}\mathrm{F}_{1}(1+\beta,-m-\alpha,t),
$$
and,
$$
\psi_{m}(t)=e^{-t}\Gamma(-m-\alpha-1)\ _{1}\mathrm{F}_{1}(m+2+\alpha+\beta,m+2+\alpha,t),
$$
where $_1\mathrm{F}_1(a,b;z)$ is the confluent hypergeometric function of the first kind.

Futrthermore, $\mu_{m}(t)$ has series expansion around $t=0$,
\begin{align}\label{aa102}
\mu_{m}(t)&=\varphi_{m}(0)+t\varphi_{m}'(0)+\frac{t^{2}}{2!}\varphi_{m}''(0)+\mathcal{O}(t^{3})+t^{m+\alpha+\lambda+1}\psi_{m}(0)(1+\mathcal{O}(t^{2}))\nonumber\\
&+\mathcal{O}\left(t^{2(m+\alpha+\lambda+1)}\right),
\end{align}
where
\bea\label{aa103}
\varphi_{m}(0)=\frac{\Gamma(\beta+1)\Gamma(m+\alpha+1)}{\Gamma(m+2+\alpha+\beta)},\quad \varphi_{m}'(0)=-\frac{\Gamma(\beta+1)\Gamma(m+\alpha)}{\Gamma(m+1+\alpha+\beta)},
\eea

\bea\label{aa104}
\varphi_{m}''(0)=\frac{\Gamma(\beta+1)\Gamma(m+\alpha-1)}{\Gamma(m+\alpha+\beta)},
\eea
and
\bea\label{a105}
\psi_{m}(0)=\Gamma(-m-\alpha-1). \nonumber
\eea
The expansions are valid for $\alpha>0,$ $\beta>0,$ $m \in \mathbb{N}$ and $|\arg{t}|<\pi.$
\end{lemma}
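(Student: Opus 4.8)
The plan is to recognize the moment $\mu_m(t)$ as a confluent hypergeometric function of the second kind (Tricomi's $U$) and then to read off the decomposition $\varphi_m+t^{m+\alpha+1}\psi_m$ from the standard formula connecting $U$ with two Kummer functions ${}_1F_1$. First I would remove the essential singularity of $e^{-t/x}$ at $x=0$ by the substitution $x=1/(1+v)$, which maps $[0,1]$ onto $[0,\infty)$ and turns the weight into a Laplace-type integrand. Since $t/x=t(1+v)$, $1-x=v/(1+v)$, and $dx=-(1+v)^{-2}dv$, a direct computation gives
$$
\mu_m(t)=e^{-t}\int_0^\infty v^{\beta}(1+v)^{-(m+\alpha+\beta+2)}e^{-tv}\,dv .
$$
This integral converges for $\beta>0$ and $\mathrm{Re}\,t>0$ precisely because the dangerous factor $e^{-t/x}$ has become a harmless exponential decay at $v=\infty$, while $v^\beta$ is integrable at $v=0$.

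Next I would match this against the Euler-type representation $U(a,b,z)=\frac{1}{\Gamma(a)}\int_0^\infty e^{-zv}v^{a-1}(1+v)^{b-a-1}\,dv$, valid for $\mathrm{Re}\,a>0$. Comparing exponents forces $a=\beta+1$ and $b=-(m+\alpha)$, so that $\mu_m(t)=e^{-t}\,\Gamma(\beta+1)\,U(\beta+1,-(m+\alpha),t)$. The two pieces then come for free from the connection formula
$$
U(a,b,z)=\frac{\Gamma(1-b)}{\Gamma(a-b+1)}\,{}_1F_1(a,b,z)+\frac{\Gamma(b-1)}{\Gamma(a)}\,z^{\,1-b}\,{}_1F_1(a-b+1,2-b,z).
$$
With the above parameters the first term, multiplied by $e^{-t}\Gamma(\beta+1)$, reproduces $\varphi_m(t)=\tfrac{e^{-t}\Gamma(\beta+1)\Gamma(m+\alpha+1)}{\Gamma(m+2+\alpha+\beta)}\,{}_1F_1(1+\beta,-m-\alpha,t)$ exactly, since $\Gamma(1-b)=\Gamma(m+\alpha+1)$ and $\Gamma(a-b+1)=\Gamma(m+2+\alpha+\beta)$; the factor $z^{1-b}=t^{m+\alpha+1}$ in the second term supplies the non-analytic power, leaving $\psi_m(t)=e^{-t}\Gamma(-m-\alpha-1)\,{}_1F_1(m+2+\alpha+\beta,m+2+\alpha,t)$ after the $\Gamma(a)=\Gamma(\beta+1)$ cancels. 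Both $\varphi_m$ and $\psi_m$ are entire in $t$, being products of $e^{-t}$ with Kummer functions, which establishes the claimed analyticity at $t=0$.

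Finally, for the small-$t$ expansion I would Taylor expand $\varphi_m$ and $\psi_m$ about $t=0$. The constant terms follow from ${}_1F_1(a,b,0)=1$, giving $\varphi_m(0)=\tfrac{\Gamma(\beta+1)\Gamma(m+\alpha+1)}{\Gamma(m+2+\alpha+\beta)}$ and $\psi_m(0)=\Gamma(-m-\alpha-1)$ at once; the derivatives $\varphi_m'(0)$ and $\varphi_m''(0)$ come from Kummer's rule $\frac{d}{dt}{}_1F_1(a,b,t)=\frac ab\,{}_1F_1(a+1,b+1,t)$ combined with the $e^{-t}$ prefactor, after which the recurrence $\Gamma(z+1)=z\Gamma(z)$ collapses them into $-\tfrac{\Gamma(\beta+1)\Gamma(m+\alpha)}{\Gamma(m+1+\alpha+\beta)}$ and $\tfrac{\Gamma(\beta+1)\Gamma(m+\alpha-1)}{\Gamma(m+\alpha+\beta)}$. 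Adding the two series and keeping the lowest non-analytic power $t^{m+\alpha+1}$ yields (\ref{aa102}). The main obstacle is not any single calculation but the validity of the connection formula: it requires $b=-(m+\alpha)$ to be a non-integer, equivalently $\alpha\notin\mathbb{Z}$, which is exactly what keeps $\Gamma(-m-\alpha-1)$ finite and forces the clean two-term split (for integer $\alpha$ the two hypergeometric solutions coalesce and logarithmic terms intervene). I would therefore state the identity for non-integer $\alpha$ and recover integer cases, where needed, by continuity of $\mu_m(t)$ in $\alpha$, extending the representation from $\mathrm{Re}\,t>0$ to $|\arg t|<\pi$ by analytic continuation of $U$.
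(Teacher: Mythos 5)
Your proposal is correct, and at its core it rests on the same special-function identity as the paper: both express $\mu_m(t)$ as (elementary factor) $\times$ (confluent hypergeometric function of the second kind) and then split it into two Kummer functions by the standard connection formula, which is precisely what produces the analytic piece $\varphi_m$ plus the non-analytic power $t^{m+\alpha+1}$ times the analytic piece $\psi_m$. The difference is in how that representation is obtained. The paper quotes Gradshteyn--Ryzhik (p.~367) to write $\mu_m(t)=t^{(m+\alpha)/2}e^{-t/2}\Gamma(\beta+1)W\bigl(-\tfrac{m+2+\alpha+2\beta}{2},\tfrac{m+1+\alpha}{2},t\bigr)$ and then invokes the Whittaker-to-Kummer expansion; you instead derive the representation from scratch via $x=1/(1+v)$, recognize Euler's integral for Tricomi's $U(\beta+1,-(m+\alpha),t)$, and apply the $U$-to-${}_1F_1$ connection formula. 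Since $W_{\lambda,\mu}(z)=e^{-z/2}z^{\mu+1/2}U(\mu-\lambda+\tfrac12,2\mu+1,z)$ (together with the Kummer transformation $U(a,b,z)=z^{1-b}U(a-b+1,2-b,z)$), your representation and the paper's are literally the same function, and your parameter bookkeeping checks out exactly against $\varphi_m$ and $\psi_m$. What your route buys is self-containedness: the convergence of the Laplace-type integral is visible, no table lookup is needed, and you make explicit the hypothesis the paper leaves implicit in this lemma, namely that $b=-(m+\alpha)$ must be a non-integer (equivalently $\alpha\notin\mathbb{Z}$) for the two-term split to hold and for $\Gamma(-m-\alpha-1)$ to be finite --- a condition the paper only imposes later, in the theorem that uses this lemma. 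One cosmetic point: the exponent $t^{m+\alpha+\lambda+1}$ in the paper's expansion (\ref{aa102}) contains a spurious $\lambda$ (a typo carried over from other notation); your expansion correctly produces $t^{m+\alpha+1}$, consistent with (\ref{aa101}).
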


\begin{proof}
From (\cite{GR2007}, P367), we find
$$
\mu_{m}(t)=\int_{0}^{1}x^{m}x^{\alpha}(1-x)^{\beta}e^{-\frac{t}{x}}dx=t^{\frac{m+\alpha}{2}}e^{-\frac{t}{2}}\Gamma(\beta+1)W\left(-\frac{m+2+\alpha+2\beta}{2},\frac{m+1+\alpha}{2},t\right),
$$
where $W$ denotes the Whittaker function
\begin{align*}
W(\lambda,\mu,z)=&\frac{\Gamma(-2\mu)}{\Gamma(\frac{1}{2}-\mu-\lambda)}z^{\mu+\frac{1}{2}}e^{-\frac{z}{2}}\ _{1}\mathrm{F}_{1}(\mu-\lambda+\frac{1}{2},2\mu+1,z)\\
&+\frac{\Gamma(2\mu)}{\Gamma(\frac{1}{2}+\mu-\lambda)}z^{-\mu+\frac{1}{2}}e^{-\frac{z}{2}}\ _{1}\mathrm{F}_{1}(-\mu-\lambda+\frac{1}{2},-2\mu+1,z),
\end{align*}
subject to $|\arg{z}|<\pi$ (\cite{GR2007}, P1023). Hence $(\ref{aa101})$ and $(\ref{aa102})$ follow.

\end{proof}

\begin{theorem}
The series expansion of the Hankel determinant is given by,
\begin{align}\label{a109}
&D_{n}(t,\alpha,\beta)={\rm det}\left[\mu_{k+j}(t)\right]_{k,j=0}^{n-1}=D_{n}(0,\alpha,\beta)\left[1-\frac{n(n+\alpha+\beta)t}{\alpha}\right.\nonumber\\ 
&\left.+\frac{n(n+\alpha+\beta)(n(n+\alpha+\beta)\alpha+\beta)t^{2}}{2\alpha(\alpha^{2}-1)}+\mathcal{O}\left(t^{3}\right)\right.\nonumber\\
&\left.+\frac{\Gamma(n+\alpha+1)\Gamma(n+1+\alpha+\beta)\pi{t^{\alpha+1}}}{\Gamma(\beta+1)\Gamma(\alpha+1)\Gamma^{2}(\alpha+2){\rm sin}(\alpha{\pi})(n-1)!\prod_{j=1}^{n-1}(\beta+j)}\left(1+\mathcal{O}(t)\right)+\mathcal{O}\left(t^{2(\alpha+1)}\right)\right],
\end{align}
subject to $\alpha \notin \mathbb{Z},$ $\alpha>0$ and $|\arg{t}|<\pi.$ $D_{n}(0,\alpha,\beta)$ has a closed form expression \cite{EC2005},
\begin{align}\label{b28}
D_{n}(0,\alpha,\beta)=4^{-n(n+\alpha+\beta)}&(2\pi)^{n}\frac{\Gamma(\frac{\alpha+\beta+1}{2})G^{2}(\frac{\alpha+\beta+1}{2})G^{2}
(\frac{\alpha+\beta}{2}+1)}{G(\alpha+\beta+1)G(\alpha+1)G(\beta+1)}\nonumber\\
&\times \frac{G(n+1)G(n+\alpha+1)G(n+\beta+1)G(n+\alpha+\beta+1)}{G^{2}(n+\frac{\alpha+\beta+1}{2})G^{2}(n+1+\frac{\alpha+\beta}{2})\Gamma(n+\frac{\alpha+\beta+1}{2})},
\end{align}
where $G(z)$ is the Barnes $G$-function.
\end{theorem}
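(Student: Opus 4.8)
The plan is to obtain the series expansion \eqref{a109} by first expanding each moment $\mu_{k+j}(t)$ around $t=0$ via Lemma~\ref{lemma:2} (the preceding moment lemma), and then propagating these expansions through the determinant $D_n(t,\alpha,\beta)=\det[\mu_{k+j}(t)]_{k,j=0}^{n-1}$. The crucial structural observation from \eqref{aa102} is that each moment splits into an \emph{analytic} part $\varphi_m(t)=\varphi_m(0)+t\varphi_m'(0)+\tfrac{t^2}{2}\varphi_m''(0)+\mathcal{O}(t^3)$ and a \emph{non-analytic} part $t^{m+\alpha+1}\psi_m(t)$. When these are substituted into the determinant, the analytic contributions produce the integer powers $t,t^2,\dots$, while the leading non-analytic term produces the fractional power $t^{\alpha+1}$. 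The two families of terms can therefore be computed independently and then added.

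For the integer-power part I would write $\mu_{k+j}(t)=\varphi_{k+j}(0)\bigl(1+t\,\tfrac{\varphi'_{k+j}(0)}{\varphi_{k+j}(0)}+\tfrac{t^2}{2}\tfrac{\varphi''_{k+j}(0)}{\varphi_{k+j}(0)}+\cdots\bigr)$ and factor the $t=0$ determinant out. Using multilinearity of $\det$, the linear-in-$t$ coefficient is a sum over single-row replacements, and the quadratic coefficient collects single-row second-order and two-row first-order replacements. The matrix $[\varphi_{k+j}(0)]$ is a Hankel matrix of pure-Jacobi moments, so these replacement determinants are ratios of Jacobi-type Hankel determinants that telescope into Gamma-function quotients; the explicit values $\varphi_m(0),\varphi_m'(0),\varphi_m''(0)$ from \eqref{aa103}--\eqref{aa104} then give the coefficients $-\tfrac{n(n+\alpha+\beta)}{\alpha}$ and $\tfrac{n(n+\alpha+\beta)(n(n+\alpha+\beta)\alpha+\beta)}{2\alpha(\alpha^2-1)}$. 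Alternatively, and more cleanly, one may recognize these as logarithmic derivatives of $D_n(t,\alpha,\beta)/D_n(0,\alpha,\beta)$ at $t=0$ and read them off from the small-$t$ behavior of $H_n$ in Theorem~1, cross-checking consistency. The closed form \eqref{b28} for $D_n(0,\alpha,\beta)$ is quoted directly from \cite{EC2005}, so no derivation is needed there.

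For the fractional-power part, the dominant non-analytic contribution to the determinant comes from replacing a single row of $[\varphi_{k+j}(0)]$ by the lowest-order non-analytic pieces $t^{m+\alpha+1}\psi_m(0)$; since the exponent $m+\alpha+1$ is smallest for the smallest available index, the leading term is of order $t^{\alpha+1}$. I would evaluate the resulting cofactor as a bordered Jacobi-Hankel determinant and use $\psi_m(0)=\Gamma(-m-\alpha-1)$ together with the reflection formula $\Gamma(-m-\alpha-1)\Gamma(m+\alpha+2)=\pi/\sin\bigl(\pi(m+\alpha+2)\bigr)=(-1)^{m}\pi/\sin(\alpha\pi)$ to convert the Gamma value into the $\pi/\sin(\alpha\pi)$ factor appearing in \eqref{a109}. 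Assembling the cofactor into Gamma ratios yields the prefactor $\Gamma(n+\alpha+1)\Gamma(n+1+\alpha+\beta)/[\Gamma(\beta+1)\Gamma(\alpha+1)\Gamma^2(\alpha+2)(n-1)!\prod_{j=1}^{n-1}(\beta+j)]$.

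\textbf{The main obstacle} I anticipate is the bookkeeping in the fractional-power part: correctly identifying which single-row replacement dominates, evaluating the associated bordered Hankel cofactor in closed form, and tracking the $(-1)^m$ signs through the reflection formula so that the final coefficient matches \eqref{a109} exactly. The integer-power coefficients, by contrast, are routine once the Jacobi-moment Hankel determinants are expressed as Gamma quotients (or deduced from Theorem~1), and the restriction $\alpha\notin\mathbb{Z}$ is exactly what keeps the $\sin(\alpha\pi)$ denominator and the analytic-versus-fractional separation well defined.
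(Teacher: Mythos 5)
Your proposal follows essentially the same route as the paper's proof: expand each moment via the lemma's analytic/non-analytic splitting, propagate this through the determinant by multilinearity, and evaluate the resulting row-replaced Gamma-ratio determinants in closed form, with the leading $t^{\alpha+1}$ contribution coming from the lowest-index non-analytic entry $\psi_0(0)=\Gamma(-\alpha-1)$ and the reflection formula producing the $\pi/\sin(\alpha\pi)$ factor. The only minor difference is that the paper invokes Normand's formula from \cite{JMN2004} to evaluate those Gamma-ratio determinants, whereas you evaluate the same cofactors directly as shifted Jacobi--Hankel determinants telescoping into Gamma quotients; these are the same computation in different packaging, so your plan is sound.
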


\begin{proof}
By $(\ref{aa102})$,
\begin{align}\label{a103}
D_{n}(t, \alpha, \beta)=&{\rm det}\left[\mu_{k+j}(t)\right]_{k,j=0}^{n-1} \sim {\rm det}\left[\varphi_{k+j}(0)+t\varphi_{k+j}'(0)+t^{2}\frac{\varphi_{k+j}''(0)}{2!}+t^{k+j+\alpha+1}\psi_{k+j}(0)\right]_{k,j=0}^{n-1}\nonumber\\
&\sim {\rm det}\left[\varphi_{k+j}(0)\right]_{k,j=0}^{n-1}+t<t>{\rm det}\left[\varphi_{k+j}(0)+t\varphi_{k+j}'(0)\right]_{k,j=0}^{n-1}\nonumber\\
&+t^{2}<t^{2}>{\rm det}\left[\varphi_{k+j}(0)+t\varphi_{k+j}'(0)+t^{2}\frac{\varphi_{k+j}''(0)}{2!}\right]_{k,j=0}^{n-1}\nonumber\\
&+t^{\alpha+1}<t^{\alpha+1}>{\rm det}\left[\varphi_{k+j}(0)+t^{k+j+\alpha+1}\psi_{k+j}(0)\right]_{k,j=0}^{n-1}.
\end{align}
Here $<t^{m}>f(t)$ denotes the coefficient of $t^{m}$ in the series expansion of $f(t)$ in $t.$ We note here Normand's formula
\bea\label{a101}
{\rm det}\left[\frac{\Gamma(z_{j}+i)}{\Gamma(az_{j}+b+i)}\right]_{i,j=0}^{n-1}=\prod_{j=0}^{n-1}\frac{(b+(n-1-j)(1-a))_{j}\Gamma(z_{j})}{\Gamma(az_{j}+b+n-1)}\prod_{0\leq i<j \leq N-1}(z_{j}-z_{i}), \nonumber
\eea
which can be found in \cite{JMN2004}, and
\bea\label{a102}
{\rm det}\left[\mathbf{a}_{1},\cdots,{\mathbf{a}_{j}+\mathbf{b}_{j}},\cdots{\mathbf{a}_{n}}\right]={\rm det}\left[\mathbf{a}_{1},\cdots,{\mathbf{a}_{j}},\cdots,{\mathbf{a}_{n}}\right]+{\rm det}\left[\mathbf{a}_{1},\cdots,{\mathbf{b}_{j}},\cdots,{\mathbf{a}_{n}}\right],
\eea
where $\mathbf{a}_{j}$, $\mathbf{b}_{j}$ are column vectors. We obtain, after an extensive computations, while bearing in mind
$$
D_{n}(0,\alpha,\beta)={\rm det}\left[\varphi_{k+j}(0)\right]_{k,j=0}^{n-1},
$$
the following
\begin{align}
<t>{\rm det}\left[\varphi_{k+j}(0)+t\varphi_{k+j}'(0)\right]_{k,j=0}^{n-1}=&<t>{\rm det}\left[\frac{\Gamma(\beta+1)\Gamma(\alpha+1+j+k)}{\Gamma(2+\alpha+\beta+j+k)}\right.\nonumber\\
&\left.-t\frac{\Gamma(\beta+1)\Gamma(\alpha+j+k)}{\Gamma(1+\alpha+\beta+j+k)}\right]_{k,j=0}^{n-1}. \nonumber
\end{align}
With  $(\ref{a102})$, we find,
$$
<t>{\rm det}\left[\frac{\Gamma(\beta+1)\Gamma(\alpha+1+j+k)}{\Gamma(2+\alpha+\beta+j+k)}-t\frac{\Gamma(\beta+1)\Gamma(\alpha+j+k)}{\Gamma(1+\alpha+\beta+j+k)}\right]_{k,j=0}^{n-1}=-\frac{n(n+\alpha+\beta)t}{\alpha}D_{n}(0,\alpha,\beta),
$$
The terms $<t^{2}>f(t)$ and $<t^{\alpha+1}$ can be similarly derived, although with greater effort.
\end{proof}

\begin{cor}
From the series expansion of $D_{n}(t, \alpha, \beta)$ around $t=0$, see $(\ref{a109})$,
the series expansion of $H_{n}(t,\alpha,\beta)$ around $t=0$ follows,
\begin{align}\label{aa28}
H_{n}(t,\alpha,\beta)&=t\frac{d}{dt}\ln{\frac{D_{n}(t,\alpha,\beta)}{D_{n}(0,\alpha,\beta)}}
=-\frac{n(n+\alpha+\beta)}{\alpha}t+\frac{n(n+\alpha+\beta)(n(n+\alpha+\beta)+\alpha\beta)}{\alpha^{2}(\alpha^{2}-1)}t^{2}\nonumber\\ &+\mathcal{O}(t^{3})+t^{\alpha+1}\frac{\Gamma(n+\alpha+1)\Gamma(n+1+\alpha+\beta)\pi}{(\alpha+1)^{2}\Gamma^{3}(\alpha+1){\rm sin}(\alpha{\pi})\Gamma(n)\Gamma(n+\beta)}\left(1+\mathcal{O}(t)\right)+\mathcal{O}\left(t^{2(\alpha+1)}\right),
\end{align}
subject to $\alpha \notin \mathbb{Z},$ $\alpha>0,$ $t>0$ and $|\arg{t}|<\pi.$
\end{cor}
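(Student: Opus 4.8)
The plan is to feed the series (\ref{a109}) for $D_n(t,\alpha,\beta)$ directly into the definition
\[
H_n(t,\alpha,\beta)=t\frac{d}{dt}\ln\frac{D_n(t,\alpha,\beta)}{D_n(0,\alpha,\beta)},
\]
so the whole task reduces to taking the logarithm of a known expansion and then applying the Euler operator $t\,d/dt$ term by term. Writing $\frac{D_n(t,\alpha,\beta)}{D_n(0,\alpha,\beta)}=1+A(t)$, the expansion (\ref{a109}) splits $A$ into an \emph{analytic} part $c_1 t+c_2 t^2+\mathcal{O}(t^3)$ carrying integer powers of $t$, and a \emph{non-analytic} part $d\,t^{\alpha+1}(1+\mathcal{O}(t))+\mathcal{O}(t^{2(\alpha+1)})$ carrying the shifted powers $t^{\alpha+1},t^{\alpha+2},\dots$, where $c_1=-n(n+\alpha+\beta)/\alpha$, $c_2$ is the quadratic coefficient displayed in (\ref{a109}), and $d$ is its $t^{\alpha+1}$ coefficient. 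Since $A$ has no constant term, $A(0)=0$ and $\ln(1+A)$ is legitimately expandable.

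Next I apply $\ln(1+A)=A-\tfrac12 A^2+\tfrac13 A^3-\cdots$. The structural point, which relies on the hypothesis $\alpha\notin\mathbb{Z}$, is that the integer powers $\{t,t^2,\dots\}$ and the shifted powers $\{t^{\alpha+1},t^{\alpha+2},\dots\}$ form disjoint families, so they may be collected independently up to the orders retained. For the integer powers, the usual expansion of $\ln(1+c_1 t+c_2 t^2+\cdots)$ produces $c_1 t+(c_2-\tfrac12 c_1^2)t^2+\mathcal{O}(t^3)$. For the shifted powers, the only source of $t^{\alpha+1}$ is the linear term $A$ itself: the cross product $(c_1 t)(d\,t^{\alpha+1})$ yields $t^{\alpha+2}$ and $(d\,t^{\alpha+1})^2$ yields $t^{2(\alpha+1)}$, both strictly beyond the recorded order. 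Hence the leading shifted term of $\ln(1+A)$ is simply $d\,t^{\alpha+1}$, with the mixed contributions absorbed into the factor $(1+\mathcal{O}(t))$ and the square into $\mathcal{O}(t^{2(\alpha+1)})$.

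Finally, applying $t\,d/dt$ multiplies the coefficient of each power $t^{\gamma}$ by $\gamma$: the $t^1$ coefficient stays $c_1$, the $t^2$ coefficient becomes $2\bigl(c_2-\tfrac12 c_1^2\bigr)=2c_2-c_1^2$, and the $t^{\alpha+1}$ coefficient acquires the factor $\alpha+1$. It then remains to reduce these to the form (\ref{aa28}). For the quadratic term one puts $2c_2-c_1^2$ over the common denominator $\alpha^2(\alpha^2-1)$ and checks that the $n^2(n+\alpha+\beta)^2$ contributions cancel, leaving $n(n+\alpha+\beta)\bigl(n(n+\alpha+\beta)+\alpha\beta\bigr)/[\alpha^2(\alpha^2-1)]$, exactly as stated. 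For the shifted term one collapses the products in the denominator of (\ref{a109}) using $\Gamma(\alpha+2)=(\alpha+1)\Gamma(\alpha+1)$, $(n-1)!=\Gamma(n)$, and $\Gamma(\beta+1)\prod_{j=1}^{n-1}(\beta+j)=\Gamma(n+\beta)$. The one genuinely delicate point — essentially the only place that needs care — is the bookkeeping in the logarithm that justifies treating the integer and $t^{\alpha+1}$ families separately and confirms that every mixed product lands beyond the order being tracked; once that is granted, what is left is a routine coefficient comparison together with the Gamma-function tidy-up.
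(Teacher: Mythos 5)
Your route is exactly the paper's own: insert (\ref{a109}) into $H_{n}=t\frac{d}{dt}\ln[D_{n}(t,\alpha,\beta)/D_{n}(0,\alpha,\beta)]$, expand the logarithm with the integer powers and the $t^{\alpha+1}$-shifted powers kept as disjoint families (legitimate for $\alpha\notin\mathbb{Z}$), apply the Euler operator, and tidy the Gamma factors; this is precisely the paper's one-line proof spelled out. Your handling of the integer part is correct: with your $c_{1},c_{2}$ (the $t$ and $t^{2}$ coefficients of (\ref{a109})), the combination $2c_{2}-c_{1}^{2}$ does collapse to $n(n+\alpha+\beta)\bigl(n(n+\alpha+\beta)+\alpha\beta\bigr)/[\alpha^{2}(\alpha^{2}-1)]$, in agreement with (\ref{aa28}) and with what the $\sigma$-form (\ref{b2}) forces on the $t^{2}$ coefficient.

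The genuine problem sits in the step you dismiss as routine: the identification of the $t^{\alpha+1}$ term. As you say, the Euler operator multiplies the coefficient $d$ of $t^{\alpha+1}$ in (\ref{a109}) by $(\alpha+1)$. But your own Gamma-function collapse, $\Gamma^{2}(\alpha+2)=(\alpha+1)^{2}\Gamma^{2}(\alpha+1)$, $(n-1)!=\Gamma(n)$, $\Gamma(\beta+1)\prod_{j=1}^{n-1}(\beta+j)=\Gamma(n+\beta)$, then gives
\begin{equation*}
(\alpha+1)\,d=\frac{\pi\,\Gamma(n+\alpha+1)\,\Gamma(n+1+\alpha+\beta)}{(\alpha+1)\,\Gamma^{3}(\alpha+1)\,\sin(\alpha\pi)\,\Gamma(n)\,\Gamma(n+\beta)},
\end{equation*}
with a single factor $(\alpha+1)$ downstairs, whereas (\ref{aa28}) displays $(\alpha+1)^{2}$; indeed the coefficient printed in (\ref{aa28}) is exactly $d$ itself, i.e.\ what one obtains by forgetting the factor $(\alpha+1)$ you had just (correctly) introduced. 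So a faithful execution of your plan does \emph{not} land on the statement as printed — it lands on $(\alpha+1)$ times its shifted term — and your assertion of exact agreement conceals this. Nor is the mismatch an algebra slip on your side that could be repaired: at $n=1$, Lemma 2 gives the $t^{\alpha+1}$ coefficient of $D_{1}(t)/D_{1}(0)$ as $\Gamma(-\alpha-1)\Gamma(\alpha+\beta+2)/[\Gamma(\alpha+1)\Gamma(\beta+1)]$, so the $t^{\alpha+1}$ coefficient of $H_{1}$ is $(\alpha+1)$ times this, which by the reflection formula equals $\pi\Gamma(\alpha+\beta+2)/[\sin(\alpha\pi)\Gamma^{2}(\alpha+1)\Gamma(\beta+1)]$ — precisely your $(\alpha+1)d$ at $n=1$, and $(\alpha+1)$ times the $n=1$ value of (\ref{aa28}). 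In short: your method is sound and identical to the paper's, but the corollary as stated carries a spurious extra factor $(\alpha+1)$ in the denominator of the $t^{\alpha+1}$ term (which then propagates to (\ref{aa29})); a correct write-up must end by exhibiting the coefficient displayed above and flagging the discrepancy, rather than claiming the printed formula is reproduced exactly.
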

\begin{proof}
With $H_{n}(t, \alpha, \beta)$ given by $(\ref{b42})$, the series expansion of $D_{n}(t, \alpha, \beta)$, equation $(\ref{a109})$,
and the identity
$$
\frac{\Gamma(n+\beta)}{\Gamma(\beta+1)}=\prod_{j=1}^{n-1}(\beta+j),
$$
the equation $(\ref{aa28})$ is obtained.
\end{proof}

\begin{cor}
Sending $t\rightarrow 0^{+},$ $n\rightarrow \infty,$ and $s=2n^{2}t$ such that $s\in(0,\infty)$ is finite, we have,
\begin{align}\label{aa29}
{\cal H}(s,\alpha,\beta)=&\lim_{n \rightarrow \infty}s\frac{d}{ds}\ln{\frac{D_{n}(\frac{s}{2n^{2}},\alpha,\beta)}{{D_{n}(0,\alpha,\beta)}}}
=s\frac{d}{ds}\ln{\Delta(s,\alpha,\beta)}=-\frac{s}{2\alpha}+\frac{s^{2}}{4\alpha^{2}(\alpha^{2}-1)}+\mathcal{O}(s^{3})\nonumber\\
&+\frac{{\pi}s^{\alpha+1}}{2^{\alpha+1}(\alpha+1)^{2}\Gamma^{3}(\alpha+1){\rm sin}(\alpha{\pi})}(1+\mathcal{O}(s))+\mathcal{O}(s^{2(\alpha+1)}),
\end{align}
subject to $\alpha \notin \mathbb{Z},$ $\alpha>0$ and $|\arg{s}|<\pi.$
\end{cor}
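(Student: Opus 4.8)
The plan is to read off $(\ref{aa29})$ directly from the finite-$n$ expansion $(\ref{aa28})$ of the preceding Corollary by substituting the double-scaling relation $t=s/(2n^{2})$ and then letting $n\to\infty$ term by term. I would first recall from Theorem 3 that ${\cal H}(s,\alpha,\beta)=\lim_{n\to\infty}H_{n}(s/(2n^{2}),\alpha,\beta)$ and that, via $(\ref{b9})$, this same limit equals $s\frac{d}{ds}\ln\Delta(s,\alpha,\beta)$; hence it suffices to compute the $n\to\infty$ limit of each summand of $(\ref{aa28})$ after the replacement $t=s/(2n^{2})$.

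For the two analytic (integer-power) terms this is an elementary limit of ratios of polynomials in $n$. The linear term $-\frac{n(n+\alpha+\beta)}{\alpha}t$ becomes $-\frac{s}{2\alpha}\cdot\frac{n+\alpha+\beta}{n}\to-\frac{s}{2\alpha}$, while the quadratic term carries $t^{2}=s^{2}/(4n^{4})$ against a numerator growing like $n^{4}$, giving $\frac{s^{2}}{4\alpha^{2}(\alpha^{2}-1)}$; the $\mathcal{O}(t^{3})$ remainder rescales to $\mathcal{O}(s^{3})$. This reproduces the analytic part of $(\ref{aa29})$, and I would note that it coincides with $s\frac{d}{ds}$ applied to the formal small-$s$ expansion $(\ref{b27})$, as it must.

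The genuinely new contribution is the fractional-power term $t^{\alpha+1}C(n)\left(1+\mathcal{O}(t)\right)$ with $C(n)=\frac{\Gamma(n+\alpha+1)\Gamma(n+1+\alpha+\beta)\pi}{(\alpha+1)^{2}\Gamma^{3}(\alpha+1)\sin(\alpha\pi)\Gamma(n)\Gamma(n+\beta)}$. Here I would invoke the classical ratio asymptotics $\Gamma(n+a)/\Gamma(n+b)\sim n^{a-b}$, which give $\Gamma(n+\alpha+1)/\Gamma(n)\sim n^{\alpha+1}$ and $\Gamma(n+1+\alpha+\beta)/\Gamma(n+\beta)\sim n^{\alpha+1}$, so that $C(n)\sim n^{2(\alpha+1)}\cdot\frac{\pi}{(\alpha+1)^{2}\Gamma^{3}(\alpha+1)\sin(\alpha\pi)}$. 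Writing $t^{\alpha+1}=s^{\alpha+1}/(2^{\alpha+1}n^{2(\alpha+1)})$, the powers of $n$ cancel exactly and leave $\frac{\pi s^{\alpha+1}}{2^{\alpha+1}(\alpha+1)^{2}\Gamma^{3}(\alpha+1)\sin(\alpha\pi)}$, precisely the non-analytic term of $(\ref{aa29})$.

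The step I expect to be the main obstacle is controlling the remainder factors through the limit — in particular showing that the $1+\mathcal{O}(t)$ riding on the $t^{\alpha+1}$ family survives as $1+\mathcal{O}(s)$, and that the interchange of $n\to\infty$ with the asymptotic summation is legitimate. Concretely, one must check that the coefficient of the next correction $t^{\alpha+2}$ grows like $n^{2(\alpha+2)}$, so that under $t=s/(2n^{2})$ it contributes a genuine $\mathcal{O}(s)$ relative correction (neither blowing up nor vanishing), and similarly that the $\mathcal{O}(t^{2(\alpha+1)})$ tail rescales to $\mathcal{O}(s^{2(\alpha+1)})$. Granting the uniformity built into the moment expansion $(\ref{aa102})$ (valid for $\alpha>0$, $\alpha\notin\mathbb{Z}$ and $|\arg t|<\pi$), each rescaled coefficient possesses a finite limit and the term-by-term passage is justified, yielding $(\ref{aa29})$.
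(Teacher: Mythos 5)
Your proposal is correct and takes essentially the same approach as the paper: substitute $t=s/(2n^{2})$ into the finite-$n$ expansion (\ref{aa28}), pass to the limit term by term, and use the Gamma-ratio asymptotics (the paper writes $\Gamma(n+\alpha+1)\sim\Gamma(n+1)n^{\alpha}$, equivalent to your $\Gamma(n+a)/\Gamma(n+b)\sim n^{a-b}$) so that the $n^{2(\alpha+1)}$ growth of the fractional-power coefficient exactly cancels $t^{\alpha+1}=s^{\alpha+1}/(2^{\alpha+1}n^{2(\alpha+1)})$. Your closing discussion of the uniformity of the $1+\mathcal{O}(t)$ remainders is a refinement the paper passes over silently, not a different argument.
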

\begin{proof}
Substituting $s=2n^{2}t$ into $(\ref{aa28}),$ and taking limit $n \rightarrow \infty$, then $(\ref{aa29})$ is obtained with
\begin{align*}
\lim_{n \rightarrow \infty}\frac{s^{\alpha+1}}{2^{\alpha+1}n^{2(\alpha+1)}}\frac{\Gamma(n+\alpha+1)\Gamma(n+1+\alpha+\beta)\pi}{(\alpha+1)^{2}\Gamma^{3}(\alpha+1){\rm sin}(\alpha{\pi})\Gamma(n)\Gamma(n+\beta)}=\frac{{\pi}s^{\alpha+1}}{2^{\alpha+1}(\alpha+1)^{2}\Gamma^{3}(\alpha+1){\rm sin}(\alpha{\pi})},
\end{align*}
where $\alpha \notin \mathbb{Z},$ $\alpha>0$ and $\Gamma(n+\alpha+1) \sim \Gamma(n+1)n^{\alpha},$ as $n\rightarrow \infty,$ is used in the proof.
\end{proof}

{\bf Remark.}
Note that the small $s$ expansion of  ${\cal H}(s,\alpha,\beta)$ is independent of $\beta$ and coincides with the characterization of Painlev\'{e} equation in the Theorem 3.
In particular, the ``first portion"  of $(\ref{aa29})$ is the same as the series expansion,
\begin{align*}
{\cal H}(s,\alpha,\beta)&=s\frac{d}{ds}\ln{\Delta(s,\alpha,\beta)}=\frac{\left(sg'-g\right)^{2}}{4g^{2}}+\frac{4{\alpha}sg-s^{2}}{16g^{2}}-g-\frac{\alpha^{2}}{4}\\
&=-\frac{1}{2\alpha}s+\frac{1}{4\alpha^{2}(\alpha^{2}-1)}s^{2}-\frac{1}{2\alpha^{3}(\alpha^{2}-4)(\alpha^{2}-1)}s^{3}+\mathcal{O}({s^{4}}),
\end{align*}
where the small expansion $s$ of $g(s,\alpha,\beta)$ is given by $(\ref{a72})$. We note that the ``special function portion" of $(\ref{aa29})$, cannot be obtained through power series.
 This method,  proposed by Normand,  capture both ``special functions" and  ``power series". T complexity in the computations increases very rapidly, as one attempt to
 include term with higher powers of $s$ and should be a worthy future project to explore.

\section{Double scaling analysis \uppercase\expandafter{\romannumeral2}.}
In this section, we propose another double scaling scheme by setting $\alpha=-2n+\lambda,$ $s=nt,$ as $n\rightarrow \infty,$ $t\rightarrow 0$ such that $\lambda$ and $s$ are finite, the scaled determinant has an integral representation in terms of a particular ${\rm P_{V}},$ which can be reduced to a ${\rm P_{III}},$
from which its expansions for small and large $s$ can be found.

\subsection{The Painlev\'{e} equations.}
For convenience, we introduce a number of items;
\begin{equation}\label{A1}
t\frac{d}{dt}\ln{\frac{\Delta_{n}(t,\alpha,\beta)}{\Delta_{n}(0,\alpha,\beta)}}:=t\frac{d}{dt}\ln{\frac{D_{n}(t,\alpha,\beta)}{D_{n}(0,\alpha,\beta)}}+n^{2}-n\lambda,\;\;{\rm and\;\;} \widehat{H}_{n}(t,\alpha,\beta):=H_{n}(t,\alpha,\beta)+n^{2}-n\lambda,
\end{equation}
where
$$
H_{n}(t,\alpha,\beta):=t\frac{d}{dt}\ln{\frac{D_{n}(t,\alpha,\beta)}{D_{n}(0,\alpha,\beta)}}.
$$
\\
We proceed as follows.
\\
Substituting $\alpha=-2n+\lambda$ into (\ref{A1}), we find,
$$
\Delta_{n}(t,\lambda,\beta)=\frac{t^{n^{2}-n\lambda}}{n!}D_{n}(t,\lambda,\beta)=\frac{t^{n^{2}-n\lambda}}{n!}\int_{(0,1)^{n}}\prod_{1\leq j<k\leq n}\left(x_{j}-x_{k}\right)^{2}\prod_{i=1}^{n}x_{l}^{-2n+\lambda}\left(1-x_{l}\right)^{\beta}e^{-\frac{t}{x_{l}}}dx_{l}.
$$
With the change of variable
$
z=\frac{(1-x)t}{x},
$
mapping the interval $[0,1]$ to $[0,\infty]$, one finds,
\bea\label{c01}
\widehat{D}_{n}(t,\lambda,\beta)=\frac{(-1)^{n}}{n!}\int_{(0,\infty)^{n}}\prod_{1\leq j<k\leq n}\left(z_{k}-z_{j}\right)^{2}\prod_{\ell=1}^{n}(z_{\ell}+t)^{-(\lambda+\beta)}z_{\ell}^{\beta}e^{-z_{\ell}-t}dz_{\ell},
\eea
and
\begin{equation*}
(-1)^{n}\Delta_{n}(0,\lambda)=\frac{G(n+1)G(n+1-\lambda)}{G(-\lambda)},
\end{equation*}
where $\lambda<0.$

This leads us to study the Hankel determinant originates generated by the weight
$$
w(x;t)=(x+t)^{-(\lambda+\beta)}x^{\beta}{\rm e}^{-x-t}, \quad \lambda<0,\quad \beta>0, \quad x\in (0, \infty).
$$
We would like to consider a more general weight of the form,
\begin{equation}\label{DD36}
w(x;t)=(x+t)^{-(\lambda+\beta)}x^{\gamma}e^{-x-t}, \quad \lambda<0, \beta>0, \gamma>0,\gamma-\lambda-\beta>-1, x\in (0, \infty),
\end{equation}
which the reduces to the one above (\ref{DD36}), if $\gamma=\beta$ and degenerates to the Laguerre weight at $t=0.$
\par
We introduce two quantities in term of the monic polynomials $P_{n}(x)$ orthogonal with respect to (\ref{DD36}), over $[0,\infty).$
These are
\begin{equation}\label{D9}
\widehat{R}_{n}(t):=\frac{\gamma}{h_{n}}\int_{0}^{\infty}\frac{P_{n}^{2}(y)}{y}w(y;t)dy,
\end{equation}
\begin{equation}\label{D10}
\widehat{r}_{n}(t):=\frac{\gamma}{h_{n-1}}\int_{0}^{\infty}\frac{P_{n-1}(y)P_{n}(y)}{y}w(y;t)dy.
\end{equation}
Moreover, we find $\widehat{r}_{n}$ and $\widehat{R}_{n}$ satisfy the following Riccati equations,
\begin{equation}\label{D26}
t\frac{d}{dt}\widehat{r}_{n}=\left(2n-\lambda-\beta+\gamma\right)\widehat{r}_{n}+n(n-\lambda-\beta)+\frac{\widehat{r}_{n}(\widehat{r}_{n}-\gamma)}
{\widehat{R}_{n}}
-\frac{\widehat{r}_{n}(\widehat{r}_{n}+2n-\lambda-\beta)+n(n-\lambda-\beta)}{1-\widehat{R}_{n}},
\end{equation}
\begin{equation}\label{D27}
t\frac{d}{dt}\widehat{R}_{n}=2\widehat{r}_{n}-\gamma+\left(2n+\gamma-\beta-\lambda+t(\widehat{R}_{n}-1)\right)\widehat{R}_{n}.
\end{equation}

Let
\begin{equation}\label{D25}
\widehat{H}_{n}(t,\gamma,\lambda,\beta)
:=t\frac{d}{dt}\ln{\frac{\widehat{D}_{n}(t,\gamma,\lambda,\beta)}{\widehat{D}_{n}(0,\gamma,\lambda,\beta)}}
-t\sum_{j=0}^{n-1}R_{j},
\end{equation}
where the derivation of $-t\sum_{j=0}^{n-1}R_j(t),$ obtained through ladder operators, is not re-produce here.
\\
Note that,
\begin{equation*}
(-1)^{n}\Delta_{n}(0,\gamma,\lambda,\beta)=\frac{G(n+1)G(n+1+\gamma-\beta-\lambda)}{G(\gamma-\beta-\lambda)}.
\end{equation*}
and a simply identity
\begin{equation}\label{D30}
\widehat{r}_{n}(t)=\widehat{H}_{n}'(t).
\end{equation}

\begin{theorem}
For finite $n$, let
$$
\widehat{R}_{n}(t)=:1+\frac{1}{\widehat{y}(t,\gamma,\lambda,\beta)-1},
$$
then $\widehat{y}(t,\gamma,\lambda,\beta)$ satisfies
{\small
\begin{equation}\label{D28}
\widehat{y}''=\frac{(3\widehat{y}-1)\left(\widehat{y}'\right)^{2}}{2\widehat{y}(\widehat{y}-1)}
-\frac{\widehat{y}'}{t}+\frac{\left(\widehat{y}-1\right)^{2}}{2t^{2}}\left((\beta+\lambda)^{2}\widehat{y}-\frac{\gamma^{2}}{\widehat{y}}\right)-\frac{\left(2n+1+\gamma-\beta-\lambda\right)\widehat{y}}{t}-\frac{\widehat{y}\left(\widehat{y}+1\right)}{2(\widehat{y}-1)},
\end{equation}
}
with the initial conditions
$$
\widehat{y}(0,\gamma,\lambda,\beta)=\frac{\gamma}{\beta+\lambda}, \quad \widehat{y}'(0,\gamma,\lambda,\beta)=\frac{\gamma(2n+1+\gamma-\beta-\lambda)}{(\beta+\lambda)((\beta+\lambda-\gamma)^{2}-1)}.
$$
The quantity $\widehat{H}_{n}(t,\gamma,\lambda,\beta)$ defined in $(\ref{D25}),$ and satisfies the following second order ODE,
\begin{equation}\label{D29}
\left(t\widehat{H}_{n}''\right)^{2}=4\left(n+\widehat{H}_{n}'\right)\left(\widehat{H}_{n}-t\widehat{H}_{n}'\right)
\left(\widehat{H}_{n}'-\gamma\right)+\left((t-\gamma+\beta+\lambda)\widehat{H}_{n}'-\widehat{H}_{n}-n\gamma\right)^{2},
\end{equation}
with the initial conditions
$$
\widehat{H}_{n}(0,\gamma,\lambda,\beta)=0, \quad \widehat{H}_{n}'(0,\gamma,\lambda,\beta)=\frac{n\gamma}{\beta+\lambda-\gamma}.
$$
Moreover,
\begin{equation}\label{D31}
\widehat{H}_{n}(t,\gamma,\lambda,\beta):=\frac{\left(t\widehat{y}'\right)^{2}}{4\widehat{y}(\widehat{y}-1)^{2}}
-\frac{\left((\beta+\lambda)\widehat{y}-\gamma\right)^{2}}{4\widehat{y}}+\frac{t(\beta+\lambda-2n)\widehat{y}}{2(\widehat{y}-1)}
-\frac{\gamma{t}}{2(\widehat{y}-1)}-\frac{t^{2}\widehat{y}}{4(\widehat{y}-1)^2}.
\end{equation}
\end{theorem}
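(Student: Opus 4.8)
The plan is to treat the two Riccati equations $(\ref{D26})$ and $(\ref{D27})$ as a coupled first-order system for the pair $(\widehat{r}_n,\widehat{R}_n)$, and to combine them with the parametrization $\widehat{R}_n=\widehat{y}/(\widehat{y}-1)$ and the identity $(\ref{D30})$, $\widehat{r}_n=\widehat{H}_n'$. The three assertions are then, respectively, the Painlev\'e equation $(\ref{D28})$ for $\widehat{y}$, the $\sigma$-form $(\ref{D29})$ for $\widehat{H}_n$, and the algebraic bridge $(\ref{D31})$ between them; I would obtain each by eliminating one auxiliary variable at a time.

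First I would derive $(\ref{D28})$. Since $(\ref{D27})$ is \emph{linear} in $\widehat{r}_n$, I solve it to get $\widehat{r}_n=\tfrac12\left[t\widehat{R}_n'+\gamma-\bigl(2n+\gamma-\beta-\lambda+t(\widehat{R}_n-1)\bigr)\widehat{R}_n\right]$, expressing $\widehat{r}_n$ rationally through $\widehat{R}_n$ and $t\widehat{R}_n'$. Under the substitution $\widehat{R}_n=\widehat{y}/(\widehat{y}-1)$ one has $\widehat{R}_n'=-\widehat{y}'/(\widehat{y}-1)^2$, $1/\widehat{R}_n=(\widehat{y}-1)/\widehat{y}$ and $1/(1-\widehat{R}_n)=-(\widehat{y}-1)$, so $\widehat{r}_n$ becomes a function of $\widehat{y},\widehat{y}',t$. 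I would then form $t\widehat{r}_n'$ by differentiation (this is where $\widehat{y}''$ appears) and equate it with the right-hand side of $(\ref{D26})$ rewritten through the same substitution. Clearing the $\widehat{y}$ and $(\widehat{y}-1)$ denominators and collecting terms should collapse the identity to $(\ref{D28})$, and reading off its coefficients identifies it as the stated ${\rm P_V}$.

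Next I would establish $(\ref{D31})$ and $(\ref{D29})$. For $(\ref{D31})$ I use $\widehat{H}_n'=\widehat{r}_n$: differentiating the proposed right-hand side of $(\ref{D31})$ in $t$ and removing $\widehat{y}''$ by means of $(\ref{D28})$ should reproduce exactly the expression for $\widehat{r}_n$ found above, while matching at $t=0$, where both sides vanish because $\widehat{H}_n(0)=0$, fixes the constant. For the $\sigma$-form $(\ref{D29})$ I would eliminate the auxiliary variable outright: $(\ref{D31})$ gives $\widehat{H}_n$ and $(\ref{D30})$ gives $\widehat{H}_n'=\widehat{r}_n$, both as explicit functions of $\widehat{y},\widehat{y}'$, whereas $\widehat{H}_n''=\widehat{r}_n'$ is supplied by $(\ref{D26})$; viewing these three relations as equations in the two quantities $\widehat{y},\widehat{y}'$ and eliminating them yields a single polynomial relation among $\widehat{H}_n,\widehat{H}_n',\widehat{H}_n'',t$, which I expect to be precisely $(\ref{D29})$.

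Finally, the initial data come from the $t\to0^{+}$ limit. Letting $t\to0$ in $(\ref{D27})$ annihilates every term carrying an explicit factor $t$ and gives $0=2\widehat{r}_n(0)-\gamma+(2n+\gamma-\beta-\lambda)\widehat{R}_n(0)$; coupling this with the direct evaluation of $\widehat{R}_n(0)$ from $(\ref{D9})$ against the Laguerre weight $x^{\gamma-\lambda-\beta}{\rm e}^{-x}$, which yields $\widehat{R}_n(0)=\gamma/(\gamma-\beta-\lambda)$ and hence $\widehat{y}(0)=\gamma/(\beta+\lambda)$, returns $\widehat{r}_n(0)=\widehat{H}_n'(0)=n\gamma/(\beta+\lambda-\gamma)$; the derivative values $\widehat{y}'(0)$ and the remaining data then follow from differentiating the Riccati system once more at $t=0$. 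The main obstacle is entirely computational: the elimination steps generate bulky rational expressions in $\widehat{y},\widehat{y}'$ (and in $\widehat{r}_n,\widehat{R}_n$), and the genuine work is to confirm that, after the $\widehat{y}$ and $(\widehat{y}-1)$ denominators are cleared, they collapse \emph{exactly} to the coefficients displayed in $(\ref{D28})$, $(\ref{D29})$ and $(\ref{D31})$, with every $n,\gamma,\beta,\lambda$-dependent term landing in its stated place.
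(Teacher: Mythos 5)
Your proposal is correct and, at its core, runs on the same machinery as the paper: both treat (\ref{D26})--(\ref{D27}) as a coupled Riccati system, solve the linear-in-$\widehat{r}_n$ equation (\ref{D27}) for $\widehat{r}_n$, substitute into (\ref{D26}) under $\widehat{R}_n=\widehat{y}/(\widehat{y}-1)$ to obtain (\ref{D28}), and get the $\sigma$-form by eliminating the auxiliary pair --- your pair $(\widehat{y},\widehat{y}')$ is birationally equivalent to the paper's pair $(\widehat{r}_n,\widehat{R}_n)$, so the two eliminations are the same computation in different coordinates. The genuine difference lies in how the bridge formula (\ref{D31}) is obtained. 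The paper derives it constructively: it invokes the ladder-operator sum rule (\ref{D20}) for $\sum_{j}\widehat{R}_j$ together with the definition (\ref{D25}), which yields a closed expression for $\widehat{H}_n$ in terms of $(\widehat{r}_n,\widehat{R}_n)$, and then converts to $(\widehat{y},\widehat{y}')$ via (\ref{D27}). You bypass (\ref{D20}) entirely: since (\ref{D30}) gives $\widehat{H}_n'=\widehat{r}_n$ and $\widehat{H}_n(0)=0$, it suffices to check that the right-hand side of (\ref{D31}) vanishes at $t=0$ (it does, because $\widehat{y}(0)=\gamma/(\beta+\lambda)$ kills the single term lacking an explicit factor of $t$) and that its $t$-derivative, after removing $\widehat{y}''$ by (\ref{D28}), reproduces the expression for $\widehat{r}_n$ coming from (\ref{D27}). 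This verification route is more self-contained --- it needs only (\ref{D26}), (\ref{D27}), (\ref{D30}) and the initial data, not the sum rule whose derivation the paper declines to reproduce --- at the price of having to be handed (\ref{D31}) in advance, whereas the paper's route produces that formula without knowing it beforehand; your ordering (prove (\ref{D31}) first, then use it plus (\ref{D26}), (\ref{D30}) to eliminate and land on (\ref{D29})) is correspondingly reversed relative to the paper's. Your treatment of the initial data (evaluating $\widehat{R}_n(0)$ from (\ref{D9}) against the Laguerre weight $x^{\gamma-\lambda-\beta}e^{-x}$, which gives $\widehat{R}_n(0)=\gamma/(\gamma-\beta-\lambda)$, hence $\widehat{y}(0)=\gamma/(\beta+\lambda)$ and $\widehat{r}_n(0)=n\gamma/(\beta+\lambda-\gamma)$) is exactly what the theorem's stated conditions require; just note that setting $t=0$ in (\ref{D27}), and differentiating the system at $t=0$ to extract $\widehat{y}'(0)$, presupposes that $\widehat{R}_n'$ and $\widehat{r}_n'$ remain bounded as $t\to0^{+}$, the same regularity assumption the paper makes silently.
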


\begin{proof}
Eliminating $\widehat{r}_{n}$ from $(\ref{D26})$ and $(\ref{D27})$, substituting $\widehat{R}_{n}(t)=1+1/(\widehat{y}(t)-1)$ into the resulting equation, we see that $\widehat{y}(t)$ satisfies $(\ref{D28}).$
\\
Eliminating $\hat{r}_n$ from  $(\ref{D30})$ and $(\ref{D26}),$ we obtain the following
\begin{equation*}
\widehat{R}_{n}=F(\widehat{H}_{n},\widehat{H}_{n}',\widehat{H}_{n}''),
\end{equation*}
where $F(\cdot,\cdot,\cdot)$  is a function of three variables, which we do not display.
\\
Combining the expression for $R_n,$ $(\ref{D25}),$ $(\ref{D30})$, and
\begin{equation}\label{D20}
\sum_{j=0}^{n-1}\widehat{R}_{j}=\frac{\widehat{R}_{n}\left(n(n-\lambda)+(\gamma-\beta)n
+t\widehat{r}_{n}\right)-\widehat{r}_{n}(t+\lambda+\beta-\gamma-2n)-n\gamma}{t(1-\widehat{R}_{n})}
+\frac{\widehat{r}_{n}(\widehat{r}_{n}-\gamma)}{t\widehat{R}_{n}(1-\widehat{R}_{n})},
\end{equation}
we arrive at the non-linear second order ODE which is satisfied by $\widehat{H}_{n}(t,\gamma,\lambda,\beta).$
\par
Finally, combining $(\ref{D20})$ and $(\ref{D25})$, we see that,
\begin{equation*}
\widehat{H}_{n}(t,\gamma,\lambda,\beta)=-\frac{\widehat{R}_{n}\left(n(n-\lambda)+(\gamma-\beta)n+t\widehat{r}_{n}\right)
-\widehat{r}_{n}(t+\lambda+\beta-\gamma-2n)-n\gamma}{(1-\widehat{R}_{n})}-\frac{\widehat{r}_{n}(\widehat{r}_{n}-\gamma)}{\widehat{R}_{n}
(1-\widehat{R}_{n})}.
\end{equation*}
The equation $(\ref{D31})$ is found, with $(\ref{D27})$ and $\widehat{R}_{n}(t)=1+1/(\widehat{y}(t)-1)$.
\end{proof}

{\bf Remark 4:}
If $\widehat{H}_{n}(t,\gamma,\lambda,\beta)\rightarrow \widehat{H}_{n}(t,\gamma,\lambda,\beta)-nt-n(\beta+\lambda)$, then the non-linear second ODE (\ref{D29}) becomes,
\begin{equation*}
\left(t\widehat{H}_{n}''\right)^{2}=\left(\widehat{H}_{n}-t\widehat{H}_{n}'+2\right(\widehat{H}_{n}'\left)^{2}-\delta_{n}\widehat{H}_{n}'\right)^{2}-4\widehat{H}_{n}'\left(\widehat{H}_{n}'-n\right)\left(\widehat{H}_{n}'-\beta-\lambda\right)\left(\widehat{H}_{n}'-n-\gamma\right),
\end{equation*}
where $\delta_{n}=2n+\gamma+\beta+\lambda.$ The equation above is the Jimbo-Miwa-Okamoto \cite{JM1981,O1981} $\sigma$-form of
${\rm P_{V}}$, with parameters 
$$
v_{0}=0,\quad v_{1}=-n, \quad v_{2}=-\beta-\lambda, \quad v_{3}=-n-\gamma.
$$
Similar to the investigation in previous sections, let,
\begin{align}\label{D35}
\widehat{{\cal H}}\left(s,\gamma\right):=\lim_{n \rightarrow \infty}\widehat{H}_{n}\left(\frac{s}{n},\gamma,\lambda,\beta\right),  \quad \Delta(s,\gamma)=\lim_{n \rightarrow \infty}\frac{\Delta_{n}\left(\frac{s}{n},\gamma,\lambda,\beta\right)}{\Delta_{n}\left(0,\gamma,\lambda,\beta\right)},\quad \widehat{g}(s,\gamma):=\lim_{n \rightarrow \infty}\widehat{y}\left(\frac{s}{n},\gamma,\lambda,\beta\right),
\end{align}
we have,
\begin{theorem}
Let $s:=nt,$ $t \rightarrow 0^{+},$ $n\rightarrow \infty,$ such that $s$ is finite. Then $\widehat{g}(s,\gamma)$ satisfies
\begin{equation}\label{D32}
\widehat{g}''=\frac{3\widehat{g}-1}{2\widehat{g}(\widehat{g}-1)}\left(\widehat{g}'\right)^{2}-\frac{\widehat{g}'}{s}
+\frac{(\beta+\lambda)^{2}\widehat{g}\left(\widehat{g}-1\right)^{2}}{2s^{2}}-\frac{\gamma^{2}\left(\widehat{g}-1\right)^{2}}
{2s^{2}\widehat{g}}-\frac{2\widehat{g}}{s},
\end{equation}
with the initial data
$$
\widehat{g}(0,\gamma)=\frac{\gamma}{\beta+\lambda}, \quad \widehat{g}'(0,\gamma)=\frac{2\gamma}{(\beta+\lambda)((\beta+\lambda-\gamma)^{2}-1)}.
$$
The quantity $\widehat{{\cal H}}(s,\gamma)$ satisfies a non-linear second order ODE,
\begin{equation}\label{D33}
\left(s\widehat{{\cal H}}''\right)^{2}=4{\widehat{{\cal H}}'}\left(1+\widehat{{\cal H}}'\right)\left({\widehat{{\cal H}}}-s\widehat{{\cal H}}'\right)+\left(\left(-\gamma+\beta+\lambda\right){\widehat{{\cal H}}'}-\gamma\right)^{2},
\end{equation}
with the initial conditions
$$
\widehat{{\cal H}}(0,\gamma)=0,\quad \widehat{{\cal H}}'(0,\gamma)=\frac{\gamma}{\beta+\lambda-\gamma}.
$$
Moreover,
\begin{equation}\label{D34}
\widehat{{\cal H}}=\frac{(s\widehat{y}')^{2}}{4\widehat{y}\left(\widehat{y}-1\right)^{2}}-\frac{\left((\beta+\lambda)\widehat{y}-\gamma\right)^{2}}{4\widehat{y}}-\frac{s\widehat{y}}{\widehat{y}-1}.
\end{equation}
The equation (\ref{D32}) is satisfied by ${\rm P}_V((\beta+\lambda)^2/2,-\gamma^2/2,-2,0)).$
\end{theorem}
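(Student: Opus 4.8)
The plan is to reuse, in spirit verbatim, the three-step template of the proof of Theorem 3, but now fed by the finite-$n$ relations $(\ref{D28})$, $(\ref{D29})$ and $(\ref{D31})$ of Theorem 10 in place of $(\ref{b3})$, $(\ref{b2})$ and $(\ref{b4})$. The only mechanism is the substitution $t=s/n$ combined with the chain rule: writing the finite-$n$ solution as $\widehat{y}(t)\simeq\widehat{g}(nt)$, one has to leading order $\widehat{y}'(t)\simeq n\,\widehat{g}'(s)$ and $\widehat{y}''(t)\simeq n^{2}\widehat{g}''(s)$, and likewise $\widehat{H}_n'(t)\simeq n\,\widehat{{\cal H}}'(s)$, $\widehat{H}_n''(t)\simeq n^{2}\widehat{{\cal H}}''(s)$. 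After this change of variable each identity becomes a polynomial in $n$, and the task is simply to isolate the top power of $n$ in every term and let $n\to\infty$.

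First I would produce $(\ref{D32})$. Substituting $t=s/n$ into $(\ref{D28})$ makes the left side $n^{2}\widehat{g}''$; on the right the $(\widehat{y}')^{2}$ term, the $-\widehat{y}'/t$ term and the $(\widehat{y}-1)^{2}/(2t^{2})(\cdots)$ term are each exactly $O(n^{2})$ and pass, after division by $n^{2}$, to $\tfrac{3\widehat{g}-1}{2\widehat{g}(\widehat{g}-1)}(\widehat{g}')^{2}$, $-\widehat{g}'/s$ and $\tfrac{(\widehat{g}-1)^{2}}{2s^{2}}((\beta+\lambda)^{2}\widehat{g}-\gamma^{2}/\widehat{g})$. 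In the term $-(2n+1+\gamma-\beta-\lambda)\widehat{y}/t$ only the product $2n\cdot(n/s)$ survives at order $n^{2}$, giving $-2\widehat{g}/s$, while the constants $1+\gamma-\beta-\lambda$ leave an $O(n)$ remainder and the final term $-\widehat{y}(\widehat{y}+1)/(2(\widehat{y}-1))$ is $O(1)$; both die in the limit, leaving precisely $(\ref{D32})$. The boundary data transfer directly: $\widehat{g}(0)=\widehat{y}(0)=\gamma/(\beta+\lambda)$, and from $\widehat{y}'(0)\simeq n\,\widehat{g}'(0)$ together with the large-$n$ asymptotics $2n\gamma/[(\beta+\lambda)((\beta+\lambda-\gamma)^{2}-1)]$ of the finite-$n$ value of $\widehat{y}'(0)$, one reads off $\widehat{g}'(0)=2\gamma/[(\beta+\lambda)((\beta+\lambda-\gamma)^{2}-1)]$.

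Next I would treat the $\sigma$-form $(\ref{D33})$ by the same substitution in $(\ref{D29})$. The left side $(t\widehat{H}_n'')^{2}=(s\,n\,\widehat{{\cal H}}'')^{2}$ is $O(n^{2})$. On the right, $n+\widehat{H}_n'=n(1+\widehat{{\cal H}}')$ and $\widehat{H}_n'-\gamma=n\widehat{{\cal H}}'-\gamma$ are each $O(n)$, and since $t\widehat{H}_n'=s\widehat{{\cal H}}'$ is $O(1)$, the triple product is $O(n^{2})$ with limiting value $4\widehat{{\cal H}}'(1+\widehat{{\cal H}}')(\widehat{{\cal H}}-s\widehat{{\cal H}}')$; the care here is to verify that the $-\gamma$ correction in the last factor is genuinely subleading. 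In the squared bracket, collecting powers of $n$ gives $n[(-\gamma+\beta+\lambda)\widehat{{\cal H}}'-\gamma]+[s\widehat{{\cal H}}'-\widehat{{\cal H}}]$, whose square equals $n^{2}[(-\gamma+\beta+\lambda)\widehat{{\cal H}}'-\gamma]^{2}+O(n)$. Dividing by $n^{2}$ and passing to the limit yields $(\ref{D33})$, with $\widehat{{\cal H}}(0)=0$ from $\widehat{H}_n(0)=0$ and $\widehat{{\cal H}}'(0)=\gamma/(\beta+\lambda-\gamma)$ from $\widehat{H}_n'(0)/n\to\gamma/(\beta+\lambda-\gamma)$.

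Finally, $(\ref{D34})$ follows by scaling $(\ref{D31})$ term by term: the first two terms are already $O(1)$ and converge directly, the third term $t(\beta+\lambda-2n)\widehat{y}/(2(\widehat{y}-1))$ retains only its $-2n$ part against $t=s/n$ to give $-s\widehat{g}/(\widehat{g}-1)$, and the last two terms carry explicit factors $t$ and $t^{2}$ and vanish as $s/n\to0$. To identify $(\ref{D32})$ with a canonical ${\rm P_V}$ one compares coefficients: the $(\widehat{g}')^{2}$ coefficient is $\tfrac{3\widehat{g}-1}{2\widehat{g}(\widehat{g}-1)}=\tfrac{1}{2\widehat{g}}+\tfrac{1}{\widehat{g}-1}$, the $(\widehat{g}-1)^{2}/s^{2}$ bracket furnishes the first two parameters $(\beta+\lambda)^{2}/2$ and $-\gamma^{2}/2$, the $\widehat{g}/s$ term the third parameter $-2$, and the absence of any $\widehat{g}(\widehat{g}+1)/(\widehat{g}-1)$ term forces the fourth to vanish, giving ${\rm P_V}((\beta+\lambda)^{2}/2,-\gamma^{2}/2,-2,0)$. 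The main obstacle is bookkeeping rather than ideas: in the $\sigma$-form step one must confirm that after the exact $O(n^{2})$ balance all $O(n)$ cross-terms cancel or remain subleading, so that no spurious term is inherited in the limit; the rigorous justification that $\widehat{y}(s/n)$ and its derivatives converge to an honest solution of $(\ref{D32})$ is taken in the same formal sense as in Theorem 3.
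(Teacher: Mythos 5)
Your proposal is correct and is essentially the paper's own proof: the paper simply states ``Inserting $s=nt$ into (\ref{D28}), (\ref{D29}), (\ref{D31}) and followed by the double scaling limit,'' and your term-by-term bookkeeping of the powers of $n$ (with $\widehat{y}'\simeq n\widehat{g}'$, $\widehat{H}_n'\simeq n\widehat{{\cal H}}'$, etc.) is exactly the computation that sentence compresses. Your added checks — the cancellation of the $O(n)$ cross-terms in the $\sigma$-form step, the scaling of the initial data, and the coefficient comparison identifying (\ref{D32}) as ${\rm P_V}((\beta+\lambda)^2/2,-\gamma^2/2,-2,0)$ — are details the paper leaves implicit, not a different route.
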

\begin{proof}
Inserting $s=nt$ into $(\ref{D28}),$ $(\ref{D29}),$ $(\ref{D31})$ and followed by the double scaling limit,  the equations $(\ref{D32}),$ $(\ref{D33})$ and $(\ref{D34})$ are found.
\end{proof}
{\bf Remark 5:}
If $\widehat{{\cal H}}(s,\gamma)\rightarrow \widehat{{\cal H}}(s,\gamma)-s,$ then $\widehat{{\cal H}}'(s,\gamma)\rightarrow \widehat{{\cal H}}'(s,\gamma)-1$
and the non-linear second order ODE of (\ref{D33}) becomes
\begin{equation*}
\left(s\widehat{{\cal H}}''\right)^{2}=4\widehat{{\cal H}}'\left(\widehat{{\cal H}}-s\widehat{{\cal H}}'\right)\left(\widehat{{\cal H}}'-1\right)+\left(\left(-\gamma+\beta+\lambda\right)\widehat{{\cal H}}'-\beta-\lambda\right)^2,
\end{equation*}
This turns out to be a $\sigma$-form of ${\rm P_{III}}$.

{\bf Remark 6:} Substituting $\widehat{H}_{n}(t,\lambda,\beta)=H_{n}(t,\lambda,\beta)+n^{2}-n\lambda$ and $s=nt$ into the $\sigma$-from Painlev\'{e} equation $(\ref{b2}),$ we see that
 $\widehat{{\cal H}}\left(s,\lambda,\beta\right)$ satisfies another $\sigma$-form of ${\rm P_{III}}$,
\begin{align}\label{a33}
(s\widehat{{\cal H}}^{''})^{2}=4\widehat{{\cal H}}^{'}(1+\widehat{{\cal H}}^{'})(\widehat{{\cal H}}-s\widehat{{\cal H}}^{'})+(\lambda\widehat{{\cal H}}^{'}-\beta)^{2}.
\end{align}

\subsection{The asymptotic Expansions of $P_V((\beta+\lambda)^2/2,-\gamma^2/2,-2,0)$}

For small and positive $s$, we find,
\begin{align}\label{aa13}
\widehat{g}(s)=&\frac{\gamma}{\beta+\lambda}+\frac{2\gamma}{(\beta+\lambda)((\beta+\lambda-\gamma)^{2}-1)}s\nonumber\\
&+\frac{((\beta+\lambda)^{2}-1)(3(\beta+\lambda)-5\gamma)+\gamma^{2}(\beta+\lambda+\gamma)}{(\beta+\lambda)(\beta+\lambda-\gamma)
((\beta+\lambda-\gamma)^{2}-1)
((\beta+\lambda-\gamma)^{2}-4)}s^{2}+\mathcal{O}({s^{3}}),
\end{align}
where $\beta+\lambda-\gamma \notin \mathbb{Z}.$
\par
{\bf The solution of ${\rm P_{V}\left((\beta+\lambda)^{2}/2, -\gamma^{2}/2, -2, 0\right)}$ for large $s$.}
\par
For large and positive $s,$ we find,
\begin{align}\label{4a2}
\widehat{g}(s)=&\frac{2}{\beta+\lambda}s^{\frac{1}{2}}+1+\frac{4\gamma^{2}-1}{16(\beta+\lambda)}s^{-\frac{1}{2}}+\frac{1-4\gamma^{2}}{16}s^{-1}+\frac{(4\gamma^{2}-1)(48(\beta+\lambda)^{2}-4\gamma^{2}+25)}{1024(\beta+\lambda)}s^{-\frac{3}{2}}\nonumber\\
&+\frac{(4\gamma^{2}-1)(4\gamma^{2}-8(\beta+\lambda)^{2}-17)}{256}s^{-2}+\mathcal{O}({s^{-\frac{5}{2}}}).
\end{align}
\begin{theorem}
The scaled Hankel determinant generated by the Pollaczek-Jacobi type weigh has the following small and large $s$ expansions:
\\
For small $s$,
\begin{align}\label{aa00}
\widehat{\Delta}(s,\gamma)=&\exp\left(\frac{\gamma}{\beta+\lambda-\gamma}s+\frac{\gamma(\beta+\lambda)}{2(\beta+\lambda-\gamma)^{2}((\beta+\lambda-\gamma)^{2}-1)}s^{2}\right.\nonumber\\
&\left.+\frac{2\gamma(\beta+\lambda)(\beta+\lambda+\gamma)}{3(\beta+\lambda-\gamma)^{3}((\beta+\lambda-\gamma)^{2}-4)((\beta+\lambda-\gamma)^{2}-1)}s^{3}+ \mathcal{O}({s^{4}})\right),
\end{align}
where $\beta+\lambda-\gamma \notin \mathbb{Z}.$
\\
For large $s$,
\begin{align}\label{bb00}
\widehat{\Delta}(s,\gamma)=&\exp\left(c_{2}-s-2(\beta+\lambda){s^{\frac{1}{2}}}+\frac{(\beta+\lambda)(\beta+\lambda-2\gamma)}{4}\ln{s}+\frac{(4\gamma^{2}-1)(\beta+\lambda)}{16}s^{-\frac{1}{2}}\right.\nonumber\\
&\left.-\frac{(4\gamma^{2}-1)(\beta+\lambda)^{2}}{64}s^{-1}+\mathcal{O}(s^{-\frac{3}{2}})\right),
\end{align}
where $c_{2}=c_{2}(\gamma)$ is an integration constant independent of $s$.
\end{theorem}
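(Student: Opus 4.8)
The plan is to reduce both expansions to a single quadrature, exactly as in the proof of Theorem 4 via (\ref{b9}). The first step is to record the identity $\widehat{{\cal H}}(s,\gamma)=s\frac{d}{ds}\ln\widehat{\Delta}(s,\gamma)$; this is the scaled form of the finite-$n$ definition (\ref{D25}) of $\widehat{H}_{n}$, and follows by passing to the double scaling limit $t=s/n$, $n\to\infty$ as in (\ref{D35}), the term $-t\sum_{j}R_{j}$ together with the $t^{n^{2}-n\lambda}$ prefactor in $\Delta_{n}$ conspiring to make the right-hand side an exact logarithmic $s$-derivative. Using $\widehat{\Delta}(0,\gamma)=1$ and $\widehat{{\cal H}}(0,\gamma)=0$ one then integrates to obtain
\[
\ln\widehat{\Delta}(s,\gamma)=\int_{0}^{s}\frac{\widehat{{\cal H}}(\xi,\gamma)}{\xi}\,d\xi,
\]
so that everything is driven by the expansions of $\widehat{g}$ in (\ref{aa13}) and (\ref{4a2}) through the closed form (\ref{D34}) for $\widehat{{\cal H}}$ in terms of $\widehat{g}$ and $\widehat{g}'$.

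For the small-$s$ statement (\ref{aa00}) I would substitute the convergent series (\ref{aa13}) into (\ref{D34}), noting that $\widehat{g}(0,\gamma)=\gamma/(\beta+\lambda)$ makes the factor $(\beta+\lambda)\widehat{g}-\gamma$ vanish at the origin; consequently the potentially singular term $-((\beta+\lambda)\widehat{g}-\gamma)^{2}/(4\widehat{g})$ is $O(\xi^{2})$ and $\widehat{{\cal H}}(\xi,\gamma)=O(\xi)$, consistent with the value $\widehat{{\cal H}}'(0,\gamma)=\gamma/(\beta+\lambda-\gamma)$ from Theorem 8. Writing $\widehat{{\cal H}}(\xi,\gamma)=\sum_{k\ge1}a_{k}\xi^{k}$, term-by-term integration of $\widehat{{\cal H}}/\xi$ gives $\ln\widehat{\Delta}=\sum_{k\ge1}(a_{k}/k)s^{k}$; the leading coefficient reproduces the $\gamma/(\beta+\lambda-\gamma)$ term of (\ref{aa00}), and the next two coefficients, obtained after routine algebra, produce the $s^{2}$ and $s^{3}$ coefficients, the restriction $\beta+\lambda-\gamma\notin\mathbb{Z}$ entering through the resonant denominators of (\ref{aa13}).

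For the large-$s$ statement (\ref{bb00}) the same recipe applies with the asymptotic series (\ref{4a2}) inserted into (\ref{D34}) and reorganized into descending half-integer powers of $\xi$. The crucial bookkeeping is the coefficient of $\xi^{0}$ in $\widehat{{\cal H}}(\xi,\gamma)$: it equals $(\beta+\lambda)(\beta+\lambda-2\gamma)/4$, and upon division by $\xi$ and integration it is precisely this term that generates the logarithm $\frac{(\beta+\lambda)(\beta+\lambda-2\gamma)}{4}\ln s$, while the $\xi^{-1}$, $\xi^{-1/2}$ and higher descending terms integrate to $-s$, $-2(\beta+\lambda)s^{1/2}$ and the remaining corrections. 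I expect the main obstacle to be this large-$s$ step: one must integrate an asymptotic, rather than convergent, series, so the lower limit $\xi=0$ cannot be invoked and an $s$-independent integration constant $c_{2}(\gamma)$ is inevitably left undetermined by the quadrature alone. Fixing $c_{2}(\gamma)$ would require a separate argument in the spirit of Section 2.4—matching to the exact finite-$n$ determinant (\ref{c01}) through a Barnes $G$-function/Szeg\"o type asymptotic—which lies outside the present statement; here $c_{2}(\gamma)$ is simply named.
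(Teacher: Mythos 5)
Your proposal follows essentially the same route as the paper's own proof: the paper likewise combines (\ref{D25}), (\ref{D35}) and (\ref{D34}) into the quadrature $\ln\widehat{\Delta}(s,\gamma)=\int_{0}^{s}\widehat{{\cal H}}(\xi,\gamma)\,\frac{d\xi}{\xi}$ and then substitutes the small-$s$ and large-$s$ expansions (\ref{aa13}) and (\ref{4a2}) of $\widehat{g}$ to obtain (\ref{aa00}) and (\ref{bb00}). Your extra observations---the cancellation of the apparently singular term $-\left((\beta+\lambda)\widehat{g}-\gamma\right)^{2}/(4\widehat{g})$ at $\xi=0$ in the small-$s$ case, and the fact that the large-$s$ quadrature of an asymptotic series necessarily leaves $c_{2}(\gamma)$ undetermined (it is fixed only later, in Section 3.3)---are correct refinements of the paper's terser argument, not deviations from it.
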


\begin{proof}
From $(\ref{D25}),$ $(\ref{D35})$ and $(\ref{D34})$, we obtain,
\begin{align*}
\ln{\widehat{\Delta}(s,\gamma)}&=\int_{0}^{s}\left(\frac{(\xi{\widehat{g}'(\xi)})^{2}}{4\widehat{g}(\xi)\left(\widehat{g}(\xi)-1\right)^{2}}
-\frac{\left((\beta+\lambda)\widehat{g}(\xi)-\gamma\right)^{2}}{4\widehat{g}(\xi)}
-\frac{\xi{\widehat{g}(\xi)}}{\widehat{g}(\xi)-1}\right)\frac{d\xi}{\xi}.
\end{align*}
Substituting (\ref{aa13}) and (\ref{4a2}) into the above formula, the equations (\ref{aa00}) and (\ref{bb00}) are found.
\end{proof}
At the end of this section, we evaluate the large $n$ behavior of $P_{n}(0;t,\gamma).$ In fact,
$$
(-1)^{n}P_{n}(0;t,\gamma)=\frac{D_{n}(t,\gamma+1)}{D_{n}(t,\gamma)}.
$$
For the weight,
\begin{equation*}
w(x;0)=x^{\gamma-(\lambda+\beta)}e^{-x}, \quad \gamma-\lambda-\beta>-1, \quad x\in (0, \infty),
\end{equation*}
the constant term has a closed form expression in terms of Gamma functions,
\begin{equation}\label{D47}
(-1)^{n}P_{n}(0;0,\gamma)=\frac{\Gamma(n+1+\gamma-\lambda-\beta)}{\Gamma(1+\gamma-\lambda-\beta)}\sim \sqrt{2\pi}\:
n^{n+\gamma-\alpha-\beta+1/2}\:{\rm e}^{-n},\;\;\;n\to\infty.
\end{equation}

\begin{cor}
With the double scaling, 
 one finds,
\begin{align}\label{D48}
\lim_{n\rightarrow\infty}\frac{(-1)^{n}P_{n}(0;t,\gamma)}{(-1)^{n}P_{n}(0;0,\gamma)}=\frac{\widehat{\Delta}(s,\gamma+1)}{\widehat{\Delta}
(s,\gamma)}=&\exp\left(c_{3}(\gamma)+\frac{\beta+\lambda}{2}\ln{s}+\frac{(1+2\gamma)(\beta+\lambda)}{4}s^{-\frac{1}{2}}\right.\nonumber\\
&\left.-\frac{(1+2\gamma)(\beta+\lambda)^{2}}{16}s^{-1}+\mathcal{O}(s^{-\frac{3}{2}})\right)
\end{align}
where $c_{3}(\gamma)$ is a constant independent of $s.$ Moreover,
\begin{equation}\label{D49}
c_{3}(\gamma)=c_{2}(\gamma+1)-c_{2}(\gamma),
\end{equation}
where $c_{2}(\gamma)$ is the constant in $(\ref{bb00}).$
\end{cor}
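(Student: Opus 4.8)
The plan is to follow the route already used for the earlier corollary, (\ref{a134})--(\ref{aha1}), now transported to the second regime $s=nt$, $n\to\infty$, $t\to 0^{+}$, $\alpha=-2n+\lambda$. The input is the Heine-type identity for the constant term recalled just above the statement,
\[
(-1)^{n}P_{n}(0;t,\gamma)=\frac{D_{n}(t,\gamma+1)}{D_{n}(t,\gamma)}.
\]
Dividing by its value at $t=0$ and regrouping the four determinants turns the left-hand side of (\ref{D48}) into a double ratio of normalized determinants,
\[
\frac{(-1)^{n}P_{n}(0;t,\gamma)}{(-1)^{n}P_{n}(0;0,\gamma)}=\frac{D_{n}(t,\gamma+1)/D_{n}(0,\gamma+1)}{D_{n}(t,\gamma)/D_{n}(0,\gamma)},
\]
one factor carrying the shifted parameter $\gamma+1$ and one the parameter $\gamma$.

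First I would pass from the bare ratios $D_{n}(t,\gamma)/D_{n}(0,\gamma)$ to the renormalized objects $\Delta_{n}$ that actually obey the Painlev\'e description. By the defining relation (\ref{A1}), the logarithmic derivatives of $\Delta_{n}(t,\cdot)/\Delta_{n}(0,\cdot)$ and of $D_{n}(t,\cdot)/D_{n}(0,\cdot)$ differ only by the additive constant $n^{2}-n\lambda$; integrating, the two normalized determinants differ by a factor built from $t^{\,n^{2}-n\lambda}$ together with the $\gamma$-independent constants fixed by the closed forms for $\Delta_{n}(0,\gamma,\lambda,\beta)$. Since this discrepancy depends on $\lambda$ but \emph{not} on $\gamma$, it cancels identically between the numerator and the denominator of the double ratio. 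Consequently the double ratio is unchanged if every $D_{n}$ is replaced by the corresponding $\Delta_{n}$, and letting $n\to\infty$ with $s=nt$ fixed, the definition of $\widehat{\Delta}$ in (\ref{D35}) gives the middle equality of (\ref{D48}),
\[
\lim_{n\to\infty}\frac{(-1)^{n}P_{n}(0;t,\gamma)}{(-1)^{n}P_{n}(0;0,\gamma)}=\frac{\widehat{\Delta}(s,\gamma+1)}{\widehat{\Delta}(s,\gamma)}.
\]
It then remains to insert the large-$s$ expansion (\ref{bb00}) at $\gamma+1$ and at $\gamma$ and subtract. The $\gamma$-independent leading terms $-s$ and $-2(\beta+\lambda)s^{1/2}$ drop out; the integration constants combine into $c_{3}(\gamma)=c_{2}(\gamma+1)-c_{2}(\gamma)$, which is (\ref{D49}); and the surviving coefficients of $\ln s$, $s^{-1/2}$ and $s^{-1}$ follow by term-by-term subtraction, using $4(\gamma+1)^{2}-4\gamma^{2}=4(2\gamma+1)$, reproducing the expansion displayed in (\ref{D48}).

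The genuinely delicate point is not this final algebra but the two reductions in the middle step. I would have to verify carefully that the renormalizing factor relating $D_{n}$ to $\Delta_{n}$ is truly $\gamma$-independent, so that the shift $x^{\gamma}\mapsto x^{\gamma+1}$ is absorbed by the \emph{same} $\lambda$-dependent prefactor and cancels, and that the limit $n\to\infty$ commutes with forming the ratio, so that the finite-$n$ double ratio converges to $\widehat{\Delta}(s,\gamma+1)/\widehat{\Delta}(s,\gamma)$ and the limit is governed by (\ref{bb00}) with $\gamma$ replaced by $\gamma+1$. This requires the admissibility conditions of (\ref{DD36}) to persist under $\gamma\mapsto\gamma+1$ (in particular $\gamma+1-\lambda-\beta>-1$ still holds and the non-resonance $\beta+\lambda-\gamma\notin\mathbb{Z}$ is preserved), and it rests on the theorem supplying (\ref{bb00}) to guarantee, at each admissible parameter value, not only the expansion but the very existence of the $s$-independent integration constant $c_{2}(\gamma)$. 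Establishing $c_{3}(\gamma)$ as an honest constant, rather than merely reading it off formally, is thus the crux.
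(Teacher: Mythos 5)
Your proposal is correct and is essentially the paper's own proof: the paper likewise invokes the identity $(-1)^{n}P_{n}(0;t,\gamma)=\widehat{D}_{n}(t,\gamma+1)/\widehat{D}_{n}(t,\gamma)$, forms the double ratio $\lim_{n\to\infty}\frac{\widehat{D}_{n}(s/n,\gamma+1)}{\widehat{D}_{n}(s/n,\gamma)}\cdot\frac{\widehat{D}_{n}(0,\gamma)}{\widehat{D}_{n}(0,\gamma+1)}=\frac{\widehat{\Delta}(s,\gamma+1)}{\widehat{\Delta}(s,\gamma)}$, and then states that (\ref{D48}) and (\ref{D49}) follow from the large-$s$ expansion (\ref{bb00}); your extra care with the $\gamma$-independence of the $t^{n^{2}-n\lambda}$ renormalization and with admissibility under $\gamma\mapsto\gamma+1$ only fills in details the paper leaves implicit. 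One caveat worth recording: term-by-term subtraction of (\ref{bb00}) actually gives the $\ln s$ coefficient $\frac{\beta+\lambda}{4}\left[(\beta+\lambda-2\gamma-2)-(\beta+\lambda-2\gamma)\right]=-\frac{\beta+\lambda}{2}$, the opposite sign to the $+\frac{\beta+\lambda}{2}$ displayed in (\ref{D48}) (and independently confirmed by the Coulomb-fluid computation in Remark 7), so there is a sign inconsistency in the $\ln s$ term of (\ref{bb00}) which both the paper's proof and your claim of ``reproducing'' (\ref{D48}) pass over silently.
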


\begin{proof}
From the fact that
\begin{align*}
\lim\limits_{n \rightarrow \infty}\frac{(-1)^{n}P_{n}(0; \frac{s}{n},\gamma)}{(-1)^{n}P_{n}(0; 0,\gamma)}=\lim\limits_{n \rightarrow \infty}\frac{\widehat{D}_{n}(s/n, \gamma+1)}{\widehat{D}_{n}(s/n, \gamma)}\frac{\widehat{D}_{n}(0,\gamma)}{\widehat{D}_{n}(0, \gamma+1)}=\frac{\widehat{\Delta}(s,\gamma+1)}{\widehat{\Delta}(s,\gamma)},
\end{align*}
(\ref{D48}) and (\ref{D49}) follow.
\end{proof}
\subsection{Large $n$ behavior of $P_n(0;t,\gamma)$.}
In this subsection, we obtain the constant, $c_{3}(\gamma),$ 
following similar steps in section 2.4. We substitute ${\rm v}(x,t)$ and ${\rm v}'(x,t)$ into $(\ref{a2a})$ and $(\ref{a3}).$
With the aid of the integral formulas in the Appendix A, we find $a$ and $b$ satisfy two algebraic equations,
\begin{equation}\label{D37}
\frac{\lambda+\beta}{\sqrt{(t+a)(t+b)}}-\frac{\gamma}{\sqrt{ab}}+1=0,
\end{equation}
\begin{equation}\label{D38}
\lambda+\beta-\gamma+\frac{a+b}{2}-\frac{t(\lambda+\beta)}{\sqrt{(t+a)(t+b)}}=2n.
\end{equation}
The parameters $a$ and $b$, are, as before, the end points of the support of the equilibrium density.

Let $\widehat{X}:=\sqrt{ab}$, eliminating $a+b$ from the above equations, we see that $\widehat{X}$ satisfies a quintic equation,
\begin{equation}\label{D39}
\left(\widehat{X}^{2}+2t(2n+\gamma-\lambda-\beta+\frac{t\gamma}{\widehat{X}})-t^{2}\right)\left(\gamma-\widehat{X}\right)^{2}
-\widehat{X}^{2}\left(\lambda+\beta\right)^{2}=0.
\end{equation}
Note the large $n$ of $\widehat{X}$ asymptotic of $\hat{X},$ given by,
\begin{equation}\label{D43}
\widehat{X}\sim \gamma-\frac{\gamma(\beta+\lambda)}{2}(nt)^{-\frac{1}{2}}+\frac{\gamma(\beta+\lambda)^{2}}{4}(nt)^{-1}
+\frac{\gamma(\beta+\lambda)(\gamma^{2}-2(\beta+\lambda)^{2})}{16}(nt)^{-\frac{3}{2}}.
\end{equation}
with $t \in (0,t_{1}],$ $0<t_{1}<\infty,$ $n\rightarrow \infty$ such that $nt$ is fixed.
\par
We now give the large $n$ behavior of $P_{n}(0;t,\gamma),$ and place this in the Theorem below, without displaying the detail steps.
\begin{theorem}
If ${\rm v}(x)=-\ln{w(x;t)}=(\lambda+\beta)\ln(x+t)-\gamma\ln{x}+x+t,$ and
\\
$\lambda<0, \beta>0, \gamma>0, t>0, \gamma-\lambda-\beta>-1, x\in (0, \infty),$  the evaluation of the orthogonal polynomials at the $x=0,$ reads,
\begin{align}\label{D45}
(-1)^nP_{n}(0;t,\gamma) \sim n^{n+\gamma-\alpha-\beta+\frac{1}{2}}&e^{-n}\exp\left(-\left(\gamma+\frac{1}{2}\right)\ln{\gamma}+\gamma+\frac{\lambda+\beta}{2}\ln(nt)\right.
\nonumber\\
&\left.+\frac{(1+2\gamma)(\beta+\lambda)}{4}(nt)^{-\frac{1}{2}}-\frac{(1+2\gamma)(\beta+\lambda)^{2}}{16}(nt)^{-1}\right),
\end{align}
the asymptotic estimation is uniform with respect to $t \in (0,t_{1}],$ $0<t_{1}<\infty,$
\\
$\lambda<0, \beta>0, \gamma>0,\gamma-\lambda-\beta>-1,$ $n\rightarrow \infty$ such that $nt$ is fixed.
\end{theorem}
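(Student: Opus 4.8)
The plan is to follow verbatim the equilibrium-measure / WKB scheme already used for Theorem 8, now applied to the weight $w(x;t)=(x+t)^{-(\lambda+\beta)}x^{\gamma}e^{-x-t}$ on $(0,\infty)$. The starting point is the large-$n$ approximation $P_{n}(z)\sim\exp[-S_{1}(z)-S_{2}(z)]$ of $(\ref{a112})$, with $S_{1},S_{2}$ given by $(\ref{a113})$ and $(\ref{a114})$, which I evaluate at $z=0$. At $z=0$ two simplifications occur: $\sqrt{(z-a)(z-b)}=\sqrt{ab}$ and $x-z=x$, so the bracket in $(\ref{a114})$ collapses to $\sqrt{ab}/x+1$; moreover $\bigl(\tfrac{\sqrt{z-a}+\sqrt{z-b}}{2}\bigr)^{2}$ evaluates to $-(\sqrt{a}+\sqrt{b})^{2}/4$, whose logarithm supplies the overall sign $(-1)^{n}$ through $e^{-in\pi}$. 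The endpoints $a,b$ are governed by $(\ref{D37})$ and $(\ref{D38})$, which I encode through $\widehat{X}=\sqrt{ab}$ and $a+b$.

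First I would compute $\exp[-S_{1}(0;t,\gamma)]$ by inserting $z=0$ into $(\ref{a113})$, producing an elementary expression in $a,b$ (equivalently in $\widehat{X}$ and $a+b$). Second, the bulk of the work is $\exp[-S_{2}(0;t,\gamma)]$. Writing ${\rm v}(x)=(\lambda+\beta)\ln(x+t)-\gamma\ln x+x+t$, the integral in $(\ref{a114})$ splits into the logarithmic pieces
$$
\int_a^b \frac{\ln(x+t)}{\sqrt{(b-x)(x-a)}}\Bigl(\frac{\sqrt{ab}}{x}+1\Bigr)dx,\qquad \int_a^b\frac{\ln x}{\sqrt{(b-x)(x-a)}}\Bigl(\frac{\sqrt{ab}}{x}+1\Bigr)dx,
$$
together with the rational contribution of the $x+t$ term. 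The rational integrals are immediate from the standard moments $\int_a^b(b-x)^{-1/2}(x-a)^{-1/2}\,dx=\pi$, $\int_a^b x(b-x)^{-1/2}(x-a)^{-1/2}\,dx=\pi(a+b)/2$ and $\int_a^b x^{-1}(b-x)^{-1/2}(x-a)^{-1/2}\,dx=\pi/\sqrt{ab}$ collected in Appendix A; in particular the $x$-part of ${\rm v}$ contributes $-\tfrac14(a+b)\sim-n$, furnishing $e^{-n}$, while the term $n\ln[(\sqrt{a}+\sqrt{b})^{2}/4]$ from the first piece of $S_{2}$ furnishes $n^{n}$. The two logarithmic integrals (and their $1/x$-weighted analogues) are the genuinely delicate inputs and must be taken in the closed forms of Appendix A, with the correct determination of $\ln x$ and $\ln(x+t)$ along $[a,b]$.

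Finally I would assemble $-S_{1}(0;t,\gamma)-S_{2}(0;t,\gamma)$ and expand for large $n$ with $nt$ held fixed, substituting the asymptotics $(\ref{D43})$ for $\widehat{X}=\sqrt{ab}$ and the expansion of $a+b$ read off from $(\ref{D38})$. The leading orders reproduce the prefactor $n^{n+\gamma-\lambda-\beta+1/2}e^{-n}$ already visible in the exact $t=0$ evaluation $(\ref{D47})$, while the remaining logarithmic and rational pieces, expanded through order $(nt)^{-1}$, produce the $\tfrac{\lambda+\beta}{2}\ln(nt)$, the $(nt)^{-1/2}$ and the $(nt)^{-1}$ terms of $(\ref{D45})$, together with the constant $-(\gamma+\tfrac12)\ln\gamma+\gamma$ arising from $\widehat{X}\to\gamma$.

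I expect the main obstacle to be twofold. First, the logarithmic integrals must be evaluated with the correct branch and then combined with the $\sqrt{ab}/x$ weight; a sign or branch error here corrupts every subsequent coefficient. Second, the displayed coefficients of $(nt)^{-1/2}$ and $(nt)^{-1}$ emerge only after delicate cancellations among the $O(1)$ and $O((nt)^{-1/2})$ contributions of $S_{1}$, the rational part of $S_{2}$, and the logarithmic part of $S_{2}$; hence the asymptotic bookkeeping, feeding $(\ref{D43})$ and the $a+b$ expansion consistently to the required order, is the most error-prone step. Since the WKB approximation does not fix the overall multiplicative normalization, I would cross-check the result against $(\ref{D47})$, which is exactly how the constant is later pinned down.
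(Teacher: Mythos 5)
Your proposal follows exactly the route the paper takes (and largely leaves undisplayed): evaluate the Coulomb-fluid approximation $P_n(0)\sim\exp[-S_1(0)-S_2(0)]$ of (\ref{a112})--(\ref{a114}) using the endpoint equations (\ref{D37})--(\ref{D38}), the Appendix A integrals, and the expansion (\ref{D43}) of $\widehat{X}=\sqrt{ab}$, with the prefactor cross-checked against (\ref{D47}); this is precisely Section 3.3's adaptation of the Section 2.4 scheme. One small caution: with the standard branch ($\sqrt{(z-a)(z-b)}\sim z$ at infinity) one has $\sqrt{(z-a)(z-b)}\big|_{z=0}=-\sqrt{ab}$, not $+\sqrt{ab}$, so the bracket in (\ref{a114}) collapses to $1-\sqrt{ab}/x$ --- consistent with the branch subtlety you yourself flag as the error-prone step.
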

{\bf Remark 7:}
To obtain the constant $c_{3}(\gamma)$ in (\ref{D48}), we rewrite the asymptotic estimation of $(-1)^{n}P_{n}(0;t,\gamma),$ the equation
(\ref{D45}), as
\begin{align*}
(-1)^{n}P_{n}(0;t,\gamma)\sim &n^{n+\gamma-\alpha-\beta+\frac{1}{2}}e^{-n}\exp\left(-\left(\gamma+\frac{1}{2}\right)\ln{\gamma}+\gamma+\frac{\lambda+\beta}{2}\ln(nt)\right.\nonumber\\
&\left.+\frac{(1+2\gamma)(\beta+\lambda)}{4}(nt)^{-\frac{1}{2}}-\frac{(1+2\gamma)(\beta+\lambda)^{2}}{16}(nt)^{-1}\right)\nonumber\\
\end{align*}

Taking into account the extreme right of (\ref{D47}),
we see that,
$$
\frac{P_{n}(0;t,\gamma)}{P_{n}(0;0,\gamma)}\sim \exp\left(c_{3}+\frac{\lambda+\beta}{2}\ln(nt)+\frac{(1+2\gamma)(\beta+\lambda)}{4}(nt)^{-\frac{1}{2}}-\frac{(1+2\gamma)(\beta+\lambda)^{2}}{16}(nt)^{-1}\right),
$$
in accordance to (\ref{D48}). From these facts $c_{3}$ is identify to be
$$
\ln\left(\frac{\Gamma(1+\gamma-\lambda-\beta)}{\gamma}\right)-\left(\gamma-\frac{1}{2}\right)\ln{\gamma}+\gamma-\frac{1}{2}\ln{(2\pi)}.
$$
Consequently, the equation (\ref{D49}) becomes,
$$
c_{2}(\gamma+1)-c_{2}(\gamma)=c_{3}(\gamma)=\ln\left(\frac{\Gamma(1+\gamma-\lambda-\beta)}{\gamma}\right)
-\left(\left(\gamma-\frac{1}{2}\right)\ln{\gamma}-\gamma+\frac{1}{2}\ln{(2\pi)}\right).
$$
\\
We may identify
\begin{equation*}
c_{2}(\gamma)=\ln\left(\frac{G\left(1+\gamma-\lambda-\beta\right)}{\Gamma(\gamma)G(\gamma)}\right)+b(\gamma),\quad \gamma>0
\end{equation*}
up to another constant $b(\gamma),$ which satisfies,
\begin{equation*}
b(\gamma+1)-b(\gamma)=\int_{0}^{\infty}\left(\frac{1}{2}-\frac{1}{t}+\frac{1}{e^{t}-1}\right)\frac{e^{-\gamma{t}}}{t}dt.
\end{equation*}
Binet's formula, see (\cite{WG1996}, P249), says that,
\begin{equation*}
\ln{\Gamma(z)}=\left(z-\frac{1}{2}\right)\ln(z)-z+\frac{1}{2}\ln{(2\pi)}+\int_{0}^{\infty}\left(\frac{1}{2}-\frac{1}{t}
+\frac{1}{e^{t}-1}\right)\frac{e^{-zt}}{t}dt,
\quad {\rm Re\:} z>0.
\end{equation*}
{\bf Remark 8:}
As $\gamma\to\infty,$
$$
b(\gamma+1)-b(\gamma)=\frac{1}{12\:\gamma}-\frac{1}{360\gamma^3}+{\rm O}(1/\gamma^5).
$$
We also note that,
\bea
b(\gamma)=\kappa-\frac{1}{2}\ln{\gamma}+\frac{1}{720\gamma^{2}}++{\rm O}(1/\gamma^4).
\eea
It may be of interest to determine the constant $\kappa$.

\section{Limiting behavior of the Kernel with the Pollaczek-Jacobi type weight.}
The ladder operator formalism for orthogonal polynomials and the associated compatibility condition can be found, for example, in \cite{YMI1997}.
This is a handy tool regarding orthogonal polynomials on the unit circle \cite{BC2008}, construction of  Jacobi polynomials \cite{CI2005} and has been adapted to obtain the
Painlev\'{e} equations, arising the deformation of classical weight. We refer the Reader to \cite{BC2008, CD2010, CF2006, CP2005, CZ2010, FO2010}.
In particular, this r formalism has
been applied to specific orthogonal polynomial ensemble in \cite{ChenIts12010, FO2010}, and its comparison with the isomonodromy approach\cite{JMU1981}. For the extension of ladder operators to discrete orthogonal polynomials, and allied discrete Painlev\'{e} equations; to  $q$-orthogonal polynomials and the allied $q$-Painlev\'{e} equations, we refer the Reader to \cite{B2010, B2011, BA2010, CI2008, IM2010, INS2004, CG2015}.
\\
In this section, we construct the Lax pair for our problem.
\\
First, we recall the ladder operator relations of $(2.7)$ and $(2.8)$ in \cite{CD2010}, satisfied by the polynomials $P_{n}(x;t,\alpha,\beta)$ orthogonal with respect to the Plooaczek-Jacobi type weight,
\begin{equation}\label{101a}
\left(\frac{d}{dx}+B_{n}(x;t,\alpha,\beta)\right)P_{n}(x;t,\alpha,\beta)=\beta_{n}A_{n}(x;t,\alpha,\beta)P_{n-1}(x;t,\alpha,\beta),
\end{equation}
\begin{equation}\label{102a}
\left(\frac{d}{dx}-B_{n}(x;t,\alpha,\beta)-{\rm v}'(x)\right)P_{n-1}(x;t,\alpha,\beta)=-A_{n-1}(x;t,\alpha,\beta)P_{n}(x;t,\alpha,\beta),
\end{equation}
The ``coefficients",
$A_{n}(x;t,\alpha,\beta)$ and $B_{n}(x;t,\alpha,\beta)$ are given by $(2.14),$ $(2.15)$ in \cite{CD2010}, respectively;
\begin{equation*}
A_{n}(x;t,\alpha,\beta)=\frac{R^{*}_{n}}{x^{2}}+\frac{R_{n}}{x}-\frac{R_{n}}{x-1}, \qquad B_{n}(x;t,\alpha,\beta)=\frac{r^{*}_{n}}{x^{2}}-\frac{n-r_{n}}{x}-\frac{r_{n}}{x-1},
\end{equation*}
and $R^{*}_{n}$, $R_{n}$, $r^{*}_{n}$, $r_{n}$ are defined by $(2.16)$-$(2.19)$ in \cite{CD2010}, all depending on $t,$ $\alpha,$ $\beta.$
\par
\begin{theorem}
The monic polynomials $P_{n}(x; t, \alpha, \beta)$ orthogonal with respect to the Pollaczek-Jacobi type weight, satisfy the ladder operator relations in $t$,
\begin{equation}\label{103a}
\left(xt\frac{d}{dt}-r^{*}_{n}\right)P_{n}(x; t, \alpha, \beta)=\beta_{n}R^{*}_{n}P_{n-1}(x; t, \alpha, \beta),
\end{equation}
\begin{equation}\label{104a}
\left(xt\frac{d}{dt}+r^{*}_{n}-t+xR^{*}_{n-1}\right)P_{n-1}(x; t, \alpha, \beta)=R^{*}_{n-1}P_{n}(x; t, \alpha, \beta).
\end{equation}
\end{theorem}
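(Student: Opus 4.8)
The plan is to obtain both relations from the single observation that the $t$-dependence of the weight enters only through $\re^{-t/x}$, so that $\pt w=-w/x$, and then to mimic the standard ladder-operator bookkeeping but in the parameter $t$ rather than in $x$. Since $P_n(x;t,\alpha,\beta)$ is monic with leading coefficient independent of $t$, the polynomial $\pt P_n$ has degree at most $n-1$, hence $t\,\pt P_n=\sum_{k=0}^{n-1}c_{n,k}P_k$. First I would differentiate the orthogonality relation $\int_0^1 P_nP_kw\,dx=0$ (for $k<n$) with respect to $t$; differentiation under the integral sign is legitimate for $t>0$ because $\re^{-t/x}$ annihilates the essential singularity at the origin, and no boundary terms arise since we differentiate in the parameter. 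Using that $\pt P_k$ has degree $<k<n$ (hence is orthogonal to $P_n$) together with $\pt w=-w/x$, this collapses to $c_{n,k}h_k=\int_0^1 (P_nP_k/x)\,w\,dx=:I_{n,k}$. In particular $t\,c_{n,n-1}$ is exactly the coefficient $r_n^{*}$ of $x^{-2}$ in $B_n$ recorded in \cite{CD2010}.

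For the first relation I would study $\Phi_n:=xt\,\pt P_n-r_n^{*}P_n$. Because $t\,c_{n,n-1}=r_n^{*}$, the degree-$n$ terms cancel and $\deg\Phi_n\le n-1$. The key step is that $\Phi_n$ is orthogonal to $P_0,\dots,P_{n-2}$: for $j\le n-2$ the three-term recurrence gives
\[
\int_0^1 (xt\,\pt P_n)P_jw\,dx=t\big(I_{n,j+1}+\alpha_jI_{n,j}+\beta_jI_{n,j-1}\big)=t\int_0^1 \frac{P_n}{x}\,(xP_j)\,w\,dx=t\int_0^1 P_nP_jw\,dx=0,
\]
and $\int_0^1 r_n^{*}P_nP_jw\,dx=0$ as well. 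Hence $\Phi_n$ is a multiple of $P_{n-1}$, and projecting onto $P_{n-1}$ and using $I_{n,n}=-\alpha_{n-1}I_{n,n-1}-\beta_{n-1}I_{n,n-2}$ (again from the recurrence) together with $\beta_n=h_n/h_{n-1}$ identifies the constant as $\beta_nR_n^{*}$, where $R_n^{*}$ is, up to the \cite{CD2010} sign convention, the coefficient $\tfrac{t}{h_n}\int_0^1 (P_n^2/x)w\,dx$ of $x^{-2}$ in $A_n$. This yields (\ref{103a}).

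For the second relation I would take $\Psi:=xt\,\pt P_{n-1}+(r_n^{*}-t+xR_{n-1}^{*})P_{n-1}-R_{n-1}^{*}P_n$ and show $\Psi=0$. Note first that $\pt P_{n-1}$ carries no $P_n$ component, so the $P_n$ on the right of (\ref{104a}) can only be produced by the term $xR_{n-1}^{*}P_{n-1}$ through $xP_{n-1}=P_n+\alpha_{n-1}P_{n-1}+\beta_{n-1}P_{n-2}$; in particular the $x^n$ coefficients of $xR_{n-1}^{*}P_{n-1}$ and $R_{n-1}^{*}P_n$ cancel and $\deg\Psi\le n-1$. I would then test $\Psi$ against $P_0,\dots,P_{n-1}$: for $j\le n-3$ the same recurrence collapse as above kills every term, and the content of the statement is concentrated in the cases $j=n-2,n-1$, where the integrals $\int_0^1 (P_{n-1}P_m/x)w\,dx$ for $m=n-1,n$ must be expressed through $r_n^{*},r_{n-1}^{*}$ and $R_{n-1}^{*}$. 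Equivalently, inserting the recurrence into the first relation applied at index $n-1$ reduces (\ref{104a}) to a sum rule of the type $t=r_n^{*}+r_{n-1}^{*}+\alpha_{n-1}R_{n-1}^{*}$ already available among the finite-$n$ identities of \cite{CD2010}.

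The main obstacle I anticipate is not the structural argument — the degree count plus orthogonality via the three-term recurrence is clean — but the final constant-matching: correctly reconciling the signs and normalizations of the integral representations of $r_n^{*}$ and $R_n^{*}$ with the definitions $(2.16)$--$(2.19)$ of \cite{CD2010}, and, for (\ref{104a}), verifying the requisite sum rule among $r_n^{*}, r_{n-1}^{*}, R_{n-1}^{*}$. Tracking these conventions carefully is precisely what upgrades the two ``$\propto P_{n-1}$'' and ``$\propto P_n$'' statements into the exact coefficients displayed in (\ref{103a}) and (\ref{104a}).
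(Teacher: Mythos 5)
The paper states this theorem with no proof at all (the statement is followed directly by the Christoffel--Darboux material), so there is no argument of the authors to compare against; judged on its own terms, your strategy is the natural and correct one: differentiate the orthogonality relations in $t$ using $\partial_t w=-w/x$, expand $t\,\partial_t P_n$ over $P_0,\dots,P_{n-1}$, collapse with the three-term recurrence, and match extreme coefficients. However, your bookkeeping contains a genuine error. Since $\partial_t P_n=\frac{1}{t}\sum_k c_{n,k}P_k$, differentiating $\int_0^1 P_nP_kw\,dx=0$ gives $c_{n,k}h_k=t\,I_{n,k}$, \emph{not} $c_{n,k}h_k=I_{n,k}$; consequently $c_{n,n-1}=r_n^{*}$ exactly (no extra factor of $t$), and it is this equality --- not your ``$t\,c_{n,n-1}=r_n^{*}$'' --- that cancels the $x^{n}$ terms in $xt\,\partial_tP_n-r_n^{*}P_n$. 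As written, your normalization claim and your degree-cancellation claim are incompatible with each other.

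The more serious gap is the final constant, precisely the step you defer as ``constant-matching''. With the conventions this paper actually imports from \cite{CD2010} (namely $R_n^{*}$ is the $x^{-2}$ coefficient of $A_n$, so $R_n^{*}=\frac{t}{h_n}\int_0^1\frac{P_n^2}{y}w\,dy$, and $r_n^{*}=\frac{t}{h_{n-1}}\int_0^1\frac{P_nP_{n-1}}{y}w\,dy$ from $B_n$), your projection onto $P_{n-1}$ together with your own identity $I_{n,n}=-\alpha_{n-1}I_{n,n-1}-\beta_{n-1}I_{n,n-2}$ gives
\[
\int_0^1\bigl(xt\,\partial_tP_n-r_n^{*}P_n\bigr)P_{n-1}w\,dx
=t\bigl(\alpha_{n-1}I_{n,n-1}+\beta_{n-1}I_{n,n-2}\bigr)=-t\,I_{n,n}=-R_n^{*}h_n,
\]
so the constant is $-\beta_nR_n^{*}$, not $+\beta_nR_n^{*}$. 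This minus sign is not absorbable into any ``sign convention'': the same definitions fix the displayed forms of $A_n$ and $B_n$, and the minus sign is corroborated twice inside this very paper --- by the Chen--Its Laguerre relation $\left(xt\frac{d}{dt}-r_n\right)P_n=-\beta_nR_nP_{n-1}$ quoted in Section 5 (whose derivation is verbatim the same computation), and by the Lax matrix $D_1$ in (\ref{107a}), whose $(1,2)$ entry $-R_n^{*}\sqrt{\beta_n}/t$ is exactly what the minus-sign relation produces when rewritten for $\phi_n$, whereas the plus sign would give $+R_n^{*}\sqrt{\beta_n}/t$. Your own second argument detects the inconsistency: inserting the index-$(n-1)$ version of the first relation into $\Psi$ and using $xP_{n-1}=P_n+\alpha_{n-1}P_{n-1}+\beta_{n-1}P_{n-2}$, the $P_{n-2}$ contributions cancel \emph{only} with the minus sign, after which (\ref{104a}) reduces to the sum rule $r_n^{*}+r_{n-1}^{*}+\alpha_{n-1}R_{n-1}^{*}=t$; that sum rule is true and needs one line rather than an appeal to \cite{CD2010}, since $r_n^{*}+r_{n-1}^{*}=\frac{t}{h_{n-1}}\int_0^1\frac{P_{n-1}}{y}\,(yP_{n-1}-\alpha_{n-1}P_{n-1})\,w\,dy=t-\alpha_{n-1}R_{n-1}^{*}$. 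If instead the plus sign of (\ref{103a}) is kept, an uncancelled term $2\beta_{n-1}R_{n-1}^{*}P_{n-2}$ survives and (\ref{104a}) fails. So a correct execution of your plan proves (\ref{104a}) as stated and (\ref{103a}) with right-hand side $-\beta_nR_n^{*}P_{n-1}$; the two displayed equations of the theorem as printed cannot both hold under \cite{CD2010}'s definitions, and your proposal leaves unresolved exactly the point (the sign) where the statement and the proof part ways.
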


By the Christoffel--Darboux formula \cite{Szego1939}, the reproducing kernel reads,
\begin{equation}\label{a147}
\mathbb{K}_{n}(x,y):=\sqrt{\frac{h_{n}}{h_{n-1}}}\frac{\phi_{n}(x)\phi_{n-1}(y)-\phi_{n}(y)\phi_{n-1}(x)}{x-y},
\end{equation}
where $h_{n}$ is the square of the $L^{2}$ norm is given by (\ref{452}) 
and $\beta_{n}=h_{n}/h_{n-1}.$ Here, $\phi_{n}(x)$ and $\phi_{n-1}(x)$ are defined in terms of $P_n(x;t,\alpha,\beta)$ by
\begin{equation*}
\phi_{n}(x):=\frac{P_{n}(x; t, \alpha, \beta)}{\sqrt{h_{n}}}w^{\frac{1}{2}}(x; t, \alpha, \beta)=\frac{P_{n}(x; t, \alpha, \beta)}{\sqrt{h_{n}}}x^{\frac{\alpha}{2}}(1-x)^{\frac{\beta}{2}}e^{-\frac{t}{2x}},
\end{equation*}
\begin{equation*}
\phi_{n-1}(x):=\frac{P_{n-1}(x; t, \alpha, \beta)}{\sqrt{h_{n-1}}}w^{\frac{1}{2}}(x; t, \alpha, \beta)=\frac{P_{n-1}(x; t, \alpha, \beta)}{\sqrt{h_{n-1}}}x^{\frac{\alpha}{2}}(1-x)^{\frac{\beta}{2}}e^{-\frac{t}{2x}}.
\end{equation*}
Let $\Psi$ ,
\begin{equation*}
\Psi=\begin{bmatrix}
\phi_{n}(x)\\
\phi_{n-1}(x)
\end{bmatrix}
=\begin{bmatrix}
\frac{P_{n}(x; t, \alpha, \beta)}{\sqrt{h_{n}}}x^{\frac{\alpha}{2}}(1-x)^{\frac{\beta}{2}}e^{-\frac{t}{2x}}\\
\frac{P_{n-1}(x; t, \alpha, \beta)}{\sqrt{h_{n-1}}}x^{\frac{\alpha}{2}}(1-x)^{\frac{\beta}{2}}e^{-\frac{t}{2x}}.
\end{bmatrix}
\end{equation*}
We find,
\begin{equation}\label{a201}
\Psi_{x}=\left(\frac{C_{1}}{x-1}+\frac{C_{2}}{x}+\frac{C_{3}}{x^{2}}\right)\Psi,
\end{equation}
and
\begin{equation}\label{105a}
C_{1}=\begin{bmatrix}
r_{n}+\frac{\beta}{2} & -R_{n}\sqrt{\beta_{n}}\\
R_{n-1}\sqrt{\beta_{n}} & -r_{n}-\frac{\beta}{2}
\end{bmatrix}
,
\quad
C_{2}=\begin{bmatrix}
n-r_{n}+\frac{\alpha}{2} & R_{n}\sqrt{\beta_{n}}\\
-R_{n-1}\sqrt{\beta_{n}} & -n+r_{n}-\frac{\alpha}{2}
\end{bmatrix}
,
\end{equation}
\begin{equation}\label{106a}
C_{3}=\begin{bmatrix}
\frac{t}{2}-r_{n}^{*} & R^{*}_{n}\sqrt{\beta_{n}}\\
-R^{*}_{n-1}\sqrt{\beta_{n}} & -\frac{t}{2}+r^{*}_{n}
\end{bmatrix}
.
\end{equation}
Similarly, the ladder operator relationships in $t,$ see (\ref{103a}) and (\ref{104a}) can be re-written as
\begin{equation}\label{a202}
\Psi_{t}=\left(D_{0}+\frac{D_{1}}{x}\right)\Psi,
\end{equation}
 and
\begin{equation}\label{107a}
D_{0}=\begin{bmatrix}
\frac{R_{n}^{*}}{2t} & 0\\
0 & -\frac{R_{n-1}^{*}}{2t}
\end{bmatrix}
,
\qquad
D_{1}=\begin{bmatrix}
\frac{r_{n}^{*}}{t}-\frac{1}{2} & -\frac{R_{n}^{*}\sqrt{\beta_{n}}}{t}\\
\frac{R_{n-1}^{*}\sqrt{\beta_{n}}}{t} & \frac{1}{2}-\frac{r_{n}^{*}}{t}
\end{bmatrix}
.
\end{equation}
The equations (\ref{a201}) and (\ref{a202}) give the Lax pair mentioned earlier. 
The compatibility condition of the Lax pair reads $\Psi_{xt}=\Psi_{tx}$, which, when un-pact, re-produces the differential differences obtained in \cite{CD2010}.
\\
Writing out (\ref{a201}) in component form, followed by eliminating $\phi_{n-1}(x)$ in favor of $\phi_n(x),$ we find,
%
\begin{align}\label{cc03}
&2\left(x^{3}-x^{2}\right)\phi_{n}''(x)+2\left(3x^{2}-2x\right)\phi_{n}'(x)-\left(2(2n+\alpha+\beta)x+2(r_{n}-r^{*}_{n})+t-2n-\alpha\right)\phi_{n}\nonumber\\
&-\frac{2(R^{*}_{n}-R_{n})x^{2}(x-1)}{x(R^{*}_{n}-R_{n})-R^{*}_{n}}\phi_{n}'(x)+\frac{R^{*}_{n}-R_{n}}{x(R^{*}_{n}-R_{n})-R^{*}_{n}}\left((2n+\alpha+\beta)x^{2}
+\left(2(r_{n}-r^{*}_{n})\right.\right.\nonumber\\
&\left.\left.+t-2n-\alpha\right)x+2r^{*}_{n}-t\right)\phi_{n}-\frac{1}{2x^{2}(x-1)}\left[\left(x(2(r_{n}-r^{*}_{n})+t-2n-\alpha)+2r^{*}_{n}-t\right.\right.\nonumber\\
&\left.\left.+x^{2}(2n+\alpha+\beta)\right)^{2}-4\left(x(R^{*}_{n}-R_{n})-R^{*}_{n}\right)\left(x(R^{*}_{n-1}-R_{n-1})-R^{*}_{n-1}\right)\beta_{n}\right]\phi_{n}=0,
\end{align}
where $r_{n}$, $r^{*}_{n}$, $R^{*}_{n}$, $R_{n}$, $R^{*}_{n-1}$, $R_{n-1}$ may be expressed by $H_{n}$, $H_{n}'(t)$ and $H_{n}"(t),$ these are $(2.56),$ $(2.32),$ $(2.33),$ $(3.7),$ $(5.1),$ $(5.2),$ $(5.8)$ in \cite{CD2010}. For convenience, we list these relations in Appendix C.
\\
We give in the next theorem, the scaled kernel.
\begin{theorem}
Let
\begin{equation}\label{441a}
\zeta:=4n^{2}x, \quad \zeta^{*}:=4n^{2}y, \quad s:=2n^{2}t, \quad  \phi(\zeta):=\lim_{n \rightarrow \infty}\phi_{n}\left(\frac{\zeta}{4n^{2}}\right),
\end{equation}
where $n\rightarrow\infty,$ $t\rightarrow0^{+},$  $\zeta$ and $\zeta^{*}$ in compact subsets of $(0,\infty),$ and $s$ finite,
then,
\begin{align}\label{401a0}
&\lim_{n \rightarrow \infty}\frac{1}{2n}\mathbb{K}_{n}\left(\frac{\zeta}{4n^{2}},\frac{\zeta^{*}}{4n^{2}}\right)\nonumber\\
&=\frac{(\zeta-4s{\cal H}')\phi(\zeta){\zeta^{*}}^{2}\phi'(\zeta^{*})-(\zeta^{*}-4s{\cal H}')\phi(\zeta^{*})\zeta^{2}\phi'(\zeta)-2s^{2}{\cal H}''(\zeta-\zeta^{*})\phi(\zeta)\phi(\zeta^{*})}{(\zeta-\zeta^{*})(\zeta-4s{\cal H}')(\zeta^{*}-4s{\cal H}')},
\end{align}
Furthermore, $\phi(\zeta)$ satisfies the ODE,
\begin{align}\label{4a0a138}
\zeta^{2}\phi''(\zeta)+\left(\zeta-\frac{4s{\cal H}'(s)\zeta}{\zeta-4s{\cal H}'(s)}\right)\phi'(\zeta)+\left(\frac{\zeta}{4}+\frac{2s^{2}{\cal H}''(s)}{\zeta-4s{\cal H}'(s)}-\frac{\alpha^{2}}{4}-\frac{s^{2}}{\zeta^{2}}-\frac{\alpha{s}}{\zeta}-{\cal H}\right)\phi(\zeta)=0,
\end{align}
with the boundary conditions of $\phi(0,\alpha,\beta)=0,$  \; $\alpha>0$.
\end{theorem}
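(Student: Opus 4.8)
The plan is to derive both assertions from the finite-$n$ objects already assembled in this section: the Christoffel--Darboux kernel (\ref{a147}), the $x$-Lax pair (\ref{a201}) with matrices (\ref{105a})--(\ref{106a}), and the scalar second order equation (\ref{cc03}). In each case I would substitute the hard-edge scaling $x=\zeta/(4n^2)$, $y=\zeta^*/(4n^2)$, $t=s/(2n^2)$, replace the ladder coefficients $r_n,r_n^*,R_n,R_n^*,R_{n-1},R_{n-1}^*$ by their expressions in $H_n,H_n',H_n''$ (listed in Appendix C), and let $n\to\infty$ with $H_n\to{\cal H}$. Since $s=2n^2t$, the chain rule gives $\tfrac{d}{dt}=2n^2\tfrac{d}{ds}$, so $H_n'(t)\sim 2n^2{\cal H}'(s)$ and $H_n''(t)\sim 4n^4{\cal H}''(s)$; each ladder coefficient then carries a definite power of $n$, and the whole problem reduces to tracking leading orders.

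For the kernel I would first solve the top component of (\ref{a201}) for $\phi_{n-1}$. The coefficient of $\phi_{n-1}$ in the equation for $\phi_n'$ is exactly $\sqrt{\beta_n}\,A_n(x)$ with $A_n(x)=R_n^*/x^2+R_n/x-R_n/(x-1)$ --- this is just the ladder relation (\ref{101a}) written in $\phi$-form --- while the coefficient of $\phi_n$ is $b_n(x):=(r_n+\beta/2)/(x-1)+(n-r_n+\alpha/2)/x+(t/2-r_n^*)/x^2$. Hence
\[
\phi_{n-1}(x)=\frac{\phi_n'(x)-b_n(x)\,\phi_n(x)}{\sqrt{\beta_n}\,A_n(x)}.
\]
Inserting this into (\ref{a147}) and using $\sqrt{h_n/h_{n-1}}=\sqrt{\beta_n}$ cancels the $\sqrt{\beta_n}$ and expresses the kernel entirely through $\phi_n,\phi_n',A_n,b_n$:
\[
\mathbb{K}_{n}(x,y)=\frac{1}{x-y}\left[\frac{\phi_n(x)\bigl(\phi_n'(y)-b_n(y)\phi_n(y)\bigr)}{A_n(y)}-\frac{\phi_n(y)\bigl(\phi_n'(x)-b_n(x)\phi_n(x)\bigr)}{A_n(x)}\right].
\]
Now I would scale. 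With $x=\zeta/(4n^2)$ small, $A_n(x)=R_n^*(4n^2)^2/\zeta^2+R_n(4n^2)/\zeta+R_n+\cdots$; expressing $R_n^*,R_n$ through $H_n,H_n'$ and keeping the leading order shows that $A_n(\zeta/4n^2)$ is, up to a common power of $n$, proportional to $(\zeta-4s{\cal H}')/\zeta^2$, so that $1/A_n(y)$ supplies the $\zeta^{*2}$ and the factor $(\zeta^*-4s{\cal H}')^{-1}$; the off-diagonal combination $b_n(x)A_n(y)-b_n(y)A_n(x)$ supplies the $\phi(\zeta)\phi(\zeta^*)$ term and, through the $(x-y)$-antisymmetry, the coefficient $2s^2{\cal H}''(\zeta-\zeta^*)$. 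Combining with $\phi_n(\zeta/4n^2)\to\phi(\zeta)$ and $\phi_n'(\zeta/4n^2)\sim 4n^2\phi'(\zeta)$, the overall factor is $\mathcal{O}(2n)$, so the normalisation $\tfrac{1}{2n}$ produces the finite limit (\ref{401a0}) once the three numerator pieces are collected over the common denominator $(\zeta-\zeta^*)(\zeta-4s{\cal H}')(\zeta^*-4s{\cal H}')$.

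For the differential equation I would feed the same substitutions into (\ref{cc03}). Writing $\px=4n^2\,d/d\zeta$, the factors $x^3-x^2$, $3x^2-2x$, $x^2(x-1)$ produce the powers of $\zeta$ seen in (\ref{4a0a138}); the ladder coefficients, once replaced via Appendix C and $H_n\to{\cal H}$, $H_n'\to 2n^2{\cal H}'$, $H_n''\to 4n^4{\cal H}''$, collapse the rational-in-$x$ coefficients of (\ref{cc03}) onto the rational-in-$\zeta$ coefficients containing $\zeta-4s{\cal H}'$, $2s^2{\cal H}''$, $\alpha^2/4$, $s^2/\zeta^2$, $\alpha s/\zeta$ and ${\cal H}$. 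Only the leading $n$-order survives, and the vanishing of every subleading contribution is a consistency check on the computation. The boundary condition $\phi(0)=0$ follows from $\phi_n(x)\propto x^{\alpha/2}e^{-t/(2x)}\times(\text{polynomial})$: under the scaling $t/(2x)=s/\zeta$, so $\phi(\zeta)\sim\mathrm{const}\cdot\zeta^{\alpha/2}e^{-s/\zeta}\to0$ as $\zeta\to0^{+}$.

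The main obstacle is the exact power-of-$n$ bookkeeping: each of $r_n,r_n^*,R_n,R_n^*,R_{n-1},R_{n-1}^*$ must be expanded to precisely the order at which its contribution to the finite limit is captured while its subleading part is shown to drop out, and the $(n-1)$-quantities must be matched to the $n$-quantities (e.g. $R_{n-1}^*=R_n^*+\mathcal{O}(1/n)$ relative to the relevant scale) so that the antisymmetric difference in the kernel does not lose a full order. Equally delicate is justifying that $\phi_n'(\zeta/4n^2)/(4n^2)\to\phi'(\zeta)$, and the analogous statement for $\phi_n''$, i.e. that the convergence $\phi_n(\zeta/4n^2)\to\phi(\zeta)$ holds uniformly on compact $\zeta$-subsets of $(0,\infty)$ together with its derivatives, so that the limit can be passed through (\ref{cc03}) and through the difference quotient in the kernel. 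At the level of a formal scaling argument this is taken for granted; a fully rigorous treatment would require a separate equicontinuity or Riemann--Hilbert input.
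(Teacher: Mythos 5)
Your proposal is correct and takes essentially the same route as the paper: the paper likewise solves the first component of the Lax pair (\ref{a201}) for $\phi_{n-1}$ in terms of $\phi_n$ and $\phi_n'$, substitutes this into the Christoffel--Darboux kernel (\ref{a147}) together with the scaling (\ref{441a}) and the appendix asymptotics of $r_n,\,r_n^*,\,R_n,\,R_n^*$ to get (\ref{401a0}), and obtains (\ref{4a0a138}) by inserting $\zeta=4n^2x$, $s=2n^2t$ and the same asymptotics into (\ref{cc03}) and letting $n\to\infty$. Your explicit treatment of the boundary condition $\phi(0)=0$ and the caveat about uniform convergence of $\phi_n$ and its derivatives are points the paper asserts without proof, so your write-up is, if anything, slightly more careful.
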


\begin{proof}
$\phi_{n-1}(x)$ may be expressed in terms of $\phi_{n}'(x)$ and $\phi_{n}(x)$ via one of the ladder operator relations,
$$
\phi_{n-1}(x)=\frac{\left((2n+\alpha+\beta)x^{2}+(2(n+r_{n}-r_{n}^{*})+t-\alpha)x+2r_{n}^{*}-t\right)\phi_{n}-2(x-1)x^{2}\phi_{n}'(x)}{2\sqrt{\beta_{n}}(xR_{n}
+(1-x)R_{n}^{*})}.
$$
Substituting  $\phi_{n-1},$ given above, $(\ref{441a})$ and the large $n$ asymptotic of $r_{n}^{*}$, $r_{n}$, $R_{n}$ and $R_{n}^{*}$ presented in the Appendix C, into the kernel
given by the equation (\ref{a147}), and let $n\rightarrow\infty,$ then the limiting kernel (\ref{401a0}) is obtained.
\par
We first substitute $\zeta=4n^{2}x$ and $s=2n^{2}t$ into (\ref{cc03}), satisfied by $\phi_{n}(x).$ Recall $r_{n}$, $r^{*}_{n}$, $R^{*}_{n}$, $R_{n}$, $R^{*}_{n-1},$ $R_{n-1}$ in $\cite{CD2010},$ listed in the Appendix C. Now let $n\rightarrow\infty$, then $\phi(\zeta)$ satisfies the second order ODE, namely, (\ref{4a0a138}).
\end{proof}
We list here some properties of the limiting kernel.\\
Let $s=0.$ The boundary condition, ${\cal H}(0,\alpha,\beta)=0$, implies the kernel reduces to
\begin{equation*}
\lim_{n \rightarrow \infty}\frac{1}{2n}\mathbb{K}_{n}\left(\frac{\zeta}{4n^{2}},\frac{\zeta^{*}}{4n^{2}}\right)=\frac{\phi(\zeta)\zeta^{*}\phi'(\zeta^{*})-\phi(\zeta^{*})\zeta\phi'(\zeta)}{\zeta-\zeta^{*}}.
\end{equation*}
In which case, the equation (\ref{4a0a138}) becomes the Bessel differential equation,
with the boundary condition of $\phi(0)=0$. Then the limiting kernel becomes the celebrated Bessel Kernel of Tracy and Widom \cite{TW1994BK}.
\par
For $s>0,$ we make the transformation,
\begin{equation*}
\zeta \rightarrow 4s{\cal H}'(s)\zeta, \quad \zeta^{*} \rightarrow 4s{\cal H}'(s)\zeta^{*}, \quad \phi(\zeta)\rightarrow \zeta^{\rho(s)}\phi(\zeta),
\end{equation*}
 then the kernel (\ref{401a0}) becomes as,
\begin{equation}\label{1bb1}
\lim_{n \rightarrow \infty}\frac{1}{2n}\mathbb{K}_{n}\left(\frac{\zeta}{4n^{2}},\frac{\zeta^{*}}{4n^{2}}\right)=\frac{A(\zeta)B(\zeta^{*})-A(\zeta^{*})B(\zeta)}{\zeta-\zeta^{*}},
\end{equation}
where the functions $A(\zeta)$ and $B(\zeta)$ are given by
\begin{equation*}
A(\zeta):=\frac{\zeta^{\rho(s)}}{2\sqrt{s{\cal H}'(s)}}\:\phi(\zeta),\quad B(\zeta):=\frac{\zeta^{\rho(s)}}{2\sqrt{s{\cal H}'(s)}}\:\frac{\zeta^2}{\zeta-1}\:\phi'(\zeta),
\end{equation*}
and
$$
\rho(s):=\frac{s{\cal H}"(s)}{2{\cal H}'(s)}.
$$
The ODE (\ref{4a0a138}) transforms to,
\begin{equation*}
\phi"(\zeta)+\left(\frac{2(1+b_{0})}{\zeta}-\frac{1}{\zeta-1}\right)\phi'(\zeta)+\left(-\frac{b_{4}}{\zeta^{4}}-\frac{b_{3}}{\zeta^{3}}
+\frac{b_{2}}{\zeta^{2}}+\frac{b_{1}}{\zeta}\right)\phi(\zeta)=0,
\end{equation*}
where
\begin{equation*}
b_{4}=\frac{1}{16}({\cal H}')^{2}, \quad b_{3}=\frac{\alpha}{4{\cal H}'}, \quad b_{2}=\frac{(s{\cal H}'')^{2}-4{\cal H}{{{\cal H}'}^{2}-\alpha^{2}{{\cal H}'}^{2}}}{4{{\cal H}'}^{2}}, \quad b_{1}=s{\cal H}', \quad b_{0}=\frac{s{\cal H}''}{2{\cal H}'}.
\end{equation*}
The coefficients $b_{1}$, $b_{2}$, $b_{3}$, $b_{4}$ satisfy the relation,
$$
b_{1}+b_{2}-b_{3}-b_{4}=0,
$$
which is the $\sigma$-form of the Painlev\'{e} equation found in the Theorem 3.
\begin{equation*}
\left(s{\cal H}''\right)^{2}+4\left({\cal H}'\right)^{2}\left(s{\cal H}'-{\cal H}\right)-\left(\alpha{{\cal H}'}+\frac{1}{2}\right)^{2}=0.
\end{equation*}
\par
The kernel (\ref{1bb1}) has continuity property, a feature of the integrable kernels, since it can be written as,
\begin{equation*}
\lim_{n \rightarrow \infty}\frac{1}{2n}\mathbb{K}_{n}\left(\frac{\zeta}{4n^{2}},\frac{\zeta^{*}}{4n^{2}}\right)=\frac{\sum_{j=0}^{1}f_{j}(\zeta)g_{j}(\zeta^{*})}{\zeta-\zeta^{*}},
\end{equation*}
where
\begin{equation*}
f_{j}(\zeta)=(-1)^{2-j}\frac{\zeta^{\frac{s{\cal H}''}{2{\cal H}'}}}{2\sqrt{s{\cal H}'}}\left(\frac{\zeta^{2}}{\zeta-1}\frac{d}{d\zeta}\right)^{j}\phi(\zeta), \quad g_{j}(\zeta^{*})=\frac{{\zeta}^{\frac{s{\cal H}''}{2{\cal H}'}}}{2\sqrt{s{\cal H}'}}\left(\frac{{\zeta}^{2}}{\zeta-1}\frac{d}{d\zeta}\right)^{1-j}\phi(\zeta),
\end{equation*}
and $j\in\{0,1\},$ with the property $\sum_{j=0}^{1}f_{j}(\zeta)g_{j}(\zeta)=0$.
\par
Using the continuity property, the limiting kernel (\ref{401a0}) becomes,
\begin{align*}
&\lim_{n \rightarrow \infty}\frac{1}{2n}\mathbb{K}_{n}\left(\frac{\zeta}{4n^{2}},\frac{\zeta}{4n^{2}}\right)\nonumber\\
&=\frac{\zeta^{2}\phi'(\zeta)-\zeta^{2}\phi(\zeta)\phi''(\zeta)-2\zeta\phi(\zeta)\phi'(\zeta)}{\zeta-4s{\cal H}'(s)}+\frac{\zeta^{2}\phi(\zeta)\phi'(\zeta)-2s^{2}{\cal H}''(s)\phi^{2}(\zeta)}{(\zeta-4s{\cal H}'(s))^{2}},\\
&=\frac{\zeta^{2}\left(\phi'(\zeta)\right)^{2}+\left(\frac{\zeta}{4}-\frac{\alpha^{4}}{4}-\frac{s^{2}}{\zeta^{2}}-\frac{\alpha{s}}{\zeta}-{\cal H}\right)\phi^{2}(\zeta)}{\zeta-4s{\cal H}'}, \quad s>0,\\
&=\frac{\zeta^{2}\left(\phi'(\zeta)\right)^{2}+\left(\frac{\zeta}{4}-\frac{\alpha^{2}}{4}\right)\phi^{2}(\zeta)}{\zeta},\quad s=0,\\
&=\lambda\frac{J_{\alpha}^{2}(\sqrt{\zeta})-J_{\alpha-1}\left(\sqrt{\zeta}\right)J_{\alpha+1}\left(\sqrt{\zeta}\right)}{4},
\end{align*}
where $J_{\alpha}(z)$ is the Bessel function of order $\alpha$, $\lambda$ is a parameter. The first equality above, is found by  applying L'Hospital rule to $(\ref{401a0})$, the second equality is achieved by eliminating $\phi''(\zeta)$ with the aid of $(\ref{4a0a138})$ and ${\cal H}(0,\alpha,\beta)=0$. The Bessel equation satisfied by $\phi(\zeta)$ with the boundary condition $\phi(0)=0$ has regular solution $\phi(\zeta)=\lambda\:J_{\alpha}(\sqrt{\zeta})$ with a parameter of ${\lambda}.$

\section{Limiting Kernel with the perturbed Laguerre weight.}
In this section, we make use of the method in the previous section, and apply to the kernel generated by the singularly perturbed Laguerre weight. It is  interesting  that the
 limiting kernel via from the perturbed Laguerre weight is the same with the limiting kernel that arises from the Pollaczek Jacobi type weight, although their scaling schemes are quite different.
\par
First, the monic orthogonal polynomials $P_{n}(x;t,\alpha)$ with respect to the singular perturbed Laguerre weight,
\begin{equation*}
w(x;t,\alpha)=x^{\alpha}e^{-x}e^{-\frac{t}{x}}, \quad 0 \leq x <\infty, \quad \alpha>0, \quad t>0,
\end{equation*}
satisfy the ladder operator relations have been derived in $\cite{ChenIts12010}$, which we restate here,
\begin{equation*}
\left(\frac{d}{dx}+B_{n}(x)\right)P_{n}(x;t,\alpha)=\beta_{n}A_{n}(x)P_{n-1}(x;t,\alpha),
\end{equation*}
\begin{equation*}
\left(\frac{d}{dx}-B_{n}(x)-{\rm v'}(x)\right)P_{n-1}(x;t,\alpha)=-A_{n-1}(x)P_{n}(x;t,\alpha),
\end{equation*}
where ${\rm v}(x)=-\ln{w(x;t,\alpha)}$. The coefficients  $A_{n}(x),$ $B_{n}(x)$ are given by $(2.7)$ and $(2.8)$ in $\cite{ChenIts12010}$,
\begin{equation*}
A_{n}(x)=\frac{1}{x}+\frac{R_{n}}{x^{2}}, \quad B_{n}(x)=-\frac{n}{x}+\frac{r_{n}}{x^{2}}.
\end{equation*}
We use $R_{n}(t),$ $r_{n}(t)$ and $t$ instead of $a_{n}(s)$, $b_{n}(s)$ and $s$ respectively in $\cite{ChenIts12010}$; these are defined by,
\begin{equation*}
R_{n}(t):=\frac{t}{h_{n}}\int_{0}^{\infty}\frac{1}{y}P_{n}^{2}(y;t,\alpha)w(y;t,\alpha)dy, \quad r_{n}(t):=\frac{t}{h_{n-1}}\int_{0}^{\infty}\frac{1}{y}P_{n}(y;t,\alpha)P_{n-1}(y;t,\alpha)w(y)dy,
\end{equation*}
depend on $t$ and $\alpha$, with the initial conditions $R_{n}(0)=0,$ $r_{n}(0)=0$.
\par
The monic orthogonal polynomials $P_{n}(x;t,\alpha)$ satisfy ladder operator relations in $t$, see $(5.56)$ and $(5.57)$ in  $\cite{ChenIts12010},$
\begin{equation*}
\left(\frac{d}{dt}-\frac{r_{n}}{xt}\right)P_{n}(x; t, \alpha)=-\frac{\beta_{n}R_{n}}{xt}P_{n-1}(x; t, \alpha),
\end{equation*}
\begin{equation*}
\left(\frac{d}{dt}-\frac{1}{x}+\frac{r_{n}}{xt}+\frac{R_{n-1}}{t}\right)P_{n-1}(x; t, \alpha )=\frac{R_{n-1}}{xt}P_{n}(x; t, \alpha).
\end{equation*}
\par
The reproducing Kernel with respect to $w(x;t,\alpha)$ is given by,
\begin{equation}\label{a138}
\mathbf{K}_{n}(x,y):=\left(\frac{h_{n}}{h_{n-1}}\right)^{\frac{1}{2}}\frac{\varphi_{n}(x)\varphi_{n-1}(y)-\varphi_{n}(y)\varphi_{n-1}(x)}{x-y},
\end{equation}
where,
\begin{align}
&\varphi_{n}(x):=\frac{P_{n}(x;t,\alpha)}{\sqrt{h_{n}}}w^{\frac{1}{2}}(x)=\frac{P_{n}(x;t,\alpha)}{\sqrt{h_{n}}}x^{\frac{\alpha}{2}}e^{-\frac{x}{2}-\frac{t}{2x}},\nonumber\\
&\varphi_{n-1}(x):=\frac{P_{n-1}(x;t,\alpha)}{\sqrt{h_{n-1}}}w^{\frac{1}{2}}(x)=\frac{P_{n-1}(x;t,\alpha)}{\sqrt{h_{n-1}}}x^{\frac{\alpha}{2}}e^{-\frac{x}{2}-\frac{t}{2x}},\nonumber
\end{align}
where $h_{n}$ is the square of the $L^{2}$ norm 
and  $\beta_{n}=h_{n}/h_{n-1}$.

Let $\symbol{'010}(x)$
\begin{equation*}
\symbol{'010}:=\begin{bmatrix}
\varphi_{n}(x)\\
\varphi_{n-1}(x)
\end{bmatrix}
=\begin{bmatrix}
\frac{P_{n}(x;t,\alpha)}{\sqrt{h_{n}}}x^{\frac{\alpha}{2}}e^{-\frac{x}{2}-\frac{t}{2x}}\\
\frac{P_{n-1}(x;t,\alpha)}{\sqrt{h_{n-1}}}x^{\frac{\alpha}{2}}e^{-\frac{x}{2}-\frac{t}{2x}}
\end{bmatrix}.
\end{equation*}
We find,
\begin{equation}\label{a142}
\symbol{'010}_{x}=\left(E_{0}+\frac{E_{1}}{x}+\frac{E_{2}}{x^{2}}\right)\symbol{'010},
\end{equation}
where $\symbol{'010}_{x}$ denotes $\frac{\partial}{\partial{x}}\symbol{'010}$,
\begin{equation*}
E_{0}=\frac{1}{2}\begin{bmatrix}
-1 & 0\\
0  & 1
\end{bmatrix}
,
\qquad
E_{1}=\begin{bmatrix}
n+\frac{\alpha}{2} &  \sqrt{\beta_{n}}\\
-\sqrt{\beta_{n}} & -n-\frac{\alpha}{2}
\end{bmatrix}
,
\qquad
E_{2}=\begin{bmatrix}
\frac{t}{2}-r_{n} & R_{n}\sqrt{\beta_{n}}\\
-R_{n-1}\sqrt{\beta_{n}} & -\frac{t}{2}+r_{n}
\end{bmatrix}.
\end{equation*}
The quantity $\symbol{'010}$ also satisfies,
\begin{equation}\label{a143}
\symbol{'010}_{t}=\left(F_{0}+\frac{F_{1}}{x}\right)\symbol{'010},
\end{equation}
where
\begin{equation*}
F_{0}=\begin{bmatrix}
\frac{R_{n}}{2t} & 0\\
0 & -\frac{R_{n-1}}{2t}
\end{bmatrix}
,
\qquad
F_{1}=\begin{bmatrix}
\frac{r_{n}}{t}-\frac{1}{2} & -\frac{R_{n}\sqrt{\beta_{n}}}{t}\\
\frac{R_{n-1}\sqrt{\beta_{n}}}{t} & \frac{1}{2}-\frac{r_{n}}{t}
\end{bmatrix}¡£
\end{equation*}
The equations (\ref{a142}) and (\ref{a143}) is the Lax pair of this problem.
Un-packing the compatibility condition is satisfied by the Lax pair, 
we find the following set of scalar equations,
\begin{equation}\label{a144}
t\frac{d}{dt}r_{n}=\frac{2(r_{n}^{2}-tr_{n})}{R_{n}}+(2n+1+\alpha)r_{n}-nt, \nonumber
\end{equation}
\begin{equation}\label{a145}
t\frac{d}{dt}R_{n}=R_{n}^{2}+(2n+1+\alpha)R_{n}-t+2r_{n},
\end{equation}
\begin{equation}\label{a146}
t\frac{d}{dt}R_{n-1}=t-2r_{n}-(2n-1+\alpha+R_{n-1})R_{n-1}.
\end{equation}
Note that the above two Riccati equations are the same with $(3.10)$ and $(3.11)$ in \cite{ChenIts12010}, the third equation in the above also can be derived by $(2.16)$ and $(3.10)$ in \cite{ChenIts12010}.
\par
We write $(\ref{a142})$ as a set of scalar equations,
\begin{equation}\label{a136}
\varphi_{n}'(x)=\left(-\frac{1}{2}+\frac{2n+\alpha}{2x}+\frac{t-2r_{n}}{2x^{2}}\right)\varphi_{n}(x)+\left(\frac{1}{x}+\frac{R_{n}}{x^{2}}\right)
\beta_{n}^{\frac{1}{2}}\varphi_{n-1}(x),
\end{equation}
\begin{equation}\label{a137}
\varphi_{n-1}'(x)=-\left(\frac{1}{x}+\frac{R_{n-1}}{x^{2}}\right)\beta_{n}^{\frac{1}{2}}\varphi_{n}(x)-\left(-\frac{1}{2}+\frac{2n+\alpha}{2x}+\frac{t-2r_{n}}{2x^{2}}\right)
\varphi_{n-1}(x).
\end{equation}
If we eliminate $\varphi_{n-1}(x)$ 
The resulting ODE reads,
\begin{align}\label{a140}
&x^{2}\varphi_{n}''(x)+\left(x+\frac{xR_{n}}{x+R_{n}}\right)\varphi_{n}'(x)-\left(\frac{x-R_{n}}{2}+\frac{t\frac{d}{dt}R_{n}-R_{n}}{2(x+R_{n})}\right)\varphi_{n}(x)\nonumber\\
&-\left(\frac{x^{2}}{4}+\frac{\alpha^{2}}{4}+\frac{t^{2}}{4x^{2}}-x\left(n+1+\frac{\alpha}{2}\right)-\frac{t}{2}+\frac{\alpha{t}}{2x}+H_{n}\right)\varphi_{n}(x)=0,
\end{align}
with the aid of $(\ref{a146}),$ and $(2.12),$ $(2.13),$ $(3.21),$ $(3.29)$ in \cite{ChenIts12010}.\\
In order to investigate the limiting behavior of the kernel, we recall the equations of $(2.12),$ $(2.13),$ $(3.21),$ $(3.29)$ in \cite{ChenIts12010}
\begin{equation*}
\quad \beta_{n}(R_{n}+R_{n-1})=-(2n+\alpha)r_{n}+nt,
\end{equation*}
\begin{equation*}
 \quad \beta_{n}R_{n}R_{n-1}=r_{n}^{2}-tr_{n}.
\end{equation*}
\begin{equation*}
\quad r_{n}=tH_{n}', \qquad  \quad \beta_{n}=n^{2}+n\alpha+tH_{n}'-H_{n}.
\end{equation*}
The equations,$(3.28)$ in \cite{ChenIts12010} becomes,
$$
\alpha_{n}=2n+1+\alpha+\frac{2\left(t(H_{n}')^{2}-tH_{n}'\right)}{tH_{n}''+n-(2n+\alpha)H_{n}'},
$$
and with $(2.9)$ in \cite{ChenIts12010}, we find
\begin{equation}\label{01b0}
R_{n}=\frac{2\left(t(H_{n}')^{2}-tH_{n}'\right)}{tH_{n}''+n-(2n+\alpha)H_{n}'},
\end{equation}
Let
\begin{equation*}
{\cal H}(s,\alpha):=\lim_{n \rightarrow \infty}H_{n}\left(\frac{s}{2n+1+\alpha},\alpha\right).
\end{equation*}
\\
In the next theorem we describe  the scaled kernel.
\begin{theorem}
Let
\begin{equation}\label{41a}
\xi:=4nx, \quad \xi^{*}:=4ny, \quad s:=(2n+1+\alpha)t, \quad {\rm and} \quad \varphi(\xi):=\lim_{n \rightarrow \infty}\varphi_{n}\left(\frac{\xi}{4n}\right),
\end{equation}
where  $t\rightarrow0^{+},$ $n\rightarrow\infty$ and $\xi,$ $\xi^{*}$ are in compact subsets of $(0,\infty),$ $s$ finite, $\alpha>0,$ then
\begin{align}\label{01a0}
&\lim_{n \rightarrow \infty}\frac{1}{4n}\mathbf{K}_{n}\left(\frac{\xi}{4n},\frac{\xi^{*}}{4n}\right)\nonumber\\
&=\frac{(\xi-4s{\cal H}'(s))\varphi(\xi){\xi^{*}}^{2}\varphi'(\xi^{*})-(\xi^{*}-4s{\cal H}'(s))\varphi(\xi^{*})\xi^{2}\varphi'(\xi)-2s^{2}{\cal H}''(s)(\xi-\xi^{*})\varphi(\xi)\varphi(\xi^{*})}{(\xi-\xi^{*})(\xi-4s{\cal H}'(s))(\xi^{*}-4s{\cal H}'(s))},
\end{align}
and $\varphi(\xi)$ satisfies the second order ODE,
\begin{align}\label{a0a138}
\xi^{2}\varphi''(\xi)+\left(\xi-\frac{4s{\cal H}'(s)\xi}{\xi-4s{\cal H}'(s)}\right)\varphi'(\xi)+\left(\frac{\xi}{4}+\frac{2s^{2}{\cal H}''(s)}
{\xi-4s{\cal H}'(s)}-\frac{\alpha^{2}}{4}-\frac{s^{2}}{\xi^{2}}-\frac{\alpha{s}}{\xi}-{\cal H}(s)\right)\varphi(\xi)=0,
\end{align}
with the boundary conditions $\varphi(0)=0,$
\end{theorem}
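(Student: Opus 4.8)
The plan is to follow, almost verbatim, the argument used to prove Theorem 11, transplanted to the singularly perturbed Laguerre setting with the hard-edge scaling $\xi=4nx$, $\xi^{*}=4ny$, $s=(2n+1+\alpha)t$ of (\ref{41a}). All finite-$n$ ingredients are already available: the Christoffel--Darboux kernel (\ref{a138}), the scalar Lax equation (\ref{a136}) from which $\varphi_{n-1}$ can be solved for in terms of $\varphi_{n}$ and $\varphi_{n}'$, the second order ODE (\ref{a140}) satisfied by $\varphi_{n}(x)$, and the identifications $r_{n}=tH_{n}'$, $\beta_{n}=n^{2}+n\alpha+tH_{n}'-H_{n}$ together with (\ref{01b0}) for $R_{n}$. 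The one genuinely new input is the order counting forced by the present scaling: writing $\tau:=2n+1+\alpha$ and ${\cal H}(s,\alpha)=\lim_{n\to\infty}H_{n}(s/\tau,\alpha)$, the chain rule $s=\tau t$ gives $H_{n}'=\tau{\cal H}'+o(\tau)$ and $H_{n}''=\tau^{2}{\cal H}''+o(\tau^{2})$ in the limit, so that $r_{n}\to s{\cal H}'$ remains bounded while $R_{n}$ now vanishes like $1/n$, in contrast to the $4n^{2}x$ scaling of Section 4. This is precisely why the correct edge variable here is $4nx$ rather than $4n^{2}x$.

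For the ODE I would substitute $x=\xi/(4n)$, $t=s/\tau$ into (\ref{a140}) and pass to the limit term by term, using $\tfrac{d}{dx}=4n\tfrac{d}{d\xi}$. One checks immediately that $x^{2}\tfrac{d^{2}}{dx^{2}}\varphi_{n}\to\xi^{2}\varphi''$, while the first-derivative coefficient $x+\tfrac{xR_{n}}{x+R_{n}}$, which carries an extra factor $4n$ from $\tfrac{d}{dx}$, converges to $\xi-\tfrac{4s{\cal H}'\xi}{\xi-4s{\cal H}'}$ once $R_{n}=-s{\cal H}'/n+O(1/n^{2})$ and $x+R_{n}\sim(\xi-4s{\cal H}')/(4n)$ are inserted. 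The curvature term is similar: $t\tfrac{d}{dt}R_{n}-R_{n}\sim-s^{2}{\cal H}''/n$ yields $\tfrac{2s^{2}{\cal H}''}{\xi-4s{\cal H}'}$ in the $\varphi$-coefficient. The remaining pieces $\tfrac{t^{2}}{4x^{2}}\to s^{2}/\xi^{2}$, $\tfrac{\alpha t}{2x}\to\alpha s/\xi$, $x(n+1+\tfrac{\alpha}{2})\to\xi/4$ and $H_{n}\to{\cal H}$ then assemble into the constant term of (\ref{a0a138}). The boundary condition $\varphi(0)=0$ follows from $\varphi_{n}=P_{n}w^{1/2}/\sqrt{h_{n}}$ together with $w^{1/2}\sim x^{\alpha/2}\to0$ at the hard edge when $\alpha>0$.

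For the kernel I would first use (\ref{a136}) to write $\varphi_{n-1}=N/(\beta_{n}^{1/2}M)$, with $M=\tfrac{1}{x}+\tfrac{R_{n}}{x^{2}}$ and $N=\varphi_{n}'+\bigl(\tfrac12-\tfrac{2n+\alpha}{2x}-\tfrac{t-2r_{n}}{2x^{2}}\bigr)\varphi_{n}$, and insert this into (\ref{a138}); since $\sqrt{h_{n}/h_{n-1}}=\sqrt{\beta_{n}}$ the factors of $\beta_{n}^{1/2}$ cancel and leave $\mathbf{K}_{n}=(x-y)^{-1}\bigl[\varphi_{n}(x)N(y)/M(y)-\varphi_{n}(y)N(x)/M(x)\bigr]$. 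Expanding $N/M$ in powers of $n$ under the scaling gives $N(x)/M(x)=-n\varphi(\xi)+G(\xi)+O(1/n)$, where $G(\xi)=\tfrac{\xi^{2}\varphi'(\xi)}{\xi-4s{\cal H}'}-\tfrac{\alpha}{2}\varphi(\xi)-\tfrac{2s^{2}{\cal H}''\varphi(\xi)}{\xi-4s{\cal H}'}$. The decisive point is that the divergent $O(n)$ part $-n\varphi(\xi)$ enters symmetrically and therefore cancels \emph{exactly} in the antisymmetric Christoffel--Darboux combination, as does the $-\tfrac{\alpha}{2}\varphi$ piece of $G$; multiplying by $\tfrac{1}{4n}$ and using $x-y=(\xi-\xi^{*})/(4n)$ then reproduces (\ref{01a0}) precisely, the $2s^{2}{\cal H}''$ numerator arising from combining the two $-\tfrac{2s^{2}{\cal H}''\varphi}{\xi-4s{\cal H}'}$ contributions over the common denominator $(\xi-4s{\cal H}')(\xi^{*}-4s{\cal H}')$.

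The main obstacle is the asymptotic analysis of $R_{n}$. Because $R_{n}=O(1/n)$ while both (\ref{a0a138}) and the $2s^{2}{\cal H}''$ numerator of (\ref{01a0}) depend on its subleading behaviour, one must expand (\ref{01b0}) to order $1/n^{2}$, carefully distinguishing $\tau=2n+1+\alpha$ from $2n$, since that difference contributes at the same order as the surviving $O(1)$ terms. Establishing that the leading $O(n)$ divergence cancels identically, rather than merely to leading order, is what guarantees a finite limit; this is cleanest to see from the exact antisymmetry of the Christoffel--Darboux form, which also ensures that the $O(1/n)$ corrections to the scaled wavefunction $\varphi_{n}(\xi/4n)$ never enter the limiting kernel.
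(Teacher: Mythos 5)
Your proposal is correct and follows essentially the same route as the paper's proof: solve (\ref{a136}) for $\sqrt{\beta_n}\,\varphi_{n-1}$ in terms of $\varphi_n$ and $\varphi_n'$, insert into the Christoffel--Darboux kernel (\ref{a138}), substitute the scaling into the finite-$n$ ODE (\ref{a140}), and pass to the limit using $r_n\sim s{\cal H}'$, $R_n\sim -s{\cal H}'/n$, $t\frac{d}{dt}R_n-R_n\sim -s^2{\cal H}''/n$. Your write-up is in fact more careful than the paper's terse argument, making explicit the exact antisymmetric cancellation of the $O(n)$ term and the subleading order counting that produces the $2s^2{\cal H}''$ contribution.
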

\begin{proof}
By (\ref{a136}), $\varphi_{n-1}(x)$ can be represented by $\varphi_{n}(x)$ and its derivative as,
$$
\varphi_{n-1}(x)=\frac{1}{(x+R_{n})\sqrt{\beta_{n}}}\left(x^{2}\varphi_{n}'x)-\left(-\frac{1}{2}x^{2}+\left(n+\frac{\alpha}{2}\right)x+\frac{t}{2}-r_{n}\right)
\varphi_{n}(x)\right).
$$
Inserting the above equation and (\ref{41a}) into the kernel given by (\ref{a138}) 
, let $n\rightarrow\infty$, then we arrive at (\ref{01a0}) and (\ref{a0a138}).
\par
In deriving these results, a number asymptotic relations, are listed here. 
$$
r_{n}=tH_{n}'(t)\sim s{\cal H}'(s), \quad R_{n} \sim -\frac{s{\cal H}'(s)}{n}, \quad t\frac{d}{dt}R_{n} \sim -\frac{s{\cal H}'(s)}{n}-\frac{s^{2}{\cal H}''(s)}{n},
$$
where $t\rightarrow0,$ $n \rightarrow \infty$ and $s=(2n+1+\alpha)t$ is fixed.
\end{proof}
 Hence, we see that the scaling limit of the logarithmic derivative of the Hankel determinant $H_{n}(t,\alpha)$ generated by the perturbed Laguerre weight is the same with the scaling limit of logarithmic derivative of the Hankel determinant generated by the Pollaczek-Jacobi type kernel. 
 Moreover, the limiting kernel $(\ref{01a0})$ is completely characterized by the second order linear ODE of $(\ref{a0a138}),$ which is the same with the limiting Pollaczek-Jacobi type kernel in the Theorem 13, both of which are ``scaled at the origin" but their scaling scheme are different from each other.
\\
We present here  another version of the  ODE $(\ref{a0a138}).$\\
For $s>0$, we make the transformation,
\begin{equation*}
\xi \rightarrow 4s{\cal H}'\xi,
\end{equation*}
where ${\cal H}'$ denotes $d{\cal H}(s)/ds$, into $(\ref{a0a138});$ we see that $\varphi(\xi)$ satisfies,
\begin{equation*}
\varphi''(\xi)+\left(\frac{2}{\xi}-\frac{1}{\xi-1}\right)\varphi'(\xi)+\left(-\frac{a_{4}}{\xi^{4}}-\frac{a_{3}}{\xi^{3}}+\frac{a_{2}}{\xi^{2}}
+\frac{a_{1}}{\xi}+\frac{a_{0}}{\xi-1}\right)\varphi(\xi)=0,
\end{equation*}
where,
\begin{equation*}
a_{4}=\frac{1}{16{\cal H}'^{2}}, \quad a_{3}=\frac{\alpha}{4{\cal H}'}, \quad a_{2}=-{\cal H}-\frac{\alpha^{2}}{4}-\frac{s{\cal H}''}{2{\cal H}'}, \quad a_{1}=s{\cal H}'-\frac{s{\cal H}''}{2{\cal H}'}, \quad a_{0}=\frac{s{\cal H}''}{2{\cal H}'},
\end{equation*}
The coefficients of $a_{4}$, $a_{3}$, $a_{2}$, $a_{1}$ and $a_{0}$ satisfies the relation,
\begin{equation*}
a_{0}^{2}+2a_{0}+a_{1}+a_{2}-a_{3}-a_{4}=0,
\end{equation*}
which is the $\sigma$-form of the Painlev\'{e} equation,
\begin{equation*}
\left(s{\cal H''}\right)^{2}+4\left({\cal H'}\right)^{2}\left(s{\cal H'}-{\cal H}\right)-\left(\alpha{{\cal H'}}+\frac{1}{2}\right)^{2}=0.
\end{equation*}
\\
We consider an interesting special case, where $\alpha=0$ The ODE becomes
\begin{equation*}
\varphi''(\xi)+\left(\frac{2}{\xi}-\frac{1}{\xi+2s^{\frac{2}{3}}}\right)\varphi'(\xi)+\left(-\frac{s^{2}}{\xi^{4}}
+\frac{27s^{\frac{2}{3}}+5}{36\xi^{2}}+\frac{3s^{\frac{2}{3}}-1}{12s^{\frac{2}{3}}\xi}+\frac{1}{12({\xi}s^{\frac{2}{3}}+2s^{\frac{4}{3}})}\right)\varphi(\xi)=0,
\end{equation*}
with the boundary condition of $\varphi(0)=0.$ The solution of the $\sigma$-from Painlev\'{e} equation, in this situation, reads,
\begin{equation*}
{\cal H}(s)=-\frac{3}{4}s^{\frac{2}{3}}+\frac{1}{36}.
\end{equation*}


{\bf The region $\xi<<1,$ and $s>>1.$} The approximating equation is
\begin{equation*}
\psi"(\xi)+\frac{2}{\xi}\:\psi'(\xi)-\frac{s^{2}}{\xi^{4}}\psi(\xi)=0,
\end{equation*}
with boundary condition $\psi(0)=0$. The solution, up to a constant multiplier, is given by
\begin{equation*}
e^{-\frac{s}{\xi}}.
\end{equation*}
{\bf The region $\xi>>1, {\xi}s^{-\frac{2}{3}}>>1.$} The approximating equation is
\begin{equation*}
\psi''(\xi)+\frac{1}{\xi}\psi'(\xi)+\left(\frac{3s^{\frac{2}{3}}-1}{12s^{\frac{2}{3}}\xi}
+\frac{1}{12s^{\frac{2}{3}}(\xi+2s^{\frac{2}{3}})}\right)\psi(\xi)=0,
\end{equation*}
we find that, $\varphi(\xi)$ is asymptotic to
$$
A\;(\xi+2s^{\frac{2}{3}}){\rm HeunC}\left(0,1,0,\frac{1}{2}s^{\frac{2}{3}},\frac{1}{3},\frac{\xi}{2}s^{-\frac{2}{3}}+1\right),
$$
where $A$ is an arbitrary constant.
\\

{\bf Acknowledgement.}
\par
We would like to thank the Macau Science and Technology Development Fund for generous support: FDCT 077/2012/A3, and the National Science Foundation of China (ProjectNo.11271079), Doctoral Programs Foundation of the Ministry of Education of China.
\\

{ {\bf Appendix A}}
\par
We list here a selection of integral formulas that are relevant for the computations in the main text. These can also be found in \cite{YM2012} and \cite{YHM2013}.
For $0<a<b,$ we have,
$$
\int_{a}^{b}\frac{1}{\sqrt{(b-x)(x-a)}}dx=\pi. \eqno{(A1)}
$$

$$
\int_{a}^{b}\frac{x}{\sqrt{(b-x)(x-a)}}dx=\frac{(a+b)\pi}{2}. \eqno{(A2)}
$$

$$
\int_{a}^{b}\frac{1}{x\sqrt{(b-x)(x-a)}}dx=\frac{\pi}{\sqrt{ab}}. \eqno{(A3)}
$$

$$
\int_{a}^{b}\frac{1}{x^2\sqrt{(b-x)(x-a)}}dx=\frac{(a+b)\pi}{2(ab)^{\frac{3}{2}}}. \eqno{(A4)}
$$

$$
\int_{a}^{b}\frac{\ln{x}}{\sqrt{(b-x)(x-a)}}dx=2\pi\ln\left(\frac{\sqrt{a}+\sqrt{b}}{2}\right). \eqno{(A5)}
$$

$$
\int_{a}^{b}\frac{\ln{x}}{x\sqrt{(b-x)(x-a)}}dx=\frac{2\pi}{\sqrt{ab}}\ln\frac{2\sqrt{ab}}{\sqrt{a}+\sqrt{b}}. \eqno{(A6)}
$$
\\
\\

{ {\bf Appendix B}}
\par
We list here a number of identities involving $R_n,\:R_n*,\;r_n,\;r_n*,\bt_n,\;H_n$ which can be found in \cite{CD2010},
\begin{equation*}
R_{n}^{*}=R_{n}-(2n+1+\alpha+\beta),
\end{equation*}

\begin{equation*}
(r_{n}^{*})^{2}-tr_{n}^{*}=\beta_{n}R_{n}^{*}R_{n-1}^{*},
\end{equation*}

\begin{equation*}
 r_{n}^{2}+\beta{r_{n}}=\beta_{n}R_{n}R_{n-1},
\end{equation*}

\begin{equation*}
\beta_{n}=\frac{-(r_{n}^{*}-r_{n})^{2}-(\beta+t)r_{n}+(t-\alpha-2n)r_{n}^{*}+nt}{1-(2n+\alpha+\beta)^{2}},
\end{equation*}

\begin{equation*}
r_{n}^{*}=\frac{nt+tH_{n}'}{2n+\alpha+\beta},
\end{equation*}

\begin{equation*}
 r_{n}=\frac{n(n+\alpha)+tH_{n}'-H_{n}}{2n+\alpha+\beta},
\end{equation*}

\begin{equation*}
R_{n}=\frac{(2n+1+\alpha+\beta)[2r_{n}^{2}+(t+2\beta-2r_{n}^{*})r_{n}+(2n+\alpha)r_{n}^{*}-nt-tr_{n}']}{2[(r_{n}^{*}-r_{n})^{2}+(2n+\alpha-t)r_{n}^{*}+(\beta+t)r_{n}-nt]}.
\end{equation*}
The expressions of these for large $n$, are given below,
$$
r_{n}^{*}(t)=\frac{s{\cal H}'}{2n}+\frac{s-(\alpha+\beta)s{\cal H}'}{n^{2}}+\mathcal{O}(\frac{1}{n^3}),
$$

$$
r_{n}=\frac{n}{2}+\frac{\alpha-\beta}{4}+\frac{\beta^{2}-\alpha^{2}-4({\cal H}-s{\cal H})}{8n}+\frac{(\alpha+\beta)((\alpha^{2}-\beta^{2})+4({\cal H}-s{\cal H}))}{16n^{2}}+\mathcal{O}(\frac{1}{n^3}),
$$

$$
R_{n}=2n+1+\alpha+\beta-\frac{2s{\cal H}'}{n}+\frac{(\alpha+\beta-1)s{\cal H}'-2s^{2}{\cal H}''}{n^{2}}+\mathcal{O}(\frac{1}{n^3}),
$$

$$
R_{n}^{*}=-\frac{2s{\cal H}'}{n}+\frac{(\alpha+\beta-1)s{\cal H}'-2s^{2}{\cal H}''}{n^{2}}+\mathcal{O}(\frac{1}{n^3}),
$$
where ${\cal H}$ is given by
$$
{\cal H}(s,\alpha,\beta):=\lim_{n \rightarrow \infty}H_{n}\left(\frac{s}{2n^{2}},\alpha,\beta\right),
$$
${\cal H}'$ denotes $d{\cal H}(s)/ds$ and $s=2n^{2}t$.

\end{document}